\newtheorem{theorem}{Theorem}[section]
\newtheorem{lemma}{Lemma}[section]
\newtheorem{corollary}{Corollary}[section]
\newtheorem{remark}{Remark}[section]
\numberwithin{equation}{section}
\newtheorem{proposition}{Proposition}[section]
\begin{document}

\title[Two-phase flow  in  thin cylindric porous media]
 {Muskat-Leverett two-phase flow in thin cylindric porous media: Asymptotic approach}

\author[T. Mel'nyk \& C. Rohde]{ Taras Mel'nyk$^{\flat}$ \ \& \ Christian Rohde$^\natural$}
\address{\hskip-12pt
$^\flat$ Institute of Applied Analysis and Numerical Simulation,
the University of Stuttgart\\
Pfaffenwaldring 57,\ 70569 Stuttgart,  \ Germany
 }
\email{Taras.Melnyk@mathematik.uni-stuttgart.de}

\address{\hskip-12pt  $^\natural$ Institute of Applied Analysis and Numerical Simulation,
the University of Stuttgart\\
Pfaffenwaldring 57,\ 70569 Stuttgart,  \ Germany
}
\email{christian.rohde@mathematik.uni-stuttgart.de }

\begin{abstract}
A reduced-dimensional asymptotic modelling approach is presented for the analysis of two-phase flow in a thin cylinder with  aperture of order $\mathcal{O}(\varepsilon),$ where $\varepsilon$ is a small positive parameter.
  We consider a nonlinear Muskat-Leverett two-phase flow model expressed in terms of a fractional flow formulation and Darcy's law  with a saturation and the reduced pressure as unknown.  We assume that the capillary pressure is non-singular and neglect the acceleration of gravity in  Darcy's law.   Given flows seep through the lateral surface of the cylinder. This exchange process leads to a non-homogeneous Neumann boundary condition with an intensity factor $\varepsilon^\alpha$ $(\alpha \ge 1)$  which controls  the mass transport.
  Furthermore, the absolute permeability tensor comprises an intensity coefficient $\varepsilon^\beta,$ $\beta \in \Bbb R,$ in the transversal directions. The asymptotic behaviour of the solution is studied as $\varepsilon \to 0,$  i.e.
when the thin cylinder shrinks into an interval.
Two qualitatively distinct cases are  discovered in the asymptotic behavior
 of the solution: $\alpha =1 \ \text{and} \ \beta <2,$ and $\alpha> \beta -1 \ \text{and} \ \alpha >1.$
In each of these cases, the asymptotic approximations are constructed for the pressures, saturations and velocities of these flows,  and the corresponding asymptotic estimates are proved in various norms, including  energy and uniform pointwise norms. Depending on the values of the parameters $\alpha$ and $\beta,$ the first terms of the asymptotics are solutions to the corresponding nonlinear elliptic-parabolic system of two differential equations, which is a one-dimensional model of the Muskat-Leverett two-phase flow.
\end{abstract}

\subjclass{Primary 35B25, 76S05; Secondary T6T06, 74K05, 35M33}

\keywords{two-phase flow, Muskat-Leverett theory, thin domains, asymptotic approximations}

\maketitle
\tableofcontents

\section{Introduction}\label{Sec:Introduction}

The understanding of the  effective  dynamics  of a multiphasic fluid  in a porous media, which may consist of, for example, gas, various liquids, and sometimes solids,  is of primary importance in a number of fields, including the oil and gas industry, geothermal energy, environmental engineering, soil science and chemical engineering (see e.g. \cite{Bear-1972,Helmig-1997,Sahimi-2023}). It is therefore essential to identify and comprehend the principal parameters and their impact on flow dynamics in porous media, as this has important implications for the optimization of engineering processes.

It is often the case that porous media comprise a multitude of heterogeneous fractures,  which, in turn, exhibit further characteristics that render them as porous media \cite{Adler-Tho-Mour-2013,Bear-2012}. In comparison to their length, the width dimension of a fracture is typically relatively narrow, spanning a range from micrometers to centimeters. In contrast, the length of a fracture can extend to tens of meters.

This article examines a nonlinear Muskat-Leverett two-phase flow model, which describes the flow of two immiscible and incompressible fluids on the Darcy scale, taking into account the effect of capillary pressure at the interfaces between them. We consider this model in a thin cylinder, designated as $\Omega_\varepsilon,$ as an example of a simple fracture.  Here, the small parameter $\varepsilon$ is used to characterise the dimensionless cross-sectional diameter of the cylinder $\Omega_\varepsilon \subset \Bbb R^3$ with the longitudinal variable $x_1 \in (0, \ell)$ and transversal variables  $\overline{x}_1=(x_2, x_3).$

One of the most significant attributes of flow dynamics in porous media is absolute permeability, which is a quantitative characteristic of the permeability of the medium in accordance with the configuration of the solid matrix, or in other words, the pore structure.  In most cases, it is given in the form of a tensor, since the solid matrix depends on the specific direction. In our model, the  absolute permeability tensor is given by the matrix
\begin{equation}\label{permeability}
 \mathbb{K}  =
\left(
\begin{matrix}
 k_1(x_1) & 0 & 0
  \\
  0 & \varepsilon^\beta  k_{22}(x_1, \frac{\overline{x}_1}{\varepsilon}) & \varepsilon^\beta k_{23}(x_1, \frac{\overline{x}_1}{\varepsilon})
  \\[2pt]
  0 &  \varepsilon^\beta k_{32}(x_1, \frac{\overline{x}_1}{\varepsilon}) & \varepsilon^\beta  k_{33}(x_1, \frac{\overline{x}_1}{\varepsilon})
\end{matrix}
\right),
\end{equation}
where $\beta $ is a real number.

After  nondimensionalisation, the governing equations typically exhibit a range of coefficients, both relatively small and large in value. By fixing one parameter and comparing it with other coefficients, it is possible to identify intensity multipliers near certain coefficients.
In the context of our problem, one such intensity factor is represented by the term $\varepsilon^\beta$ in the absolute permeability tensor~ \eqref{permeability}.  The other one, $\varepsilon^\alpha,$ is present in the boundary conditions
$$
 \vec{V}_{w} \boldsymbol{\cdot} {\boldsymbol{\nu}}_\varepsilon = \varepsilon^{\alpha} \, b\, Q(x_1, \tfrac{\overline{x}_1}{\varepsilon}) \quad \text{and} \quad
 \vec{V}_{o} \boldsymbol{\cdot} {\boldsymbol{\nu}}_\varepsilon = \varepsilon^{\alpha} \, \big(1 - b\big)\,
 Q(x_1, \tfrac{\overline{x}_1}{\varepsilon})
$$
on the lateral surface of the cylinder $\Omega_\varepsilon,$  which  connect the flows in the cylinder with the flows in the surrounding environment.
Here $\vec{V}_{w}$ and $\vec{V}_{o}$ are the unknown velocities of two components of the flow, ${\boldsymbol{\nu}}_\varepsilon$ is the outward unit normal to the lateral surface,  $b$ is a fractional flow function of the $w$-phase, and $Q$ is the specified flow rate (injection/extraction) of the mixture.  A rigorous statement of the problem,  written in a fractional flow formulation, i.e. in terms of a saturation and a reduced (global) pressure, as well as the main assumptions are given in Section~\ref{Sec:Statement}, see in particular \eqref{Ell}-\eqref{initial-cond}.

The aim of this work is to study the asymptotic behaviour of the solution as  the small parameter $\varepsilon$ approaches zero, i.e. as the thin cylinder $\Omega_\varepsilon$ shrinks into an interval.
Additionally, the influence of the parameters $\alpha$ and $\beta$ in the intensity factors on fluid dynamics will be identified.
This will facilitate the development of appropriate one-dimensional models that take into account the different regimes in the original problem.

To achieve this goal, we use the asymptotic multiscale approximation methodology, which is a highly effective tool for studying a wide range of perturbed problems. This approach permits the investigation of the impact of additional model characteristics (or parameters) through the incorporation of higher-order terms into the approximation. For this, it is necessary to correctly choose the asymptotic scale, which is affected by various parameters of the problem under consideration.

Two qualitative cases of the asymptotic behaviour of the solution depending on the values of the parameters $\alpha$ and $\beta$ are identified and studied,  namely (see Fig.~\ref{Figure})
$$
 (1) \  \alpha =1 \ \text{and} \ \beta <2,  \qquad (2) \ \alpha> \beta -1 \ \text{and} \ \alpha >1.
 $$

\begin{figure}[htbp]\label{Figure}
\centering
\includegraphics[width=8cm]{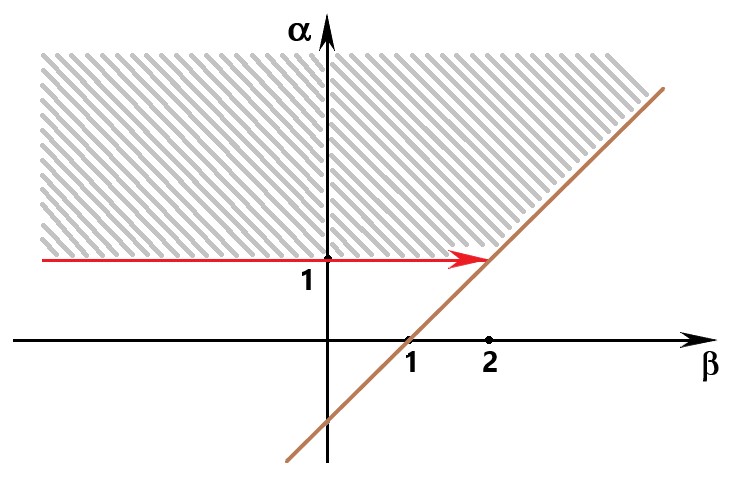}
\caption{The first case corresponds to the  red ray, and the second case to the gray shaded area.}\label{f1}
\end{figure}

In each case, several first terms of the asymptotics are determined, and estimates are provided for the difference between the solution and the corresponding approximation function in various norms, including both energy and uniform  pointwise norms. The first case is considered in Sections~\ref{Sec: 3} and \ref{Sect:4}, while the second one is studied in Section~\ref{Sect:5}. The results obtained in these sections are then used to construct asymptotic approximations for the corresponding pressures and velocities $\vec{V}_{w}$ and $\vec{V}_{o}$
 in Section~\ref{Sect-6}.

It is crucial to highlight the significance of error estimates and convergence rates in substantiating the adequacy of one-dimensional models designed to describe real processes in three-dimensional thin bodies in applied problems.
In our context, these estimates provide compelling evidence of the influence of the intensity factors $\varepsilon^{\alpha}$ and $\varepsilon^{\beta}$. For instance, in the first case, the influence of the parameter $\alpha$ is already observed in the first term of the asymptotics (see the nonlinear limit problem \eqref{limit_prob}, in which the average function of the function $Q$ is present within the differential equations), whereas the impact of the parameter~$\beta$ is discernible in the subsequent term (see e.g. \eqref{Anz-P-be}). In the second case, the influence manifests itself only in the second and the third terms of the asymptotics, respectively (see \eqref{Anz-P-al-be} and \eqref{Anz-S-al-be}).

In this regard, it should be noted that the results of convergence alone used for the analysis of perturbed problems may ultimately prove inadequate and insufficient.
This is due to the fact that, in the limit, a lot of information and detail pertaining to the original problem are lost.
These features can be taken into account only by constructing subsequent terms of the asymptotics.

In the final Section~\ref{Sect: Concl}, we analyze in more detail the results obtained, in particular the role of the parameters $\alpha$ and $\beta.$  As this is the pioneering work to study the influence of such diverse parameters on the two-phase flow dynamics through an asymptotic approach, we have simplified the problem formulation (omitted the acceleration of gravity in the Darcy's law, considered the problem within a rectilinear cylinder and made additional assumptions, particularly regarding the non-singularity of the capillary pressure; see Sect.~\ref{Sec:Statement}). It should be noted here that the Buckley-Leverett model, studied in many works, further simplifies the problem by neglecting capillary pressure and gravitational forces.
Consequently, in Sect.~\ref{Sect: Concl}, these constraints are also discussed and  avenues for future research are presented.

\smallskip

{\bf Literature review.}
The mathematical modelling of the flow of two incompressible and immiscible fluids in porous media has been the subject of intense study over the past several decades. Of particular interest are the existence and uniqueness of solutions of such models due to  permeability degeneracy (see Sect.~\ref{Sec:Statement} for further details).

A number of models for the interaction of single-phase flows in fractured porous media and dimensionally reduced models are described in \cite{Sch-Fle-Hel-2015}. In their  work, the authors put forward a new dimensionally reduced single-phase flow model in fractured porous media, which introduces the simulation of co-dimension one fracture crossings.

The modelling and numerical simulation of two-phase flow in fractured porous media represents a significant area of interest, with a notable trend of ongoing development over recent years \cite{Agh-Dre-Mas-Tre-2021,Bre-Hen-Mas-Sam-2018,Bur-Rohde-2022,Gla-Hel-Fle-Hol-2017,Fum-Sco-2013}.
A fairly complete review on this topic has been presented in \cite{Bur-Rohde-2022}.

Additionally, we cite several publications that employ diverse asymptotic techniques. In \cite{Mor-Show-2012}, convergence results (as $\varepsilon \to 0)$ are obtained for single-phase flow in a porous medium in a region containing a narrow channel with width $\mathcal{O}(\varepsilon)$ and high permeability $\mathcal{O}(\varepsilon^{-1}).$
In examining the convergence results for Richards' equation involved in the fractured porous media model in \cite{Pop-Radu-Kundan-2020}, five distinct reduced problems were identified, depending on the ratio of the
porosities and of the absolute hydraulic conductivities in the fracture and the matrix blocks. In \cite{Dug-Kum-2022}, the authors employ a formal asymptotic approach to present a number of reduced problems related to two-phase flow in a porous medium containing a single fracture. Some of these results have been confirmed by means of numerical examples. The homogenization of two-phase flows in periodic porous media in different settings has been obtained in \cite{Ama-Jur-Pan-Pia-2021,Bou-Luck-Mik-1996,Hen-Ohl-Sch-2013,Jur-Pan-Vrb-2016,Met-Kna-2021,Pannasenko-Vir-2003}
(see also some relevant links therein).

It is regrettable that there are only a few papers  that conduct an asymptotic study of fluid flows in thin fractured-porous media.
Using the method of asymptotic expansions with respect to the thickness of the thin circular cylinder, and assuming that the fluids are strictly separated at $t = 0$ and that only one phase is in contact with the rigid lateral wall, one-dimensional equations are formally identified in \cite{Mikelich-2003} from the classical Navier-Stokes system for two incompressible immiscible flows separated by a free boundary with small surface tension effects.
A single phase flow in a thin a periodic fractured-porous layer with high contrast permeability (the so-called "dual porosity" regime) was considered in \cite{Ama-Pan-Pia-2007}.
The authors model this process using a linear parabolic equation with a rapidly oscillating high-contrast periodic coefficient in an elliptic operator and considering this equation in a thin rectangle (or thin rectangle parallelepiped) with a homogeneous Neumann boundary condition. The corresponding homogenized problems have been derived and convergence results have been proved when the thin rectangle vanishes in the limit, employing the two-scale convergence method.
To the best of our knowledge, no asymptotic studies have been conducted to investigate two-phase flows in thin fractured porous media.


\section{Problem statement}\label{Sec:Statement}
Let $\varpi \subset \mathbb R^2 $ be a bounded simply connected domain with the smooth boundary that contains the origin.  A thin cylinder
 $\Omega_\varepsilon$  is defined as follows
$$
\Omega_\varepsilon =
  \left\{  x=(x_1, x_2, x_3)\in\mathbb{R}^3 \colon \ 0  <x_1 <\ell, \quad  \varepsilon^{-1} \overline{x}_1 \in \varpi
  \right\},
$$
where  $\varepsilon> 0$  is  a small  parameter,  $\overline{x}_1 =  (x_2, x_3).$ We  denote by
$$
\Gamma_\varepsilon := \partial\Omega_\varepsilon \cap \{ x\colon \ 0<x_1<\ell \}
$$
the  lateral surface of  $\Omega_\varepsilon,$ and by
$
\Upsilon_\varepsilon(y_1) := \Omega_\varepsilon \cap \{  x\colon  \ x_1= y_1\}
$
its cross-section at the point $y_1 \in [0, \ell],$ which  is the domain  $\varepsilon\, \varpi $  of diameter $\mathcal{O}(\varepsilon).$

The  thin cylinder $\Omega_\varepsilon$ is  considered as a porous medium with dimensionless porosity $0< const \le \phi(x) < 1$ and the  absolute permeability tensor \eqref{permeability}
that is the symmetric, uniformly  positive-definite  tensor  in $\overline{\Omega}_\varepsilon$ with smooth coefficients of the class $C^2,$ where $\beta \in \Bbb R.$ This means that $k_{23}= k_{32}$ and there are positive constants $C_0, \chi_1$ and $\chi_2$ such that for all $x_1\in [0,\ell]$ and
$\overline{\xi}_1= (\xi_2, \xi_3) \in \varpi ,$  and for all $\overline{\zeta}_1= (\zeta_2,\zeta_3) \in \Bbb R^2,$ we have
\begin{equation}\label{cercitive}
  C_0 \le k_1(x_1), \qquad  \chi_1 |\overline{\zeta}_1|^2 \le \sum_{i,j=2}^{3} k_{ij}(x_1,\overline{\xi}_1) \, \zeta_i \, \zeta_j \le \chi_2 |\overline{\zeta}_1|^2.
\end{equation}
In order to reduce the technical calculations, we will regard that the function  $\phi$   depends only on the variable~$x_1,$
and additionally assume that $\phi \in C^3([0,\ell]).$

 Two immiscible and incompressible fluids (e.g., water and oil  denoted by  $w$ and $o$ respectively) flow through the cylinder. This type of movement is referred to as a two-phase flow.
The key characteristics of such a flow are the saturation $ S_\flat\colon \overline{\Omega}_\varepsilon\times [0,T]\mapsto [0,1]$ and
the velocity $\vec{V}_\flat \colon \overline{\Omega}_\varepsilon\times [0,T]\mapsto \Bbb R^3$ of the $\flat$-th fluid,
where $\flat\in \{o, w\}.$ The saturation~$S_\flat$ refers to the local proportion of the pore space that is occupied by the $\flat$-th phase and
\begin{equation}\label{saturation}
   S_w + S_o =1, \quad S_\flat \in [0, 1].
\end{equation}

Assuming that the density of each phase is constant, the equation of conservation of mass for each phase reads as follows
\begin{equation}\label{conservation}
  \phi \, \partial_t  S_\flat + \nabla \boldsymbol{\cdot} \vec{V}_\flat =0 \quad \text{in} \ \ \Omega_\varepsilon^T := \Omega_\varepsilon\times (0,T).
\end{equation}

The dynamics of the two-phase porous flow process is characterized by Darcy's law (here, the acceleration of gravity is neglected)
\begin{equation}\label{darcy}
  \vec{V}_\flat = -   \lambda_\flat \, \mathbb{K} \nabla P_{\flat} \quad \text{in} \ \ \Omega_\varepsilon^T ,
\end{equation}
where $\nabla = \text{grad}_x,$
$ P_\flat\colon \Omega_\varepsilon^T \mapsto \Bbb R$ is the corresponding phase pressure, $\lambda_\flat := \frac{k_{r \flat}}{\mu_i}$ is the phase mobility, $k_{r \flat}$ is the relative permeability and
the positive constant $\mu_\flat$ is the dynamic viscosity.

The pressures in the two phases are generally not the same, owing to the surface tensions involved. The relationship between them is given by constitutive relation (see, e.g.,  \cite{Leveret})
\begin{equation}\label{capilarity}
 P_o -  P_w  = p_c(S) \quad (S:= S_w),
\end{equation}
where  $p_c\colon  (0,1)\mapsto \Bbb R$ is the capillary pressure. We assume that $p_c \in C^3([0,1])$  and
\begin{equation}\label{Leveret}
  p_c^\prime(S)  < 0 \quad \text{for} \ \ S\in [0,1].
\end{equation}
The assumption of non-singularity of the capillary pressure is a common one in many works in this field (see \cite{Alt-Benedetto-1985,Ama-Jur-Pan-Pia-2021,Atoncev-book-1990,Bou-Luck-Mik-1996,Dug-Kum-2022}).

The functions $k_{r o}$ and $k_{r w}$ are  empirically defined nonnegative  functions of $S:= S_w \in [0,1]$ (see e.g.~\cite{Kro-Luckhaus-1984}) and we assume that they are smooth of class $C^3$ and satisfy
\begin{gather}\label{permeabilities}
   k_{r w}^\prime(S) > 0, \quad k_{r w}(0) =0,
  \\
    k_{r o}^\prime(S) < 0, \quad  k_{r o}(1) =0.
\end{gather}
Then the total mobility  $\lambda := \lambda_o + \lambda_w$ satisfies the inequalities
\begin{equation}\label{total mobillity}
  0 < c_1:= \min_{S\in[0,1]} \lambda(S) \le \lambda(S) \le c_2:= \max_{S\in[0,1]} \lambda(S)   \quad \text{for} \ \ S\in [0,1];
\end{equation}
obviously, $0\le  \lambda_o \le c_2$ and $0\le  \lambda_w \le c_2.$

These properties determine a typical feature of two-phase porous flow. The assumptions made are both physically reasonable and consistent with established theories (see \cite{Alt-Benedetto-1985,Atoncev-book-1990,Bear-1972})
Equations \eqref{saturation} - \eqref{capilarity} represent the Muskat-Leverett two-phase flow model.
Boundary value problems for the system of these equations  are best  posed after certain transformations
 which facilitate the simplification of the system while simultaneously revealing its basic mathematical structure \cite{Atoncev-book-1990,Spivak-1977,Chavent-1986}; the result is the so-called a fractional flow formulation.

Let us briefly recall these transformations. Due to \eqref{saturation} and \eqref{conservation}, the total flow velocity $\overrightarrow{V} = \vec{V}_o + \vec{V}_w$ satisfies the equation $\nabla \boldsymbol{\cdot} \overrightarrow{V} =0,$ and from \eqref{darcy} and \eqref{capilarity} it  follows
\begin{align}\label{total1}
 - \overrightarrow{V} = & \  \lambda_o\, \mathbb{K} \nabla P_{ o} +  \lambda_w\, \mathbb{K} \nabla P_{ w}
   = \lambda_o\, \mathbb{K} \big(\nabla P_{ w}  + p_c^\prime(S) \,\nabla S\big) +  \lambda_w \, \mathbb{K} \nabla P_{ w}
    \\
  = & \  \lambda\,  \mathbb{K} \, \nabla\bigg(P_w +   \int_{0}^{S} \frac{\lambda_o(\eta)}{\lambda(\eta)} \, p^\prime_c(\eta) \,d\eta   \bigg).\notag
   \end{align}
Here, we used \eqref{total mobillity}.
Following \cite{Atoncev-book-1990} and  introducing the "reduced" pressure
\begin{equation}\label{mean Pressure}
  P = P_w +   \int_{0}^{S} \frac{\lambda_o(\eta)}{\lambda(\eta)} \, p_c^\prime(\eta) \,d\eta,
\end{equation}
we get
\begin{equation}\label{total2}
  - \overrightarrow{V} = \lambda\,  \mathbb{K}  \nabla P .
\end{equation}

Now, we express $\vec{V}_w$ in terms of $S$ and $P$:
\begin{align}\label{total3}
  - \vec{V}_w \stackrel{\eqref{darcy}}{=}  & \ \lambda_w \mathbb{K} \nabla P_w \stackrel{\eqref{mean Pressure}}{=} \lambda_w \mathbb{K} \nabla\Big(P -  \int_{0}^{S} \frac{\lambda_o(\eta)}{\lambda(\eta)} \, p_c^\prime(\eta) \,d\eta\Big)
 \\
   = & \  \lambda_w \mathbb{K} \nabla P -  \frac{\lambda_w \, \lambda_o }{\lambda} \, p_c^\prime \,  \mathbb{K} \nabla S
  \stackrel{\eqref{total2}}{=}  \Lambda(S) \,  \mathbb{K} \nabla S - b(S) \overrightarrow{V}, \notag
\end{align}
where $b(S):= \frac{\lambda_w(S)}{\lambda(S)}$ is the fractional flow function (the fluidity) of the $w$-phase  and
\begin{equation}
  \Lambda(S) := -  \frac{\lambda_w(S) \, \lambda_o(S) }{\lambda(S)} \, p_c^\prime(S) \label{Lambda}
   \end{equation}
is the  capillary diffusion coefficient.
 Due to \eqref{Leveret} - \eqref{total mobillity},
 \begin{equation}\label{Lambda1}
   \Lambda(S) >0 \quad \text{for} \ \ S\in (0,1) \quad \text{and} \quad  \Lambda(0) = \Lambda(1) = 0.
 \end{equation}

Obviously, $ \vec{V}_o =  \Lambda(S) \,  \mathbb{K} \nabla S + (1 - b(S)) \overrightarrow{V}.$

The function $S$ satisfies the equation $\phi \, \partial_t  S + \nabla \boldsymbol{\cdot} \vec{V}_w =0$ (see \eqref{conservation}).
Therefore, based on \eqref{total3}, we get
$
\phi \, \partial_t  S =  \nabla \boldsymbol{\cdot} \big(\Lambda(S) \,  \mathbb{K} \nabla S   - b(S)  \overrightarrow{V}\big)  .
$

Since the objective is to derive a one-dimensional model of two-phase flow as the cylinder $\Omega_\varepsilon$ shrinks to the interval
 $\mathcal{I}:= \{x\colon x_1\in (0, \ell), \ \ x_2=x_3=0\}$ for $\varepsilon \to 0$,   we mark the  unknowns  $S, P, \overrightarrow{V}$  and the permeability tensor $\mathbb{K}$ by the subscript $\varepsilon.$
The matrix  $\mathbb{K}_\varepsilon$ has a block-structure (see \eqref{permeability}), and to describe the heterogeneity of the material in the cross-section of  $\Omega_\varepsilon$ at $x_1$ we use the variables $\frac{\overline{x}_1}{\varepsilon}.$ In addition, there is an  intensity factor $\varepsilon^\beta$ in the cross-section block of the permeability tensor.

Thus, we are led to the system of two quasilinear differential equations
 \begin{gather}
  \nabla \boldsymbol{\cdot} \big(\lambda(S_\varepsilon) \,  \mathbb{K}_\varepsilon(x)  \nabla P_\varepsilon \big) = 0
  \quad \text{in} \ \ \Omega_\varepsilon^T, \label{Ell}
 \\ 
 \phi(x_1) \, \partial_t  S_\varepsilon =  \nabla \boldsymbol{\cdot} \big(\Lambda(S_\varepsilon) \,  \mathbb{K}_\varepsilon(x) \nabla S_\varepsilon  -  b(S_\varepsilon) \, \overrightarrow{V}_\varepsilon \big)  \quad \text{in} \ \ \Omega_\varepsilon^T, \label{Par}
  \end{gather}
 where $ \overrightarrow{V}_\varepsilon = -  \lambda(S_\varepsilon) \,  \mathbb{K}_\varepsilon(x)  \nabla P_\varepsilon .$
 Equation \eqref{Ell} is elliptic with respect to $P_\varepsilon,$ while \eqref{Par} is a degenerate parabolic equation with respect to $S_\varepsilon.$

We supplement systems \eqref{Ell}-\eqref{Par}  with typical boundary conditions that are commonly encountered in various applications:
\begin{gather}\label{bc-cond1}
  S_\varepsilon|_{(x,t)\in \Upsilon^T_\varepsilon(0)} = S^0(0,t) \quad \text{and} \quad  S_\varepsilon|_{(x,t)\in \Upsilon^T_\varepsilon(\ell)} = S^0(\ell,t),
  \\
  \label{bc-cond1+}
  P_\varepsilon|_{(x,t)\in \Upsilon^T_\varepsilon(0)} = q_0(t) \quad \text{and} \quad  P_\varepsilon|_{(x,t)\in \Upsilon^T_\varepsilon(\ell)} = q_\ell(t),
  \\
  \label{bc-cond2}
- \Big( \lambda(S_\varepsilon) \,  \mathbb{K}_\varepsilon(x)  \nabla P_\varepsilon\Big)  \boldsymbol{\cdot} {\boldsymbol{\nu}}_\varepsilon = \varepsilon^{\alpha} Q_\varepsilon(x,t), \quad (x,t) \in \Gamma^T_\varepsilon,
\\ \label{bc-cond3}
- \Big(\Lambda(S_\varepsilon) \,  \mathbb{K}_\varepsilon(x) \nabla S_\varepsilon  -  b(S_\varepsilon) \, \overrightarrow{V}_\varepsilon\Big) \boldsymbol{\cdot} {\boldsymbol{\nu}}_\varepsilon = \varepsilon^{\alpha}\, b(S_\varepsilon) \,
Q_\varepsilon(x,t), \quad (x,t) \in \Gamma^T_\varepsilon,
\end{gather}
where $\Upsilon^T_\varepsilon(0):= \Upsilon_\varepsilon(0) \times (0,T),$ $\Upsilon^T_\varepsilon(\ell):= \Upsilon_\varepsilon(\ell) \times (0,T),$ $\Gamma^T_\varepsilon := \Gamma_\varepsilon \times (0,T),$
${\boldsymbol{\nu}}_\varepsilon$ is the outward unit normal to $\partial \Omega_\varepsilon,$ the parameter $\alpha \ge 1,$ and
$Q_\varepsilon(x,t) := Q(x_1, \tfrac{\overline{x}_1}{\varepsilon}, t).$

A novel aspect of these boundary conditions is the introduction of the intensity factor $\varepsilon^{\alpha}.$
Condition~\eqref{bc-cond2}, which can be rewritten as
$\overrightarrow{V}_\varepsilon \boldsymbol{\cdot} {\boldsymbol{\nu}}_\varepsilon = \varepsilon^{\alpha} Q_\varepsilon$ on $\Gamma^T_\varepsilon,$
means that the rate of flow of the mixture on $\Gamma^T_\varepsilon$  is assumed to be known. Considering \eqref{total3},
condition \eqref{bc-cond3} is reduced to $\vec{V}_{w,\varepsilon} \boldsymbol{\cdot} {\boldsymbol{\nu}}_\varepsilon = \varepsilon^{\alpha} \, b(S_\varepsilon)\, Q_\varepsilon$ and shows that the injection/extraction of the mixture on $\Gamma^T_\varepsilon$ are carried out in proportion to the fluidity of the phases. Since $\overrightarrow{V}_\varepsilon = \vec{V}_{o, \varepsilon} + \vec{V}_{w,\varepsilon},$ we get
 $\vec{V}_{o,\varepsilon} \boldsymbol{\cdot} {\boldsymbol{\nu}}_\varepsilon = \varepsilon^{\alpha} \, \big(1 - b(S_\varepsilon)\big)\, Q_\varepsilon$ on $\Gamma^T_\varepsilon.$

For the saturation $S_\varepsilon(x,t),$  it is also necessary  to specify an initial condition
\begin{equation}\label{initial-cond}
  S_\varepsilon(x,0) = S^0(x_1,0), \quad x \in \Omega_\varepsilon.
\end{equation}

The functions occurring in the right-hand sides of conditions \eqref{bc-cond1} - \eqref{initial-cond} are assumed to be known and to have the following properties:
\begin{itemize}
  \item
  $
  S^0 \in C^{3,2}\big([0, \ell] \times [0, T]\big) \quad \text{and} \quad  0< S^0 < 1 \ \ \text{in} \ \ [0, \ell] \times [0, T];
  $
  \item
  the functions $q_0, q_\ell \in C^2([0,T]),$  $q_0(0)=q_\ell(0)=0,$  $q_0 > 0$  and $q_0(t) > q_\ell(t)$ for $t\in (0,T]$
  (the last inequality indicates that the flow progresses from left to right);
   \item
the function
$
Q(x_1,\overline{\xi}_1,t), \ \ (x_1,\overline{\xi}_1,t) \in \big\{x_1\in [0, \ell], \  \overline{\xi}_1 \in \overline{\varpi}, \   t\in [0,T]\big\},
$
belongs to the class $C^3$ in the  domain of definition,   vanishes  uniformly with respect to $t$  and $\overline{\xi}_1=(\xi_2,\xi_3)$ in $[0, \delta]$ and $[\ell - \delta, \ell],$  where $\delta$ is a fixed small positive number, and $Q|_{t=0}=0.$
\end{itemize}

In order to proceed, it is necessary to define a weak solution to problem \eqref{Ell} - \eqref{initial-cond}, which will be further called problem
$(\mathbb{P}_\varepsilon\mathbb{S}_\varepsilon\!).$ Following \cite[Chapt. 5]{Atoncev-book-1990}, we
say that a pair of  functions $P_\varepsilon$ and $S_\varepsilon$ forms a weak solution to problem $(\mathbb{P}_\varepsilon\mathbb{S}_\varepsilon\!)$ if
\begin{enumerate}
  \item $P_\varepsilon\in L^\infty(\Omega_\varepsilon^T),$ \
  $\nabla P_\varepsilon\in L^\infty\big(0,T; L^{2}(\Omega_\varepsilon)^3\big),$ the  boundary conditions \eqref{bc-cond1+} hold in the sense of traces,
  \item $0 \le S_\varepsilon(x,t) \le 1$ a.e. in $\Omega_\varepsilon^T,$ \
  $\Lambda(S_\varepsilon) \,  \mathbb{K}_\varepsilon \nabla S_\varepsilon \in L^2\big(\Omega_\varepsilon^T\big)^3,$ \ conditions
  \eqref{bc-cond1} and \eqref{initial-cond} are satisfied,
  \item and for any function $\eta$ from the Sobolev space $H^1(\Omega_\varepsilon^T)$ such that $\eta|_{\Upsilon^T_\varepsilon(0)} = \eta|_{\Upsilon^T_\varepsilon(\ell)} =0$ and any function $\zeta$ from the Sobolev space $H^{1}(\Omega_\varepsilon)$ such that $\zeta|_{\Upsilon_\varepsilon(0)} = \zeta|_{\Upsilon_\varepsilon(\ell)} =0,$ we have for a.e. $ t \in (0,T]$
\end{enumerate}
\begin{equation}\label{int1}
  \int_{\Omega_\varepsilon} \overrightarrow{V}_\varepsilon \boldsymbol{\cdot} \nabla\zeta \, dx =  \varepsilon^\alpha \int_{\Gamma_\varepsilon} Q_\varepsilon \, \zeta \, d\sigma_x \qquad \text{and}
\end{equation}
\begin{multline}\label{int2}
   \int_{\Omega_\varepsilon^t} \phi \, S_\varepsilon\, \partial_t \eta \, dxd\tau + \int_{\Omega_\varepsilon^t} \vec{V}_{w,\varepsilon} \boldsymbol{\cdot}\nabla\eta \, dxd\tau - \int_{\Omega_\varepsilon} \phi(x_1) \, S_\varepsilon(x,t) \, \eta(x,t) \, dx \\
  = \varepsilon^\alpha \int_{\Gamma^t_\varepsilon} b(S_\varepsilon) \, Q_\varepsilon \, \eta \, d\sigma_xd\tau  - \int_{\Omega_\varepsilon} \phi(x_1)  \, S^0(x_1,0)\, \eta(x,0)\, dx,
\end{multline}
where $\overrightarrow{V}_\varepsilon$ and $\vec{V}_{w,\varepsilon}$ are defined by \eqref{total2} and \eqref{total3}, respectively.


The above assumptions are sufficient to demonstrate the existence of a weak solution to problem $(\mathbb{P}_\varepsilon\mathbb{S}_\varepsilon\!)$ (see e.g. \cite{Alt-Benedetto-1985,Atoncev-book-1990,Arbogast-1992,Chavent-1986,Chen-2001,Kro-Luckhaus-1984,Kruz-Suk-1977}).
The issue of the uniqueness of weak solutions is more challenging; it has been established under specific conditions that guarantee some regularity of the weak solution:  for degenerate elliptic-parabolic systems of two-phase flow  in  \cite{Chen-2001}, and for nondegenerate ones in \cite{Atoncev-book-1990,Kruz-Suk-1977}.

In our case, the absence of gravity and the assumptions about the initial saturation $S^0$  and the capillary pressure $p_c$ ensure the
maximum principle (see \cite[Chapt. 5, \S 6]{Atoncev-book-1990})
\begin{equation}\label{saturation1}
     0< \delta_0 := \min_{(x,t)\in \overline{\Omega}_\varepsilon^T} S^0(x_1,t) \le S_\varepsilon(x,t) \le \max_{(x,t)\in \overline{\Omega}_\varepsilon^T} S^0(x_1,t) =: \delta_1  < 1
\end{equation}
for a.e. $(x,t) \in \Omega_\varepsilon^T.$
The left inequality in \eqref{saturation1} means that the parabolic equation \eqref{Par} is regular, i.e.
\begin{equation}\label{regular}
  0< \delta_2 \le \Lambda\big(S_\varepsilon(x,t)\big) \quad \text{for a.e.} \ (x,t) \in \Omega_\varepsilon^T,
\end{equation}
 and  there are no stagnant zones in the flow, where limit values $\{0, 1\}$ of $S_\varepsilon$ are reached. Thus, problem $(\mathbb{P}_\varepsilon\mathbb{S}_\varepsilon\!)$ is regular and has  a unique weak solution.


Our goal is to construct and justify the  asymptotic approximation for the solution $(P_\varepsilon, S_\varepsilon)$  to problem $(\mathbb{P}_\varepsilon\mathbb{S}_\varepsilon\!)$  as $\varepsilon \to 0,$ i.e., when  the thin cylinder $\Omega_\varepsilon$ shrinks into the interval $ \mathcal{I} :=  \{x\colon  x_1 \in  (0, \ell), \ \overline{x}_1  = (0, 0)\},$ and in addition, to study the impact of the parameters $\alpha$ and $\beta$ on the asymptotic behavior of the solution.


\section{The subcase $\alpha =1$ and $\beta =0$}\label{Sec: 3}

Problem $(\mathbb{P}_\varepsilon\mathbb{S}_\varepsilon\!)$ contains three parameters:  the parameter $\varepsilon,$  the intensity parameter $\alpha$ in the boundary conditions on $\Gamma_\varepsilon,$ and the intensity parameter $\beta$ in  the  absolute permeability tensor $ \mathbb{K}_\varepsilon.$  In such cases, as has been shown in \cite{Mel-Klev-AA-2019,Mel-AnAppl-2021,Mel-Roh_AsAn-2024,Mel-Roh_JMAA-2024}, to construct an approximation it is necessary to adjust the asymptotic scale to those parameters and  the problem under study. In many cases this leads to a new understanding of the asymptotic series \cite{Mel-AnAppl-2021,Mel-Roh_AsAn-2024}.

We start with this subcase of the first case  in order to describe all  stages of the study in detail.
Moreover, the asymptotic estimates are more accurate in this subcase.

\subsection{Formal asymptotic analysis}
From \eqref{saturation1} it follows that  the first term of the asymptotics of $S_\varepsilon$ as $\varepsilon \to 0$ will be a function $s_0\colon \mathcal{I}\times[0,T] \mapsto \Bbb R$  that satisfies the inequalities
\begin{equation}\label{limit-saturation}
     0< \delta_0  \le s_0(x_1,t) \le \delta_1  < 1, \quad \text{for} \ (x_1,t) \in \mathcal{I}^T := \mathcal{I}\times[0,T].
\end{equation}

Using  the inequality
\begin{equation}\label{ineq1}
      \varepsilon \int_{\Gamma_\varepsilon} u^2 \, d\sigma_x \leq C \Bigg( \varepsilon^2
      \int_{\Omega_\varepsilon}|\nabla_{\overline{x}_1} u|^2 \, dx
 +    \int_{\Omega_\varepsilon} u^2 \, dx \Bigg) \quad \forall \, u\in H^{1}(\Omega_\varepsilon),
\end{equation}
which was proved in \cite[\S 2]{M-MMAS-2008}, inequalities \eqref{cercitive} and  \eqref{total mobillity} and the assumptions for the functions $q_0, \, q_\ell$ and $Q,$ we derive from \eqref{Ell}, \eqref{bc-cond1+} and \eqref{bc-cond2}  the  inequality
\begin{equation}\label{app_estimate}
\max_{t\in [0,T]}\tfrac{1}{\sqrt{\upharpoonleft  \Omega_\varepsilon \upharpoonright_3}}\,
{\|P_\varepsilon\|}_{H^{1}(\Omega_\varepsilon)} \le
 {C}_1.
\end{equation}
 Here $\upharpoonleft\!\!S\!\!\upharpoonright_n$  is the $n$-dimensional Lebesgue measure of a set $S.$
\begin{remark}
Here and further, constants in all inequalities are positive and independent of the solution $(P_\varepsilon, S_\varepsilon)$ and the parameters $\varepsilon,$ $\alpha$ and $\beta.$  Mostly constants with the same indices in different inequalities are different.
\end{remark}

Thus, the first term of the asymptotics of $P_\varepsilon$ is of order $\mathcal{O}(1)$ in the norm of the Sobolev space $H^{1}(\Omega_\varepsilon)$ as $\varepsilon \to 0.$ Taking this and \eqref{limit-saturation} into account,  we propose the following ansatzes:
\begin{equation}\label{Anz-P}
 \mathfrak{P}_\varepsilon(x,t) := p_0(x_1,t) + \varepsilon p_1(x_1,t) + \varepsilon^2 u_2\Big(x_1, \dfrac{\overline{x}_1}{\varepsilon}, t \Big)  + \varepsilon^3 u_3\Big(x_1, \dfrac{\overline{x}_1}{\varepsilon}, t \Big)
\end{equation}
and
\begin{equation}\label{Anz-S}
 \mathfrak{S}_\varepsilon(x,t) := s_0(x_1,t) + \varepsilon s_1(x_1,t) + \varepsilon^2 v_2\Big(x_1, \frac{\overline{x}_1}{\varepsilon}, t \Big)   + \varepsilon^3 v_3\Big(x_1, \dfrac{\overline{x}_1}{\varepsilon}, t \Big)
\end{equation}
to construct  asymptotic approximations for $P_\varepsilon$ and $S_\varepsilon,$ respectively.

Substituting $\mathfrak{P}_\varepsilon$ and $\mathfrak{S}_\varepsilon$ in \eqref{Ell} instead of $P_\varepsilon$ and $S_\varepsilon$ respectively, using Taylor’s formula  for $\lambda,$ collecting terms of the same powers of $\varepsilon$ and equating those sums  to zero, we obtain differential equations
 in rescale variables $\overline{\xi}_1 = \frac{\overline{x}_1}{\varepsilon},$ $\overline{\xi}_1 = (\xi_2, \xi_3),$   for the coefficients $u_2, u_3$ and $v_2, v_3$
 in the rescale cross-section $\Upsilon(x_1) := \big\{ \overline{\xi}_1\colon \  \overline{\xi}_1 \in \varpi\big\}.$

Namely, at $\varepsilon^0$ we get
\begin{equation}\label{eq-u-2}
  -\nabla_{\overline{\xi}_1} \boldsymbol{\cdot}\big(\lambda(s_0(x_1,t)) \, \mathbf{K}\nabla_{\overline{\xi}_1} u_2(x_1,\overline{\xi}_1,t)\big)=
  \partial_{x_1}\big(\lambda(s_0(x_1,t)) \, k_1(x_1)\, \partial_{x_1}p_0(x_1,t)\big)
\end{equation}
for $\overline{\xi}_1 \in \Upsilon(x_1),$ where
\begin{equation}\label{matrix-K}
 \mathbf{K}(x_1,\overline{\xi}_1) =
\left(
\begin{matrix}
 k_{22}(x_1, \overline{\xi}_1) &  k_{23}(x_1, \overline{\xi}_1)\\ 
 k_{32}(x_1, \overline{\xi}_1) & k_{33}(x_1, \overline{\xi}_1)
\end{matrix}
\right),
\end{equation}
and at $\varepsilon^1$
\begin{equation}\label{eq-u-3}
  -\nabla_{\overline{\xi}_1} \boldsymbol{\cdot}\Big(\lambda(s_0)  \mathbf{K}\nabla_{\overline{\xi}_1}u_3
  + s_1  \lambda'(s_0)   \mathbf{K}\nabla_{\overline{\xi}_1}u_2\Big)
   =
  \partial_{x_1}\Big(\lambda(s_0)  k_1 \partial_{x_1}p_1 + s_1  \lambda'(s_0)  k_1\, \partial_{x_1}p_0 \Big).
\end{equation}

Substituting $\mathfrak{P}_\varepsilon$ and $\mathfrak{S}_\varepsilon$ in \eqref{bc-cond2} and equating  terms with the same order of $\varepsilon,$ we derive
\begin{equation}\label{bc-u-2}
  - \lambda(s_0) \, \big(\mathbf{K}\nabla_{\overline{\xi}_1} u_2\big) \boldsymbol{\cdot} \bar{\nu}(\overline{\xi}_1) = Q(x_1,\overline{\xi}_1,t), \quad \overline{\xi}_1 \in \partial\Upsilon_\varepsilon(x_1),
\end{equation}
\begin{equation}\label{bc-u-3}
  \big( \lambda(s_0) \, \mathbf{K}\nabla_{\overline{\xi}_1} u_3 +
  s_1 \, \lambda'(s_0)  \,  \mathbf{K}\nabla_{\overline{\xi}_1} u_2\big) \boldsymbol{\cdot} \bar{\nu}(\overline{\xi}_1)
  = 0, \quad \overline{\xi}_1 \in \partial\Upsilon_\varepsilon(x_1).
\end{equation}
Here, $\bar{\nu}(\tfrac{\overline{x}_1}{\varepsilon})=\big(\nu_2(\tfrac{\overline{x}_1}{\varepsilon}), \nu_3(\tfrac{\overline{x}_1}{\varepsilon})\big)$
is the outward unit normal   to the boundary of the disk  $\Upsilon_\varepsilon(x_1);$ obviously,
the outward unit normal ${\boldsymbol{\nu}}_\varepsilon$ to the lateral surface of the thin cylinder $\Omega_\varepsilon$ is equal to $\big(0, \bar{\nu}(\tfrac{\overline{x}_1}{\varepsilon})\big).$

Equations \eqref{eq-u-2} and \eqref{bc-u-2},  and equations \eqref{eq-u-3} and \eqref{bc-u-3}
are linear Neumann problems in  $\Upsilon(x_1)$ with respect to the variables $\overline{\xi}_1;$
$x_1$ and $t$ are considered parameters. For the uniqueness, we supply each of these problems with the  condition
\begin{equation}\label{uniq_1}
  \langle u_j(x_1, \cdot ,  t ) \rangle_{\Upsilon(x_1)} :=  \int_{\Upsilon(x_1)} u_j(x_1, \overline{\xi}_1,  t ) \, d\overline{\xi}_1 = 0 \quad \big(j \in \{2, 3\}\big).
\end{equation}

Writing down  the solvability condition for each of these problems, we obtain the following differential equations:
\begin{equation}\label{eq-p-0}
  \partial_{x_1}\big(\lambda(s_0)\,  k_1(x_1) \, \partial_{x_1}p_0\big) = \widehat{Q}(x_1, t), \quad (x_1, t) \in \mathcal{I}^T,
\end{equation}
\begin{equation}\label{eq-p-1}
  \partial_{x_1}\big(\lambda(s_0)\,  k_1(x_1) \, \partial_{x_1}p_1 +  s_1 \, \lambda'(s_0) \,  k_1(x_1)\, \partial_{x_1}p_0\big) = 0, \quad (x_1, t) \in \mathcal{I}^T.
\end{equation}
Here
\begin{equation}\label{hat_phi}
  \widehat{Q}(x_1, t) := \frac{1}{\upharpoonleft\!\!\varpi\!\!\upharpoonright_2} \int_{\partial \Upsilon(x_1)} Q(x_1,\bar{\xi}_1, t)\, d\sigma_{\bar{\xi}_1}.
\end{equation}

Carrying out the same calculations for equation \eqref{Par}  and boundary condition \eqref{bc-cond3}, we obtain the following linear Neumann problems for determining the coefficients $v_2$ and $v_3$ in ansatz \eqref{Anz-S}:
\begin{multline}\label{eq-v-2}
  -\nabla_{\overline{\xi}_1} \boldsymbol{\cdot}\big(\Lambda(s_0) \, \mathbf{K}(x_1,\overline{\xi}_1)\nabla_{\overline{\xi}_1} v_2(x_1,\overline{\xi}_1,t) + \lambda_w(s_0) \, \mathbf{K}(x_1,\overline{\xi}_1)\nabla_{\overline{\xi}_1} u_2\big)
  \\
  =
  \partial_{x_1}\big(\Lambda(s_0) \, k_1 \, \partial_{x_1}s_0 + \lambda_w(s_0) \, k_1 \, \partial_{x_1}p_0\big) - \phi(x_1) \, \partial_t s_0, \quad \overline{\xi}_1 \in \Upsilon(x_1),
\end{multline}
\begin{equation}\label{bc-v-2}
  - \big( \Lambda(s_0) \, \mathbf{K}\nabla_{\overline{\xi}_1} v_2 + \lambda_w(s_0) \, \mathbf{K}\nabla_{\overline{\xi}_1} u_2\big) \boldsymbol{\cdot} \bar{\nu}(\overline{\xi}_1)
      = b(s_0)\, Q(x_1,\overline{\xi}_1,t), \quad \overline{\xi}_1 \in \partial\Upsilon_\varepsilon(x_1),
  \end{equation}
  \begin{equation}\label{v-2-zero}
  \langle v_2(x_1, \cdot ,  t ) \rangle_{\Upsilon(x_1)} =0,
  \end{equation}
and
\begin{multline}\label{eq-v-3}
  -\nabla_{\overline{\xi}_1} \boldsymbol{\cdot}\Big(\Lambda(s_0) \, \mathbf{K}\, \nabla_{\overline{\xi}_1} v_3 +
  s_1 \, \Lambda'(s_0) \, \mathbf{K}\,\nabla_{\overline{\xi}_1} v_2
    +\lambda_w(s_0) \, \mathbf{K}\,\nabla_{\overline{\xi}_1} u_3 +
  s_1 \, \lambda'_w(s_0) \, \mathbf{K}\, \nabla_{\overline{\xi}_1} u_2
   \Big)
  \\
  =
  \partial_{x_1}\Big(\Lambda(s_0) \, k_1 \, \partial_{x_1}s_1 + s_1 \, \Lambda'(s_0) \, k_1 \, \partial_{x_1}s_0
  +
  \lambda_w(s_0) \, k_1 \, \partial_{x_1}p_1 + s_1 \, \lambda'_w(s_0) \, k_1 \, \partial_{x_1}p_0
    \Big)
    \\
    - \phi(x_1) \, \partial_t s_1, \quad \overline{\xi}_1 \in \Upsilon(x_1),
\end{multline}
\begin{multline}\label{bc-v-3}
-  \Big( \Lambda(s_0) \, \mathbf{K} \,\nabla_{\overline{\xi}_1} v_3 + s_1 \, \Lambda'(s_0) \, \mathbf{K}\, \nabla_{\overline{\xi}_1} v_2
   +\lambda_w(s_0) \, \mathbf{K}\, \nabla_{\overline{\xi}_1} u_3
   \\
    +
  s_1 \, \lambda'_w(s_0) \, \mathbf{K}\, \nabla_{\overline{\xi}_1} u_2 \Big) \boldsymbol{\cdot} \bar{\nu}(\overline{\xi}_1)
     = b'(s_0)\, s_1\, Q,  \quad \overline{\xi}_1 \in \partial\Upsilon_\varepsilon(x_1),
  \end{multline}
  \begin{equation}\label{v-3-zero}
  \langle v_3(x_1, \cdot ,  t ) \rangle_{\Upsilon(x_1)} =0.
  \end{equation}

The solvability condition for the  Neumann problem \eqref{eq-v-2}-\eqref{v-2-zero} is given by the differential equation
\begin{equation}\label{eq-s-0}
  \phi \, \partial_t s_0 = \partial_{x_1}\big(\Lambda(s_0) \, k_1 \, \partial_{x_1}s_0 + \lambda_w(s_0) \, k_1 \, \partial_{x_1}p_0\big) -  b(s_0)\, \widehat{Q}(x_1, t), \quad (x_1, t) \in \mathcal{I}^T,
\end{equation}
while for the Neumann problem \eqref{eq-v-3}-\eqref{v-3-zero} it is given by the equation
\begin{multline}\label{eq-s-1}
   \phi \, \partial_t s_1 = \partial_{x_1}\Big(\Lambda(s_0) \, k_1 \, \partial_{x_1}s_1 + s_1 \, \Lambda'(s_0) \,  k_1 \, \partial_{x_1}s_0
   \\
  +
  \lambda_w(s_0) \,  k_1 \, \partial_{x_1}p_1 + s_1 \, \lambda'_w(s_0) \,  k_1 \, \partial_{x_1}p_0
    \Big)
    -  b'(s_0) \, s_1 \, \widehat{Q}(x_1, t), \quad (x_1, t) \in \mathcal{I}^T.
\end{multline}

\subsection{The limit problem}\label{subsect-3-2}

Differential equations \eqref{eq-p-0} and  \eqref{eq-s-0} supplemented with appropriate boundary conditions and the initial one from problem
$(\mathbb{P}_\varepsilon\mathbb{S}_\varepsilon\!)$  form the following system
\begin{equation}\label{limit_prob}
 \left\{\begin{array}{l}
\partial_{x_1}\big(\lambda(s_0)\,  k_1(x_1) \, \partial_{x_1}p_0\big) =  \widehat{Q}(x_1,t)\quad \text{in} \ \ \mathcal{I}^T,
\\[3pt]
 \phi \, \partial_t s_0 = \partial_{x_1}\big(\Lambda(s_0) \, k_1 \, \partial_{x_1}s_0 + \lambda_w(s_0) \, k_1 \, \partial_{x_1}p_0\big) -  b(s_0)\, \widehat{Q}\quad \text{in} \ \ \mathcal{I}^T,
 \\[4pt]
 p_0(0,t) = q_0(t) \quad \text{and} \quad  p_0(\ell,t) = q_\ell(t), \quad t\in [0,T],
 \\[4pt]
 s_0(0,t)  = S^0(0,t) \quad \text{and} \quad  s_0(\ell,t) = S^0(\ell,t), \quad t\in [0,T],
 \\[3pt]
 s_0(x_1,0) = S^0(x_1,0), \quad x_1 \in [0, \ell],
 \end{array}\right.
\end{equation}
with unknown functions $p_0(x_1,t)$ and $s_0(x_1,t),$ $(x_1,t) \in [0,\ell]\times [0,T].$ This problem is called \textit{the limit problem} for problem $(\mathbb{P}_\varepsilon\mathbb{S}_\varepsilon\!)$ for the subcase $\alpha =1 \ \text{and} \ \beta =0.$

From the first differential equation in problem \eqref{limit_prob} and the boundary conditions for $p_0$ we derive the representation
\begin{align}
  p_0(x_1,t)  = & \, \,  \int_{0}^{x_1} \frac{1}{\lambda(s_0)\,  k_1(\eta)} \int_{0}^{\eta} \widehat{Q}(\varsigma,t) \, d\varsigma \, d\eta \notag
  \\[2pt]
  & + \frac{q_\ell(t) -q_0(t) - \int_{0}^{\ell} \frac{1}{\lambda(s_0)\,  k_1} \int_{0}^{\eta} \widehat{Q} \, d\varsigma \, d\eta}{\int_{0}^{\ell} \frac{1}{\lambda(s_0)\,  k_1} \, d\eta} \, \int_{0}^{x_1} \frac{1}{\lambda(s_0)\,  k_1(\eta)} \, d\eta  + q_0(t). \label{representation}
\end{align}
Using \eqref{representation}, the second differential equation in \eqref{limit_prob} is reduced to
\begin{equation}\label{limit_parab-eq}
\phi \, \partial_t s_0 = \partial_{x_1}\big(\Lambda(s_0) \,  k_1(x_1) \, \partial_{x_1}s_0\big) + a(s_0,x_1,t) \, \partial_{x_1} s_0,
\end{equation}
where
$$
a(s_0,x_1,t) = b'(s_0)\, \left(\int_{0}^{x_1} \widehat{Q}(\varsigma,t) \, d\varsigma  + \frac{q_\ell(t) -q_0(t) - \int_{0}^{\ell}\left( \frac{1}{\lambda(s_0)\,  k_1} \int_{0}^{\eta} \widehat{Q} \, d\varsigma \right) d\eta}{\int_{0}^{\ell} \frac{1}{\lambda(s_0)\,  k_1} \, d\eta}\right).
$$

Considering the properties of the function  $\phi,$ the coefficient near  the derivative $\partial_t s_0$ does not introduce any features into  equation \eqref{limit_parab-eq}. Consequently, we can postulate that it is equal to one. Otherwise, we must implement the following replacement: $\vartheta =  \phi \, s_0.$
Therefore, further, we justify  the existence and uniqueness of the solution to the problem
\begin{equation}\label{limit_parab-prob}
 \left\{\begin{array}{l}
 \partial_t s_0 = \partial_{x_1}\big(\Lambda(s_0) \,  \partial_{x_1}s_0\big) + a(s_0,x_1,t) \, \partial_{x_1} s_0 \quad \text{in} \ \ \mathcal{I}^T,
 \\[4pt]
 s_0(0,t)  = S^0(0,t) \quad \text{and} \quad  s_0(\ell,t) = S^0(\ell,t), \quad t\in [0,T],
 \\[3pt]
 s_0(x_1,0) = S^0(x_1,0), \quad x_1 \in [0, \ell].
 \end{array}\right.
\end{equation}
For this we verify the conditions of Theorem 5.2 from \cite[Chapt. VI]{Lad_Sol_Ura_1968}. Due to \eqref{limit-saturation} and \eqref{Lambda1},  the function $\Lambda$ is bounded  below and above by positive constants (this means that the inequalities (5.10) from \cite{Lad_Sol_Ura_1968} with $m=2$ are satisfied).  The inequality (5.9) holds with $\Phi=1.$
Next, according Remark 5.1 from \cite{Lad_Sol_Ura_1968}, we show that the inequality (5.6) holds.
Due to the properties of the functions $p_c,$ $\lambda_0,$ $\lambda_w,$ and $\lambda$ (see \eqref{Leveret} - \eqref{total mobillity}), the derivative $\Lambda'$ is bounded, and recalling properties of the functions $k_1,$ $q_0,$ $q_\ell$ and $Q$, we get the boundedness  of the coefficient $a.$
Thus,
$$
|\Lambda'(s)| \, (1+ |p|)^2 + |\Lambda'(s) \, p^2 +  p\,a(s,x_1,t)| \le  C_1 \, (1+ |p|)^2.
$$
This inequality coincides with  (5.6) for the coefficients of problem  \eqref{limit_parab-prob}.

In addition, the smoothness of the functions $k_1,$ $q_0,$ $q_\ell$ and $Q$ ensures the necessary smoothness of the coefficients $\Lambda$ and $a$ to satisfy the smoothness conditions in Theorem 5.2 from \cite[Chapt. VI]{Lad_Sol_Ura_1968}. The assumption for the smoothness of the function $S^0$ ensure the fulfillment of the compatibility condition of zero order for  problem \eqref{limit_parab-prob}.
To satisfy the compatibility condition of first order, we additionally assume that
\begin{equation}\label{com-cond-1}
  \Big(\partial_t S^0(x_1,0) - \partial_{x_1}\big(\Lambda(S^0(x_1,0)) \,  \partial_{x_1}S^0(x_1,0)\big) \Big)\Big|_{x_1=0, \, \text{and} \, x_1=\ell} =0.
\end{equation}

Thus, based on Theorem 5.2 \cite{Lad_Sol_Ura_1968}, the following statement is true.

\begin{proposition}\label{Prop3-1}
Problem \eqref{limit_parab-prob} has a unique solution in  the H\"older space  $\mathcal{C}^{2+\gamma, 1 +\gamma}([0,\ell]\times[0,T])$ with $\gamma \in (0,1).$
\end{proposition}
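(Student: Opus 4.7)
I would invoke the classical existence-and-uniqueness theorem for one-dimensional quasilinear parabolic equations, Theorem~5.2 in Chapter~VI of \cite{Lad_Sol_Ura_1968}, after establishing an \emph{a priori} $L^\infty$-bound that keeps $s_0$ away from the degeneracy set of $\Lambda$. Since the boundary and initial data $S^0$ take values in $[\delta_0,\delta_1]\subset(0,1)$, a standard weak maximum principle applied to \eqref{limit_parab-prob} yields $\delta_0 \le s_0(x_1,t) \le \delta_1$, and combined with \eqref{Lambda1} this gives $\Lambda(s_0) \ge \delta_2 > 0$. Hence the diffusion coefficient is uniformly positive, so conditions (5.9) and (5.10) of \cite[Chapt.~VI]{Lad_Sol_Ura_1968} hold; the growth condition~(5.6) reduces to the inequality already displayed in the text, since $\Lambda'$ and the convective coefficient $a(s,x_1,t)$ remain bounded on $[\delta_0,\delta_1]$ (the denominator $\int_0^\ell \tfrac{d\eta}{\lambda(s)\,k_1(\eta)}$ stays uniformly away from zero by \eqref{cercitive} and \eqref{total mobillity}).

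Next I would verify the H\"older regularity of the coefficients: the $C^3$-assumptions on $p_c,\lambda_w,\lambda_o,k_1$ and the $C^3$-smoothness of $Q,q_0,q_\ell$ imply that $\Lambda$ is $C^2$ in $s$ and that $a(\,\cdot\,,x_1,t)$ has the regularity required by the theorem. The zero-order compatibility at the corners $(0,0)$ and $(\ell,0)$ holds automatically from the agreement of $s_0|_{t=0}=S^0(\cdot,0)$ with the lateral traces, while the first-order compatibility is precisely the extra hypothesis \eqref{com-cond-1}. Theorem~5.2 then delivers a unique classical solution in $\mathcal{C}^{2+\gamma,1+\gamma}([0,\ell]\times[0,T])$.

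\textbf{Main obstacle.} The delicate point is that the convective coefficient $a(s_0,x_1,t)$ is \emph{nonlocal} in $s_0$ through the integrals in \eqref{representation}, which places the problem outside the purely local quasilinear framework of \cite{Lad_Sol_Ura_1968}. To bridge this gap I would set up a Schauder fixed-point argument: for each $\tilde s$ in a closed convex subset of $\mathcal{C}^{\gamma,\gamma/2}([0,\ell]\times[0,T])$ with values in $[\delta_0,\delta_1]$, freeze the nonlocal factors of $a$ at $\tilde s$, apply Theorem~5.2 to the resulting \emph{local} quasilinear problem to obtain $s_0[\tilde s]$, and use the uniform a priori H\"older estimate together with the continuous dependence of $a$ on $\tilde s$ to show that $\tilde s \mapsto s_0[\tilde s]$ is a continuous compact self-map. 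Uniqueness then follows by subtracting two hypothetical solutions, exploiting the Lipschitz dependence of $a$ and $\Lambda$ on $s_0$, and closing the resulting energy inequality with Gronwall's lemma.
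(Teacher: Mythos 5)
Your verification of the hypotheses of Theorem~5.2 of \cite[Chapt.~VI]{Lad_Sol_Ura_1968} is exactly the paper's route: uniform positivity of $\Lambda(s_0)$ from $\delta_0\le s_0\le\delta_1$ together with \eqref{Lambda1} (their conditions (5.10) with $m=2$), condition (5.9) with $\Phi=1$, the growth condition (5.6) from the boundedness of $\Lambda'$ and of $a$, H\"older regularity of the coefficients from the $C^3$ assumptions, and the zero- and first-order compatibility conditions, the latter being precisely the extra hypothesis \eqref{com-cond-1}. Where you genuinely depart from the paper is in the treatment of the nonlocal dependence of $a(s_0,x_1,t)$ on the whole profile $s_0(\cdot,t)$ through the integrals coming from \eqref{representation}: the paper applies Theorem~5.2 directly, recording only that $a$ is bounded, and does not comment on the fact that the nonlocal factor (a function of $t$ alone once the profile is fixed) places the equation outside the literal quasilinear framework of the quoted theorem; you instead freeze that factor at a candidate $\tilde s$, solve the resulting local problem by Theorem~5.2, and close with a Schauder fixed point plus a Gronwall-type uniqueness argument. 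Your extra layer costs some work (continuity and compactness of the solution map, and a separate uniqueness proof, since uniqueness for each frozen problem does not by itself give uniqueness of the fixed point), but it buys a rigorous handling of the nonlocality; note that since $Q|_{t=0}=q_0(0)=q_\ell(0)=0$, the frozen convective coefficient vanishes at $t=0$ for every $\tilde s$, so the first-order compatibility condition of each frozen problem is again exactly \eqref{com-cond-1}, which keeps the scheme uniform. Your Gronwall step is moreover in the same spirit as the paper's own uniqueness argument for problem \eqref{limit_parab-prob+1}, where the analogous nonlocal kernel $A_2$ appears explicitly, and your derivation of $\delta_0\le s_0\le\delta_1$ from a maximum principle for \eqref{limit_parab-prob} itself is slightly more self-contained than the paper's, which takes \eqref{limit-saturation} as inherited from \eqref{saturation1}.
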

This statement and representation \eqref{representation}  mean that there exist a unique classical solution to the limit problem \eqref{limit_prob}.

\subsection{Determination of the other coefficients in \eqref{Anz-P} and \eqref{Anz-S}}\label{subsect-3-3}

Differential equations in problem \eqref{limit_prob} are the solvability conditions for the Neumann problems  \eqref{eq-u-2}, \eqref{bc-u-2}, \eqref{uniq_1} and \eqref{eq-v-2}-\eqref{v-2-zero} respectively.
Therefore, we first uniquely determine the solution $u_2,$ and then the solution $v_2.$

Taking the first differential equation in \eqref{limit_prob} into account, the Neumann problem \eqref{eq-u-2}, \eqref{bc-u-2}, \eqref{uniq_1} can be now rewritten  as follows:
\begin{equation}\label{Neumann u2}
 \left\{\begin{array}{l}
 - \nabla_{\overline{\xi}_1} \boldsymbol{\cdot}\big( \mathbf{K} \nabla_{\overline{\xi}_1} u_2(x_1,\overline{\xi}_1,t)\big)=
 \dfrac{\widehat{Q}(x_1,t)}{\lambda(s_0(x_1,t))}, \quad \overline{\xi}_1 \in \Upsilon(x_1),
\\[4pt]
-  \big(\mathbf{K}\nabla_{\overline{\xi}_1} u_2\big) \boldsymbol{\cdot} \bar{\nu}(\overline{\xi}_1)  = \dfrac{Q(x_1,\overline{\xi}_1,t)}{\lambda(s_0)}, \quad \overline{\xi}_1 \in \partial\Upsilon_\varepsilon(x_1),
\\[4pt]
\langle u_2(x_1, \cdot ,  t ) \rangle_{\Upsilon(x_1)} =0.
\end{array}\right.
\end{equation}
Since the function $Q$ vanishes if $x_1 \in [0, \delta] \cup  [\ell - \delta, \ell]$ (see the assumptions for $Q$),  we conclude that $u_2(x_1,\overline{\xi}_1,t) = 0$ for those values of $x_1.$ In addition, since $Q|_{t=0}= q_0|_{t=0}= q_\ell|_{t=0}=0,$ the coefficient $u_2$  vanishes also at $t=0.$

Using the first two relations in \eqref{Neumann u2} and the differential equation for $s_0$ in \eqref{limit_prob}, the Neumann problem \eqref{eq-v-2}-\eqref{v-2-zero}  is presented herewith in the following form:
\begin{equation}\label{Neumann v2}
 \left\{\begin{array}{l}
 \nabla_{\overline{\xi}_1} \boldsymbol{\cdot}\big(\Lambda(s_0) \, \mathbf{K}(x_1,\overline{\xi}_1)\nabla_{\overline{\xi}_1} v_2(x_1,\overline{\xi}_1,t)\big) = 0, \quad \overline{\xi}_1 \in \Upsilon(x_1),
\\
\nabla_{\overline{\xi}_1} \boldsymbol{\cdot}\big(\Lambda(s_0) \, \mathbf{K}(x_1,\overline{\xi}_1)\nabla_{\overline{\xi}_1} v_2(x_1,\overline{\xi}_1,t) = 0, \quad \overline{\xi}_1 \in \partial\Upsilon_\varepsilon(x_1),
\\
\langle v_2(x_1, \cdot ,  t ) \rangle_{\Upsilon(x_1)} =0.
\end{array}\right.
\end{equation}
Obviously, $v_2 \equiv 0.$

For the coefficients $p_1$ and $s_1$ we obtain the following linear system of differential equations:
\begin{equation}\label{limit_parab-prob+1}
 \left\{\begin{array}{l}
 \partial_{x_1}\big(\lambda(s_0)\,  k_1 \, \partial_{x_1}p_1 +  s_1 \, \lambda'(s_0) \,  k_1\, \partial_{x_1}p_0\big) = 0 \quad \text{in} \ \mathcal{I}^T,
 \\[4pt]
  \phi \, \partial_t s_1 = \partial_{x_1}\Big(\Lambda(s_0) \,  k_1 \, \partial_{x_1}s_1 +  a_1\, s_1  +  a_2 \, \partial_{x_1}p_1\Big)
  -  b'(s_0) \,  \widehat{Q}\, s_1 \quad \text{in} \ \mathcal{I}^T,
  \\[4pt]
 p_1(0,t) = 0 \quad \text{and} \quad  p_1(\ell,t) = 0, \quad t\in [0,T],
 \\[4pt]
  s_1(0,t)  = 0 \quad \text{and} \quad  s_1(\ell,t) = 0, \quad t\in [0,T],
 \\[3pt]
 s_1(x_1,0) = 0, \quad x_1 \in [0, \ell],
 \end{array}\right.
\end{equation}
where
\begin{equation}\label{coeff-a1-a2}
  a_1(x_1,t)  = \Lambda'(s_0) \, k_1\, \partial_{x_1}s_0 + \lambda'_w(s_0) \, k_1\, \partial_{x_1}p_0,\quad
  a_2(x_1,t)  = \lambda_w(s_0) \, k_1(x_1).
 \end{equation}

 It is easy to see that this problem has the trivial solution $p_1 \equiv s_1 \equiv 0.$ It remains to show that the solution is unique. From the first differential equation in \eqref{limit_parab-prob+1} and the corresponding boundary conditions it follows that
 \begin{equation}\label{pre-p1}
 p_1(x_1,t) = d_1(t, s_1) \int_{0}^{x_1} \frac{d\eta}{\lambda(s_0(\eta,t))\,  k_1(\eta)} - \int_{0}^{x_1} \frac{s_1(\eta,t) \, \lambda'(s_0(\eta,t)) \, \partial_{x_1}p_0(\eta,t)}{\lambda(s_0(\eta,t))} \, d\eta,
 \end{equation}
 where
 $$
 d_1(t, s_1) = \int_{0}^{\ell} \frac{s_1 \, \lambda'(s_0) \, \partial_{x_1}p_0}{\lambda(s_0)} \, d\eta \, \bigg(\int_{0}^{\ell} \frac{d\varsigma}{\lambda(s_0)\, k_1}\bigg)^{-1}.
 $$
 Substituting \eqref{pre-p1}  into the second differential equation of problem \eqref{limit_parab-prob+1}, we get
\begin{equation}\label{eq-s1}
    \phi \, \partial_t s_1 = \partial_{x_1}\Big(\Lambda(s_0) \,  k_1 \, \partial_{x_1}s_1  + A_1(x_1,t) \, s_1\Big) + \int_{0}^{\ell} A_2(x_1,\eta, t) \, s_1(\eta,t)\, d\eta + A_3(x_1,t)\, s_1,
\end{equation}
where
\begin{gather*}
  A_1(x_1,t) = a_1(x_1,t) -  \frac{\lambda'(s_0) \,   k_1 \, \partial_{x_1}p_0}{\lambda(s_0)}, \qquad A_3(x_1,t) = -    b'(s_0) \,  \widehat{Q}
      \\
  A_2(x_1,\eta,t) = \bigg(\int_{0}^{\ell} \frac{d\varsigma}{\lambda(s_0)\,  k_1}\bigg)^{-1} b'(s_0(x_1,t)) \, \partial_{x_1} s_0(x_1,t) \, \frac{ \lambda'(s_0(\eta,t)) \, \partial_{x_1}p_0(\eta,t)}{\lambda(s_0(\eta,t))}.
\end{gather*}
In virtue of the assumptions for the coefficients of problem $(\mathbb{P}_\varepsilon\mathbb{S}_\varepsilon\!)$ and the smoothness  of the solution $(p_0, s_0)$ to the limit problem \eqref{limit_prob}, the functions $A_1, A_2, A_3$ are bounded in the uniform pointwise norm. Taking this into account, multiplying equation \eqref{eq-s1} by $s_1,$  and integrating by parts over the domain $(0, \tau)\times (0, \ell),$ where $\tau$ is arbitrary number from $(0,T),$ we get the inequality
\begin{multline}\label{in-1}
c_0 \int_{0}^{\ell} s_1^2(x_1,\tau)\, dx_1 + c_1 \int_{0}^{\tau}\int_{0}^{\ell} |\partial_{x_1} s_1|^2\, dx_1\, dt
\\
\le
c_2 \int_{0}^{\tau}\int_{0}^{\ell} |\partial_{x_1} s_1| \, |s_1|\, dx_1\, dt + c_3 \int_{0}^{\tau} \bigg(\int_{0}^{\ell}|s_1|\, dx_1\bigg)^2 dt + c_4
\int_{0}^{\tau}\int_{0}^{\ell} |s_1|^2\, dx_1\, dt.
\end{multline}
Applying  Cauchy's inequality with $\delta = \frac{c_1}{2} \ (ab \leq \delta a^2 + \tfrac{b^2}{4\delta})$ to  the first integral in the right-hand side of \eqref{in-1} and  the Cauchy-Bunyakovsky-Schwarz inequality to the second integral,   we deduce
\begin{equation}\label{in-2}
  c_0 \int_{0}^{\ell} s_1^2(x_1,\tau)\, dx_1 + \frac{c_1}{2} \int_{0}^{\tau}\int_{0}^{\ell} |\partial_{x_1} s_1|^2\, dx_1\, dt
\le c_5 \int_{0}^{\tau}\int_{0}^{\ell} |s_1|^2\, dx_1\, dt.
\end{equation}
Then Gronwall's lemma, applied to the inequality
$$
\int_{0}^{\ell} s_1^2(x_1,\tau)\, dx_1 \le \frac{c_5}{c_0} \, \int_{0}^{\tau}\int_{0}^{\ell} |s_1|^2\, dx_1\, dt,
$$
yields $\int_{0}^{\ell} s_1^2(x_1,\tau)\, dx_1 =0$ for any $\tau \in (0,T).$ Thus, $s_1 \equiv 0,$ and from \eqref{pre-p1} it follows that $p_1 \equiv  0.$

\begin{remark}\label{remark-3-2}
 If boundary conditions or right-hand sides in the differential equations of problem \eqref{limit_parab-prob+1} are non-zero, then
 Gronwall's lemma gives an a priori estimate for the solution, and Galerkin's method proves its existence.
 For  more general elliptic-parabolic problems, the existence and uniqueness of solutions were proved in
 \cite{Alt-Luck-1983,Bokalo-2017}.
\end{remark}

Next, it follows from the Neumann problem \eqref{eq-u-3}, \eqref{bc-u-3}, \eqref{uniq_1} that the solution $u_3 $ is equal to zero, and from problem \eqref{eq-v-3}-\eqref{v-3-zero} that  $v_3 \equiv 0.$

As a result, the approximation functions \eqref{Anz-P} and \eqref{Anz-S} for the solution to problem $(\mathbb{P}_\varepsilon\mathbb{S}_\varepsilon\!)$ now look as follows
\begin{equation}\label{AppFunctions}
  \mathfrak{P}_\varepsilon(x,t) = p_0(x_1,t) +  \varepsilon^2 u_2\Big(x_1, \dfrac{\overline{x}_1}{\varepsilon}, t \Big)
\quad \text{and} \quad 
\mathfrak{S}_\varepsilon(x,t) = s_0(x_1,t).
\end{equation}
Moreover, from the above calculations it follows that
\begin{gather}\label{eq-P-e}
\nabla \boldsymbol{\cdot} \big(\lambda(s_0) \,  \mathbb{K}_\varepsilon(x)  \nabla \mathfrak{P}_\varepsilon \big) = \varepsilon^2 \mathcal{F}^{(1)}_\varepsilon
  \quad \text{in} \ \ \Omega_\varepsilon^T,
  \\ \label{eq-S-e}
 \phi(x_1) \, \partial_t  s_0 -  \nabla \boldsymbol{\cdot} \big(\Lambda(s_0) \,  \mathbb{K}_\varepsilon(x) \nabla s_0  -  b(s_0) \, \overrightarrow{\mathfrak{V}}_\varepsilon \big) = \varepsilon^2\mathcal{F}^{(2)}_\varepsilon  \quad \text{in} \ \ \Omega_\varepsilon^T,
\end{gather}
\begin{gather}
\label{bc-P-e}
\Big( \lambda(s_0) \,  \mathbb{K}_\varepsilon(x)  \nabla \mathfrak{P}_\varepsilon\Big)  \boldsymbol{\cdot} {\boldsymbol{\nu}}_\varepsilon + \varepsilon\,  Q_\varepsilon = 0 \quad \text{on} \ \  \Gamma^T_\varepsilon,
\\ \label{bc-S-e}
\Big(\Lambda(s_0) \,  \mathbb{K}_\varepsilon(x) \nabla s_0  -  b(s_0) \, \overrightarrow{\mathfrak{V}}_\varepsilon\Big) \boldsymbol{\cdot} {\boldsymbol{\nu}}_\varepsilon + \varepsilon\, b(s_0) \,
Q_\varepsilon(x,t)= 0 \quad \text{on} \ \  \Gamma^T_\varepsilon,
\end{gather}
\begin{gather}
  \label{bc-P-e}
  \mathfrak{P}_\varepsilon|_{(x,t)\in \Upsilon^T_\varepsilon(0)} = q_0(t) \quad \text{and} \quad  \mathfrak{P}_\varepsilon|_{(x,t)\in \Upsilon^T_\varepsilon(\ell)} = q_\ell(t),
\\
\label{bc-S-e}
  s_0|_{(x,t)\in \Upsilon^T_\varepsilon(0)} = S^0(0,t), \quad   s_0|_{(x,t)\in \Upsilon^T_\varepsilon(\ell)} = S^0(\ell,t), \quad s_0|_{t=0} = S^0(x_1,0),
  \end{gather}
where the vector-function $ \overrightarrow{\mathfrak{V}}_\varepsilon = -  \lambda(s_0) \,  \mathbb{K}_\varepsilon(x)  \nabla \mathfrak{P}_\varepsilon,$ 
$$
\mathcal{F}^{(1)}_\varepsilon =  \partial_{x_1}(\lambda(s_0)\, k_1\, \partial_{x_1} u_2)\quad \text{and} \quad
\mathcal{F}^{(2)}_\varepsilon = - \partial_{x_1}(\lambda_w(s_0)\, k_1 \, \partial_{x_1} u_2).
$$
Due to the smoothness of the functions $k_1,$ $\lambda,$ $\lambda_w,$ $s_0$ (see Proposition~\ref{Prop3-1}) and $u_2,$ we have
\begin{equation}\label{est-F}
  \sup_{\Omega_\varepsilon^T}|\mathcal{F}^{(1)}_\varepsilon(x,t)| \le C_1, \quad
  \sup_{\Omega_\varepsilon^T}|\mathcal{F}^{(2)}_\varepsilon(x,t)| \le C_2.
\end{equation}

\subsection{Justification}
Here, we present and prove one of the main results.

\begin{theorem}[$\alpha = 1, \, \beta =0$]\label{Th_1}
Assume that, in addition to the main assumptions made in Section~\ref{Sec:Statement},
condition \eqref{com-cond-1} is  satisfied.
Then,  there are positive constants $\tilde{C}_0,$ $\tilde{C}_1,$ $\tilde{C}_2$ and $\varepsilon_0$ such that for all $\varepsilon\in (0, \varepsilon_0)$
 \begin{equation}\label{app-estimate1}
 \tfrac{1}{\sqrt{\upharpoonleft  \Omega_\varepsilon \! \upharpoonright_3}}\,  \max_{t\in \times [0,T]}
 {\|P_\varepsilon(\cdot,t) - \mathfrak{P}_\varepsilon(\cdot,t) \|}_{H^1(\Omega_\varepsilon)}  \le \tilde{C}_0 \, \varepsilon^{2},
\end{equation}
\begin{equation}\label{app-estimate2}
\tfrac{1}{\sqrt{\upharpoonleft  \Omega_\varepsilon \! \upharpoonright_3}}\,  \max_{t\in \times [0,T]}
{\| S_\varepsilon(\cdot,t) - s_0(\cdot,t) \|}_{L^2(\Omega_\varepsilon)}
\le  \tilde{C}_1\, \varepsilon^2,
\end{equation}
\begin{equation}\label{app-estimate3}
\tfrac{1}{\sqrt{\upharpoonleft  \Omega_\varepsilon \! \upharpoonright_3}}\,
{\|S_\varepsilon - s_0\|}_{L^2(0,T; H^1(\Omega_\varepsilon))} \le
 \tilde{C}_2\, \varepsilon^2.
\end{equation}
 \end{theorem}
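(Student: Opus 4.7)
The plan is to estimate the residuals $R^P_\varepsilon := P_\varepsilon - \mathfrak{P}_\varepsilon$ and $R^S_\varepsilon := S_\varepsilon - s_0$ via coupled energy estimates. Subtracting the approximate system satisfied by $(\mathfrak{P}_\varepsilon, s_0)$ from the exact problem $(\mathbb{P}_\varepsilon\mathbb{S}_\varepsilon\!)$, I obtain a linear elliptic-parabolic system for $(R^P_\varepsilon, R^S_\varepsilon)$ whose sources are the uniformly $\mathcal{O}(\varepsilon^2)$ interior residuals from \eqref{est-F}, a boundary remainder of order $\varepsilon(b(S_\varepsilon)-b(s_0))Q_\varepsilon$ on $\Gamma_\varepsilon$ (arising from the mismatch between the exact and approximate saturation-flux conditions), and nonlinear coupling terms such as $(\lambda(S_\varepsilon)-\lambda(s_0))\,\mathbb{K}_\varepsilon\nabla \mathfrak{P}_\varepsilon$, $(\Lambda(S_\varepsilon)-\Lambda(s_0))\,\mathbb{K}_\varepsilon\nabla s_0$, and $(b(S_\varepsilon)-b(s_0))\overrightarrow{\mathfrak{V}}_\varepsilon$. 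Thanks to the maximum principle \eqref{saturation1} and the $C^3$ smoothness of $\lambda,\Lambda,b$, all these coupling factors are controlled pointwise by $|R^S_\varepsilon|$. Both errors vanish on $\Upsilon_\varepsilon(0)\cup\Upsilon_\varepsilon(\ell)$, and $R^S_\varepsilon$ additionally vanishes at $t=0$.

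First I would test the pressure error equation with $R^P_\varepsilon$ itself and integrate by parts. The lateral flux term cancels identically because the approximation satisfies the Darcy boundary condition \eqref{bc-cond2} exactly, while the end-face contribution vanishes. Combining the uniform ellipticity of $\lambda(S_\varepsilon)\mathbb{K}_\varepsilon$ (granted by \eqref{cercitive} and \eqref{total mobillity} with $\beta=0$), the Lipschitz bound on $\lambda$, the uniform estimate $\|\nabla\mathfrak{P}_\varepsilon\|_{L^\infty(\Omega_\varepsilon^T)}\le C$ inherited from the regularity of $p_0$ (Proposition~\ref{Prop3-1}) together with the smooth cell corrector $u_2$ (note that the $\varepsilon^2 u_2(x_1,\overline{x}_1/\varepsilon,t)$ correction contributes $\mathcal{O}(\varepsilon)$ to the gradient), a Poincar\'{e} inequality in the $x_1$-direction, and Cauchy's inequality with small parameter, I expect to arrive at the conditional elliptic bound
\begin{equation*}
\|\nabla R^P_\varepsilon(\cdot,t)\|_{L^2(\Omega_\varepsilon)}^{2}
\le C\bigl(\|R^S_\varepsilon(\cdot,t)\|_{L^2(\Omega_\varepsilon)}^{2} + \varepsilon^{4}\,\upharpoonleft\!\Omega_\varepsilon\!\upharpoonright_{3}\bigr),\qquad t\in[0,T].
\end{equation*}

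Next, I would test the saturation error equation with $R^S_\varepsilon$ and integrate over $\Omega_\varepsilon\times(0,\tau)$, using Steklov averaging to justify integration by parts in time at the weak-solution level. The diffusive term yields a coercive contribution $\ge c\|\nabla R^S_\varepsilon\|_{L^2(\Omega_\varepsilon)}^{2}$ via the non-degeneracy \eqref{regular} together with \eqref{cercitive}, plus a Lipschitz-controlled remainder. The drift term is decomposed as $b(S_\varepsilon)(\overrightarrow{V}_\varepsilon-\overrightarrow{\mathfrak{V}}_\varepsilon)+(b(S_\varepsilon)-b(s_0))\overrightarrow{\mathfrak{V}}_\varepsilon$, with $\|\overrightarrow{V}_\varepsilon-\overrightarrow{\mathfrak{V}}_\varepsilon\|_{L^2(\Omega_\varepsilon)}\le C(\|\nabla R^P_\varepsilon\|_{L^2(\Omega_\varepsilon)}+\|R^S_\varepsilon\|_{L^2(\Omega_\varepsilon)})$ and $\overrightarrow{\mathfrak{V}}_\varepsilon$ uniformly bounded. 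The boundary remainder $-\varepsilon\int_{\Gamma_\varepsilon}(b(S_\varepsilon)-b(s_0))Q_\varepsilon R^S_\varepsilon\,d\sigma_x$ is absorbed via the trace-type inequality \eqref{ineq1} and the Lipschitz bound on $b$, producing the typical split $\varepsilon^{2}\|\nabla_{\overline{x}_1}R^S_\varepsilon\|_{L^2}^2 + \|R^S_\varepsilon\|_{L^2}^2$ in which the gradient piece is absorbed into the coercive term for $\varepsilon$ small. Substituting the conditional elliptic bound for $\|\nabla R^P_\varepsilon\|_{L^2}^{2}$ yields a differential inequality of Gronwall type for $\int_{\Omega_\varepsilon}\phi(R^S_\varepsilon)^2\,dx$; since $R^S_\varepsilon(\cdot,0)=0$, Gronwall's lemma produces \eqref{app-estimate2} and, after integrating in time, \eqref{app-estimate3}. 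Feeding the result back into the elliptic estimate and invoking the Poincar\'{e} inequality completes \eqref{app-estimate1}. The prefactor $\upharpoonleft\!\Omega_\varepsilon\!\upharpoonright_{3}^{-1/2}$, which is of order $\varepsilon^{-1}$, then converts the raw $\mathcal{O}(\varepsilon^{2}\sqrt{\upharpoonleft\!\Omega_\varepsilon\!\upharpoonright_{3}})$ bounds into the stated $\mathcal{O}(\varepsilon^{2})$.

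The principal difficulty is the two-way coupling: the elliptic inequality for $R^P_\varepsilon$ contains $\|R^S_\varepsilon\|_{L^2}$ through $\lambda(S_\varepsilon)-\lambda(s_0)$, while the parabolic inequality for $R^S_\varepsilon$ contains $\|\nabla R^P_\varepsilon\|_{L^2}$ through the drift. The key to breaking this circularity is to keep the elliptic bound in the conditional form displayed above and substitute it into the parabolic inequality \emph{before} applying Gronwall, so that only $\|R^S_\varepsilon\|_{L^2}^{2}$ and the harmless residual term remain on the right-hand side. A secondary obstacle is the careful tuning of the Cauchy--Young constants so that the boundary trace contribution and all cross gradient terms can be absorbed into the coercive contribution $c\|\nabla R^S_\varepsilon\|_{L^2}^{2}$; this absorption is only possible for sufficiently small $\varepsilon$ and accounts for the restriction $\varepsilon\in(0,\varepsilon_0)$ in the statement.
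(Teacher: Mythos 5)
Your proposal is correct and follows essentially the same route as the paper's own proof: the same error decomposition via the mean value theorem, the same conditional elliptic bound $\|\nabla(P_\varepsilon-\mathfrak{P}_\varepsilon)\|^2_{L^2(\Omega_\varepsilon)}\le C(\|S_\varepsilon-s_0\|^2_{L^2(\Omega_\varepsilon)}+\varepsilon^6)$ obtained by testing with the pressure error and using the $x_1$-Poincar\'e inequality, the same substitution of that bound into the parabolic energy inequality for the saturation error before applying Gronwall's lemma with zero initial data, and the same rescaling by $\upharpoonleft\!\Omega_\varepsilon\!\upharpoonright_3^{-1/2}$. The only (harmless) deviation is that you retain the lateral boundary remainder $\varepsilon\,(b(S_\varepsilon)-b(s_0))\,Q_\varepsilon$, which the paper's displayed condition \eqref{Nbc-W-e} writes with zero right-hand side, and you absorb it correctly via the trace-type inequality \eqref{ineq1}, so this refinement does not change the argument or the resulting $\mathcal{O}(\varepsilon^2)$ rates.
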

 \begin{remark}
  Since  inequalities \eqref{app-estimate1} and \eqref{app-estimate2} contain  integral norms  and the integrals taken  over the thin cylinder $\Omega_\varepsilon,$ these norms must be rescaled, namely divided by the square of the volume of  $\Omega_\varepsilon.$
 \end{remark}
\begin{proof}
First, we specify relations for the differences
\begin{equation}\label{differences-U-W}
U_\varepsilon := \mathfrak{P}_\varepsilon - P_\varepsilon, \qquad W_\varepsilon := s_0 - {S}_\varepsilon.
\end{equation}

It is easy to see from \eqref{bc-cond1}, \eqref{bc-cond1+}, \eqref{initial-cond} and  \eqref{bc-P-e}, \eqref{bc-S-e} that
\begin{gather}
  \label{bc-U-e}
  U_\varepsilon|_{(x,t)\in \Upsilon^T_\varepsilon(0)} = 0 \quad \text{and} \quad  U_\varepsilon|_{(x,t)\in \Upsilon^T_\varepsilon(\ell)} = 0,
\\
\label{bc-W-e}
  W_\varepsilon|_{(x,t)\in \Upsilon^T_\varepsilon(0)} = 0, \quad   W_\varepsilon|_{(x,t)\in \Upsilon^T_\varepsilon(\ell)} = 0, \quad W_\varepsilon|_{t=0} = 0.
  \end{gather}

Subtracting  \eqref{Ell} from \eqref{eq-P-e} and using the mean value theorem, we get
\begin{equation}\label{eq-U-e}
\nabla \boldsymbol{\cdot} \Big(\lambda({S}_\varepsilon(x,t)) \,  \mathbb{K}_\varepsilon(x)  \nabla U_\varepsilon  + W_\varepsilon \, \lambda'(\Theta^{(1)}_\varepsilon)\, \mathbb{K}_\varepsilon(x)  \nabla\mathfrak{P}_\varepsilon \Big) =
 \varepsilon^2 \mathcal{F}^{(1)}_\varepsilon
  \quad \text{in} \ \ \Omega_\varepsilon^T,
\end{equation}
where $\Theta^{(1)}_\varepsilon(x,t) = \theta_1 \, s_0(x_1,t) + (1-\theta_1)\, {S}_\varepsilon(x,t), \quad \theta_1 \in (0,1).$

Similarly, subtracting equation \eqref{Par} from \eqref{eq-S-e}, we deduce that
\begin{multline}\label{eq-W-e}
 \phi(x_1) \, \partial_t  W_\varepsilon -  \nabla \boldsymbol{\cdot} \Big(\Lambda({S}_\varepsilon) \,  \mathbb{K}_\varepsilon(x) \nabla W_\varepsilon + W_\varepsilon \, \Lambda'(\Theta^{(2)}_\varepsilon)\, \mathbb{K}_\varepsilon(x)  \nabla s_0
\\
 + \lambda_w({S}_\varepsilon) \,  \mathbb{K}_\varepsilon(x) \nabla U_\varepsilon
 + W_\varepsilon \, \lambda'_w(\Theta^{(3)}_\varepsilon)\, \mathbb{K}_\varepsilon(x)  \nabla\mathfrak{P}_\varepsilon
 \Big)
  =  \varepsilon^2\mathcal{F}^{(2)}_\varepsilon  \quad \text{in} \ \ \Omega_\varepsilon^T.
\end{multline}
The Neumann type boundary conditions for $U_\varepsilon$ and $W_\varepsilon$ on $\Gamma^T_\varepsilon$ are as follows
\begin{equation}\label{Nbc-U-e}
\Big(\lambda({S}_\varepsilon) \,  \mathbb{K}_\varepsilon(x)  \nabla U_\varepsilon +
 W_\varepsilon \, \lambda'(\Theta^{(1)}_\varepsilon)\, \mathbb{K}_\varepsilon(x)  \nabla\mathfrak{P}_\varepsilon
\Big) \boldsymbol{\cdot} {\boldsymbol{\nu}}_\varepsilon
  =  0 ,
\end{equation}
and
\begin{equation} \label{Nbc-W-e}
\Big(\Lambda({S}_\varepsilon) \,  \mathbb{K}_\varepsilon(x) \nabla W_\varepsilon
+ W_\varepsilon \, \Lambda'(\Theta^{(2)}_\varepsilon)\, \mathbb{K}_\varepsilon(x)  \nabla s_0
 + \lambda_w({S}_\varepsilon) \,  \mathbb{K}_\varepsilon(x) \nabla U_\varepsilon
 + W_\varepsilon \, \lambda'_w(\Theta^{(3)}_\varepsilon)\, \mathbb{K}_\varepsilon(x)  \nabla\mathfrak{P}_\varepsilon \Big) \boldsymbol{\cdot} {\boldsymbol{\nu}}_\varepsilon
=  0.
\end{equation}

Now, we multiply equation \eqref{eq-U-e} by $U_\varepsilon$ and integrate it over $\Omega_\varepsilon.$  Given \eqref{bc-U-e} and \eqref{Nbc-U-e}, we get
$$
\int_{\Omega_\varepsilon} \lambda({S}_\varepsilon) \,  \mathbb{K}_\varepsilon  \nabla U_\varepsilon \boldsymbol{\cdot} \nabla U_\varepsilon \, dx = - \int_{\Omega_\varepsilon} W_\varepsilon \, \lambda'(\Theta^{(1)}_\varepsilon)\, \mathbb{K}_\varepsilon  \nabla\mathfrak{P}_\varepsilon \boldsymbol{\cdot} \nabla U_\varepsilon \, dx -  \varepsilon^2 \int_{\Omega_\varepsilon} \mathcal{F}^{(1)}_\varepsilon
\, U_\varepsilon\, dx.
$$
Taking into account \eqref{cercitive}, \eqref{total mobillity}, \eqref{est-F} and the smoothness of $p_0$ and $u_2,$ from this equality we derive
\begin{equation}\label{in-U-e}
  \int_{\Omega_\varepsilon} |\nabla U_\varepsilon|^2\, dx \le C_1 \Big(\int_{\Omega_\varepsilon} |W_\varepsilon|\, |\nabla U_\varepsilon|\, dx +
  \varepsilon^2 \int_{\Omega_\varepsilon} |U_\varepsilon|\, dx\Big).
\end{equation}
Due to \eqref{bc-U-e} we have
$$
\int_{\Omega_\varepsilon} |U_\varepsilon|\, dx \le C_2 \, \varepsilon\, \sqrt{\int_{\Omega_\varepsilon}U^2_\varepsilon\, dx} \le C_3 \, \varepsilon\,
\sqrt{\int_{\Omega_\varepsilon}|\nabla U_\varepsilon|^2\, dx}.
$$
As a result, from \eqref{in-U-e} we get
\begin{equation}\label{in-U-e+1}
  \int_{\Omega_\varepsilon} |\nabla U_\varepsilon(x,t)|^2\, dx \le C_4 \Big( \varepsilon^6 + \int_{\Omega_\varepsilon} W^2_\varepsilon(x,t)  dx\Big)
  \quad \text{for any} \ \ t \in [0,T].
\end{equation}

Multiplying  \eqref{eq-W-e} by $W_\varepsilon$ and integrating  it over $\Omega_\varepsilon^\tau,$ where $\tau \in (0,T),$ and
taking  into account \eqref{cercitive}, \eqref{regular} and the assumption for the porosity,  we obtain
\begin{equation}\label{ine-W-e}
  \int\limits_{\Omega_\varepsilon} W^2_\varepsilon(x,\tau)\, dx + \int\limits_{\Omega_\varepsilon^\tau} |\nabla W_\varepsilon|^2 \, dxdt
        \le C_0 \bigg(|I_1(\varepsilon)| + |I_2(\varepsilon)| + \varepsilon^2 \int\limits_{\Omega_\varepsilon^\tau} \mathcal{F}^{(2)}_\varepsilon
\, W_\varepsilon\, dxdt\bigg),
\end{equation}
where
\begin{gather}
  I_1(\varepsilon) = - \int_{\Omega_\varepsilon^\tau}
  W_\varepsilon \Big(\Lambda'(\Theta^{(2)}_\varepsilon)\, \mathbb{K}_\varepsilon(x)  \nabla s_0
  +  \lambda'_w(\Theta^{(3)}_\varepsilon)\, \mathbb{K}_\varepsilon(x)  \nabla\mathfrak{P}_\varepsilon\Big)
   \boldsymbol{\cdot} \nabla W_\varepsilon \label{I-1}
  \, dxdt,
   \\
  I_2(\varepsilon) = - \int_{\Omega_\varepsilon^\tau}
  \lambda_w({S}_\varepsilon) \,  \mathbb{K}_\varepsilon(x) \nabla U_\varepsilon
   \boldsymbol{\cdot} \nabla W_\varepsilon  \, dx\, dt. \label{I-2}
\end{gather}
Using the smoothness of the coefficients of problem $(\mathbb{P}_\varepsilon\mathbb{S}_\varepsilon\!),$  with the help of  Cauchy's inequality with $\gamma >0 \ (ab \leq \gamma a^2 + \tfrac{b^2}{4\gamma}),$  we derive that
\begin{equation}\label{est-I+1}
|I_1(\varepsilon)| \le \gamma_1  \int_{0}^{\tau}\int_{\Omega_\varepsilon} | \nabla W_\varepsilon|^2  \, dx\, dt + \frac{C_1}{\gamma_1}
\int_{0}^{\tau}\int_{\Omega_\varepsilon} W_\varepsilon^2  \, dx\, dt,
\end{equation}
  \begin{align}\label{est-I+2}
|I_2(\varepsilon)|  \le  & \  \gamma_2  \int_{0}^{\tau}\int_{\Omega_\varepsilon} | \nabla W_\varepsilon|^2  \, dx\, dt + \frac{C_2}{\gamma_2}
\int_{0}^{\tau}\int_{\Omega_\varepsilon}|\nabla U_\varepsilon|^2  \, dx\, dt \notag
 \\
     &
     \stackrel{\eqref{in-U-e+1}}{\le}
\gamma_2  \int_{0}^{\tau}\int_{\Omega_\varepsilon} | \nabla W_\varepsilon|^2  \, dx\, dt + \frac{C_3}{\gamma_2}
\Big( \varepsilon^6 + \int_{0}^{\tau}\int_{\Omega_\varepsilon} W^2_\varepsilon \, dx dt\Big). 
  \end{align}

 Thanks to  \eqref{est-F} we have
 \begin{equation}\label{est-F-2}
   \bigg|\varepsilon^2 \int_{0}^{\tau}\int_{\Omega_\varepsilon} \mathcal{F}^{(2)}_\varepsilon
\, W_\varepsilon\, dx\, dt\bigg| \le C_4\Big(\varepsilon^6 + \int_{0}^{\tau}\int_{\Omega_\varepsilon} W^2_\varepsilon \, dx dt\Big).
 \end{equation}

Then, taking suitable $\gamma_1$ and $\gamma_2$ in  \eqref{est-I+1} and \eqref{est-I+2}, we derive from \eqref{ine-W-e} the inequality
\begin{equation}\label{ine-W-e+2}
  \int_{\Omega_\varepsilon} W^2_\varepsilon(x,\tau)\, dx + \int_{0}^{\tau} \int_{\Omega_\varepsilon} |\nabla W_\varepsilon|^2 \, dx\, dt
    \le C_5 \Big(\varepsilon^6 + \int_{0}^{\tau}\int_{\Omega_\varepsilon} W^2_\varepsilon(x,t) \, dx dt\Big)
\end{equation}
for any $\tau\in (0,T).$  Applying Gronwall's lemma to the inequality
$$
\int_{\Omega_\varepsilon} W^2_\varepsilon(x,\tau)\, dx  \le C_6 \Big(\varepsilon^6 + \int_{0}^{\tau}\int_{\Omega_\varepsilon} W^2_\varepsilon(x,t) \, dx dt\Big),
$$
we obtain
\begin{equation}\label{uniform-est-W-e}
  \max_{t\in [0,T]} \int_{\Omega_\varepsilon} W^2_\varepsilon(x,t)\, dx \le C_7 \, \varepsilon^6,
\end{equation}
where constant $C_7$ depends on $T.$

To complete the proof, it should be noted that inequalities \eqref{uniform-est-W-e} and \eqref{ine-W-e+2} imply  estimates \eqref{app-estimate2} and \eqref{app-estimate3}, and
\eqref{uniform-est-W-e} and \eqref{in-U-e+1} imply estimate \eqref{app-estimate1}.
\end{proof}

Denote by
$$
\langle u \rangle_{\varepsilon\varpi}(x_1)  := \frac{1}{\varepsilon^2 \upharpoonleft\!\!  \varpi \! \!\upharpoonright_2 } \int_{\varepsilon  \varpi} u(x_1, \overline{x}_1)\, d\overline{x}_1
$$
the mean of a function $u\in L^2(\Omega_\varepsilon)$ over the cross-section $\varepsilon  \varpi$ of the thin cylinder $\Omega_\varepsilon.$

Thanks to \eqref{uniq_1}, it is easy to verify that
\begin{equation}
  \langle \mathfrak{P}_\varepsilon \rangle_{\varepsilon\varpi} = p_0(x_1,t), \quad  \langle \partial_{x_1}\mathfrak{P}_\varepsilon \rangle_{\varepsilon\varpi} = \partial_{x_1}\langle \mathfrak{P}_\varepsilon \rangle_{\varepsilon\varpi} = \partial_{x_1}p_0(x_1,t). \label{m-P-e}
\end{equation}

The subsequent corollary is derived from Theorem~\ref{Th_1}.

\begin{corollary}[the case $\alpha = 1, \, \beta =0$]\label{Corollary3-1}
 Let $(P_\varepsilon, S_\varepsilon)$ be a weak solution to problem $(\mathbb{P}_\varepsilon\mathbb{S}_\varepsilon\!),$ and
 $(p_0, s_0)$ be a solution to the limit problem \eqref{limit_prob}.  Then the following estimates hold:
 \begin{gather}\label{est-P-p}
   \max_{t\in[0,T]}{\| \langle {P}_\varepsilon \rangle_{\varepsilon\varpi} - p_0\|}_{H^1(0, \ell)} + {\| \langle {P}_\varepsilon \rangle_{\varepsilon\varpi} - p_0\|}_{C\left([0, \ell]\times[0,T]\right)} \le C_1 \, \varepsilon^2,
   \\[2pt]
   {\| \langle {S}_\varepsilon \rangle_{\varepsilon\varpi} - s_0\|}_{L^2(0,T; H^1(0, \ell))} + {\| \langle {S}_\varepsilon \rangle_{\varepsilon\varpi} - s_0\|}_{L^2(0,T; C([0, \ell]))} \le C_2 \, \varepsilon^2. \label{est-S-s}
   \end{gather}
 \end{corollary}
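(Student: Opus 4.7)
The strategy is to reduce both estimates to the energy estimates in Theorem~\ref{Th_1} by exploiting the identity \eqref{m-P-e} together with the trivial fact that $\langle s_0 \rangle_{\varepsilon\varpi} = s_0$ (since $s_0$ depends only on $x_1$). Writing
\[
\langle P_\varepsilon \rangle_{\varepsilon\varpi} - p_0 = \langle P_\varepsilon - \mathfrak{P}_\varepsilon \rangle_{\varepsilon\varpi},
\qquad
\langle S_\varepsilon \rangle_{\varepsilon\varpi} - s_0 = \langle S_\varepsilon - s_0 \rangle_{\varepsilon\varpi},
\]
the task becomes to control the one-dimensional norms of cross-sectional averages of functions for which we already have estimates in $H^1(\Omega_\varepsilon)$ or $L^2(\Omega_\varepsilon)$.

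The main technical ingredient I would isolate as a lemma is the following Jensen-type inequality for the averaging operator: for every $v \in L^2(\Omega_\varepsilon)$,
\[
\int_0^\ell \bigl| \langle v \rangle_{\varepsilon\varpi}(x_1) \bigr|^2 \, dx_1 \le \frac{1}{\varepsilon^2 \upharpoonleft\!\!\varpi\!\!\upharpoonright_2} \int_{\Omega_\varepsilon} v^2(x)\, dx = \frac{\ell}{\upharpoonleft\!\!\Omega_\varepsilon\!\!\upharpoonright_3} \int_{\Omega_\varepsilon} v^2(x)\, dx,
\]
which follows by Cauchy--Schwarz on each cross-section $\varepsilon\varpi$ and Fubini. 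Since $\varepsilon\varpi$ is independent of $x_1$, the operators $\partial_{x_1}$ and $\langle \cdot \rangle_{\varepsilon\varpi}$ commute, so the same bound applies to $\partial_{x_1}\langle P_\varepsilon - \mathfrak{P}_\varepsilon\rangle_{\varepsilon\varpi} = \langle \partial_{x_1}(P_\varepsilon - \mathfrak{P}_\varepsilon)\rangle_{\varepsilon\varpi}$. Combining the lemma with estimate \eqref{app-estimate1} from Theorem~\ref{Th_1} yields, for each $t \in [0,T]$,
\[
\bigl\| \langle P_\varepsilon \rangle_{\varepsilon\varpi}(\cdot,t) - p_0(\cdot,t) \bigr\|_{H^1(0,\ell)} \le C \, \frac{1}{\sqrt{\upharpoonleft\!\!\Omega_\varepsilon\!\!\upharpoonright_3}} \| P_\varepsilon(\cdot,t) - \mathfrak{P}_\varepsilon(\cdot,t) \|_{H^1(\Omega_\varepsilon)} \le C_1\, \varepsilon^{2},
\]
and analogously, integrating in $t$ and using \eqref{app-estimate3},
\[
\bigl\| \langle S_\varepsilon \rangle_{\varepsilon\varpi} - s_0 \bigr\|_{L^2(0,T;H^1(0,\ell))} \le C_2\, \varepsilon^{2}.
\]

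For the uniform pointwise bounds I would invoke the one-dimensional Sobolev embedding $H^1(0,\ell) \hookrightarrow C([0,\ell])$, which gives $\|v\|_{C([0,\ell])} \le C\|v\|_{H^1(0,\ell)}$ for every $v \in H^1(0,\ell)$. Applied to $v = \langle P_\varepsilon \rangle_{\varepsilon\varpi}(\cdot,t) - p_0(\cdot,t)$ and maximized over $t \in [0,T]$, this upgrades the previous $H^1$-bound to the $C([0,\ell]\times[0,T])$-bound in \eqref{est-P-p} (the joint continuity in $(x_1,t)$ is inherited from the regularity of $p_0$ and, after averaging, from the regularity of $P_\varepsilon$; otherwise the statement is read as an essential supremum). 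The same embedding, integrated in $t$, produces the $L^2(0,T;C([0,\ell]))$-bound in \eqref{est-S-s} from the $L^2(0,T;H^1(0,\ell))$-bound just established.

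There is no real obstacle here; the entire argument is a careful bookkeeping of the rescaling factor $|\Omega_\varepsilon|_3 = \ell\, \varepsilon^2 \upharpoonleft\!\!\varpi\!\!\upharpoonright_2$, which is precisely the factor by which the norms in Theorem~\ref{Th_1} are rescaled. The only subtlety worth flagging is the commutation of $\partial_{x_1}$ with $\langle\cdot\rangle_{\varepsilon\varpi}$, which relies on the fact that the cylinder is rectilinear (the cross-section is $x_1$-independent); for a non-cylindrical thin domain an extra boundary term would appear and a different argument would be required.
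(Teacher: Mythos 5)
Your proposal is correct and follows essentially the same route as the paper: the decomposition via \eqref{m-P-e}, the Cauchy--Bunyakovsky--Schwarz (Jensen-type) bound of the cross-sectional average by the rescaled $L^2(\Omega_\varepsilon)$ norm, the application of the estimates of Theorem~\ref{Th_1}, and the embedding $H^1(0,\ell)\hookrightarrow C([0,\ell])$ are exactly the paper's steps. The only (harmless) difference is that you explicitly cite \eqref{app-estimate3} for the gradient part of the saturation estimate, which is indeed what is needed there.
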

\begin{proof}  Using  \eqref{m-P-e} and the Cauchy--Buniakovskii--Schwarz inequality, we deduce
\begin{align*}
{ \| \langle {P}_\varepsilon \rangle_{\varepsilon\varpi}(\cdot,t) - p_0(\cdot,t)\|^2}_{L^2(0, \ell)} = & \  \frac{1}{\varepsilon^4 \upharpoonleft\!\!  \varpi \! \!\upharpoonright^2_2 }\, \int_{0}^{\ell} \bigg(\int_{\varepsilon  \varpi} \big(P_\varepsilon - \mathfrak{P}_\varepsilon\big)\,  d\overline{x}_1\bigg)^2dx_1
 \\
  \le & \  \frac{1}{\varepsilon^2 \upharpoonleft\!\!  \varpi \! \!\upharpoonright_2 }\, \int_{0}^{\ell} \int_{\varepsilon  \varpi} \big(P_\varepsilon - \mathfrak{P}_\varepsilon\big)^2\,  d\overline{x}_1dx_1
  \\
&  \stackrel{\eqref{app-estimate1}}{\le}  \frac{1}{\varepsilon^2 \upharpoonleft \!\!  \varpi \! \!\upharpoonright_2 }\,
  \upharpoonleft \! \Omega_\varepsilon\! \! \upharpoonright_3 \, \tilde{C}^2_0 \, \varepsilon^{4} = C^2_1 \, \varepsilon^4
  \end{align*}
for any $ t\in [0,T].$ Similarly, we demonstrate that
$$
\max_{t\in[0,T]}{ \| \partial_{x_1}\langle {P}_\varepsilon \rangle_{\varepsilon\varpi} - \partial_{x_1}p_0\|}_{L^2(0, \ell)} \le C_1 \, \varepsilon^2.
$$
Considering  the continuous embedding of the space $H^1(0,\ell)$ into $C([0,\ell]),$ we obtain \eqref{est-P-p}.
In the same way, but now using \eqref{app-estimate2}, we derive \eqref{est-S-s}.
\end{proof}

\section{The case  $\alpha =1$ and $\beta <2,$ $\beta \neq  0$}\label{Sect:4}

Now the principal challenge is to understand how the appearance of the value $\varepsilon^\beta$ into the absolute permeability
tensor~\eqref{permeability} affects the overall result. To achieve this, we propose the use of the following approximating functions:
\begin{equation}\label{Anz-P-be}
 \mathfrak{P}_\varepsilon(x,t) = p_0(x_1,t)  + \varepsilon^{2 - \beta} u_{2- \beta}\Big(x_1, \dfrac{\overline{x}_1}{\varepsilon}, t \Big) \quad \text{and}
\quad
 \mathfrak{S}_\varepsilon(x,t) = s_0(x_1,t),
\end{equation}
where $(p_0, s_0)$ is  a solution to problem \eqref{limit_prob} and  $u_{2 - \beta}$  is a solution to problem \eqref{Neumann u2}.

\begin{theorem}[the case $\alpha =1,$ $\beta < 2$ and $\beta \neq  0$]\label{Th_1+}
Assume that, in addition to the main assumptions made in Section~\ref{Sec:Statement},
condition \eqref{com-cond-1} is  satisfied.
Then,  there are positive constants $\tilde{C}_0,$ $\tilde{C}_1,$ $\tilde{C}_2$ and $\varepsilon_0$ such that for all $\varepsilon\in (0, \varepsilon_0)$
the  differences $U_\varepsilon := \mathfrak{P}_\varepsilon - P_\varepsilon$ and $W_\varepsilon := \mathfrak{S}_\varepsilon - {S}_\varepsilon$
 satisfy the estimates
   \begin{equation}\label{app-estimate-U-e-be}
 \tfrac{1}{\sqrt{\upharpoonleft  \Omega_\varepsilon \! \upharpoonright_3}}\,  \max_{t\in \times [0,T]}
 \sqrt{\int_{\Omega_\varepsilon} |\partial_{x_1} U_\varepsilon(\cdot,t)|^2\, dx + \varepsilon^\beta \, \int_{\Omega_\varepsilon}
|\nabla_{\overline{x}_1} U_\varepsilon(\cdot,t)|^2\, dx} \le \tilde{C}_0 \, \varepsilon^{1 - \frac{\beta}{2}},
 \end{equation}
 \begin{equation}\label{app-estimate-W-ep-be}
\tfrac{1}{\sqrt{\upharpoonleft  \Omega_\varepsilon \! \upharpoonright_3}}\,  \max_{t\in \times [0,T]}
{\| W_\varepsilon(\cdot,t) \|}_{L^2(\Omega_\varepsilon)} \le \tilde{C}_1 \,  \varepsilon^{1 - \frac{\beta}{2}},
\end{equation}
and
\begin{equation}\label{app-estimateW-ep2-be}
\tfrac{1}{\sqrt{\upharpoonleft  \Omega_\varepsilon \! \upharpoonright_3}}\,
\sqrt{ \int_{0}^{T} \int_{\Omega_\varepsilon} |\partial_{x_1} W_\varepsilon|^2 \, dx dt
+ \varepsilon^\beta \int_{0}^{T} \int_{\Omega_\varepsilon} |\nabla_{\overline{x}_1} W_\varepsilon|^2 \, dx dt} \le
 \tilde{C}_2\, \varepsilon^{1 - \frac{\beta}{2}}.
\end{equation}
 \end{theorem}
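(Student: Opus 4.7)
The plan is to follow the same template as the proof of Theorem~\ref{Th_1}, but tracking carefully the anisotropic scaling built into $\mathbb{K}_\varepsilon$. First I would verify that, by construction, $\mathfrak{P}_\varepsilon$ and $\mathfrak{S}_\varepsilon$ solve the equations and boundary conditions of $(\mathbb{P}_\varepsilon\mathbb{S}_\varepsilon\!)$ up to residuals $\varepsilon^{2-\beta}\mathcal{F}^{(i)}_\varepsilon$ with $\mathcal{F}^{(i)}_\varepsilon$ bounded uniformly in $\varepsilon$. The key identity is that substituting \eqref{Anz-P-be} into $\lambda(s_0)\mathbb{K}_\varepsilon\nabla\mathfrak{P}_\varepsilon$ converts the transversal block $\varepsilon^\beta\mathbf{K}\nabla_{\overline{x}_1}\mathfrak{P}_\varepsilon$ into $\varepsilon\,\mathbf{K}\nabla_{\overline{\xi}_1}u_{2-\beta}$; its $\nabla_{\overline{x}_1}$-divergence exactly cancels $\widehat{Q}$ via \eqref{Neumann u2}, so only a longitudinal residual of the stated order survives. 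The lateral boundary condition matches identically (by the same cancellation on $\Gamma^T_\varepsilon$), while the Dirichlet conditions on $\Upsilon_\varepsilon(0),\Upsilon_\varepsilon(\ell)$ and the initial data are preserved since $u_{2-\beta}$ vanishes near $x_1=0,\ell$ and at $t=0$.

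Next, subtracting the equations for the exact and approximating solutions and linearising by the mean-value theorem yields equations for $U_\varepsilon$ and $W_\varepsilon$ structurally identical to \eqref{eq-U-e}--\eqref{Nbc-W-e}, but with source terms $\varepsilon^{2-\beta}\mathcal{F}^{(i)}_\varepsilon$ and homogeneous boundary/initial data. Testing the elliptic equation with $U_\varepsilon$ and using the anisotropic coercivity
\[
\lambda(S_\varepsilon)\,\mathbb{K}_\varepsilon\nabla U_\varepsilon\boldsymbol{\cdot}\nabla U_\varepsilon\ge c\bigl(|\partial_{x_1}U_\varepsilon|^2+\varepsilon^\beta|\nabla_{\overline{x}_1}U_\varepsilon|^2\bigr)
\]
produces the natural anisotropic energy of $U_\varepsilon$ on the left-hand side. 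The crucial step is to bound the coupling term $\int_{\Omega_\varepsilon}W_\varepsilon\,\lambda'(\Theta^{(1)}_\varepsilon)\,\mathbb{K}_\varepsilon\nabla\mathfrak{P}_\varepsilon\boldsymbol{\cdot}\nabla U_\varepsilon\,dx$: since $\mathbb{K}_\varepsilon\nabla\mathfrak{P}_\varepsilon$ has longitudinal component of order one but transversal component of order $\varepsilon$, splitting by directions and inserting the factorisation $\varepsilon=\varepsilon^{1-\beta/2}\cdot\varepsilon^{\beta/2}$ in the transversal pairing, followed by Young's inequality with small parameter, produces absorbable terms on $|\partial_{x_1}U_\varepsilon|^2$ and $\varepsilon^\beta|\nabla_{\overline{x}_1}U_\varepsilon|^2$ plus a tame $C\varepsilon^{2-\beta}\|W_\varepsilon\|_{L^2(\Omega_\varepsilon)}^2$. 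The residual term $\varepsilon^{2-\beta}\|\mathcal{F}^{(1)}_\varepsilon\|_{L^2}\|U_\varepsilon\|_{L^2}$, combined with Poincar\'e in $x_1$ and $\|\mathcal{F}^{(1)}_\varepsilon\|_{L^2(\Omega_\varepsilon)}^2\le C|\Omega_\varepsilon|\sim\varepsilon^2$, contributes $O(\varepsilon^{6-2\beta})\le O(\varepsilon^{4-\beta})$ for $\beta<2$, yielding
\[
\int_{\Omega_\varepsilon}\!\bigl(|\partial_{x_1}U_\varepsilon|^2+\varepsilon^\beta|\nabla_{\overline{x}_1}U_\varepsilon|^2\bigr)dx\le C\bigl(\varepsilon^{4-\beta}+\|W_\varepsilon\|_{L^2(\Omega_\varepsilon)}^2\bigr).
\]

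For the parabolic part I would multiply the equation for $W_\varepsilon$ by $W_\varepsilon$ and integrate over $\Omega_\varepsilon^\tau$, $\tau\in(0,T]$. The diffusion term produces $c\,\delta_2$ times the time-integrated anisotropic energy of $W_\varepsilon$; the coupling $\lambda_w(S_\varepsilon)\mathbb{K}_\varepsilon\nabla U_\varepsilon\boldsymbol{\cdot}\nabla W_\varepsilon$ is split by directions and absorbed using the elliptic estimate just obtained; and the remaining terms of the form $W_\varepsilon\,\Lambda'(\Theta^{(2)}_\varepsilon)\mathbb{K}_\varepsilon\nabla s_0$ and $W_\varepsilon\,\lambda'_w(\Theta^{(3)}_\varepsilon)\mathbb{K}_\varepsilon\nabla\mathfrak{P}_\varepsilon$ contribute only $C\int_0^\tau\|W_\varepsilon\|_{L^2(\Omega_\varepsilon)}^2\,dt$ after the same anisotropic Young split. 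Tuning the Young constants, one arrives at
\[
\int_{\Omega_\varepsilon}W_\varepsilon^2(x,\tau)\,dx+c\int_0^\tau\!\!\int_{\Omega_\varepsilon}\bigl(|\partial_{x_1}W_\varepsilon|^2+\varepsilon^\beta|\nabla_{\overline{x}_1}W_\varepsilon|^2\bigr)dxdt\le C\Bigl(\varepsilon^{4-\beta}+\int_0^\tau\!\|W_\varepsilon(\cdot,t)\|_{L^2(\Omega_\varepsilon)}^2dt\Bigr),
\]
and Gronwall's lemma closes the bound $\max_{\tau\in[0,T]}\|W_\varepsilon(\cdot,\tau)\|_{L^2(\Omega_\varepsilon)}^2\le C\varepsilon^{4-\beta}$. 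Substituting this back into the elliptic estimate and dividing both sides under the square root by $\sqrt{|\Omega_\varepsilon|}\sim\varepsilon$ yields the three asserted bounds at the scaled rate $\varepsilon^{1-\beta/2}$.

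The main obstacle is the consistent anisotropic bookkeeping: every mixed bilinear term involving $\nabla U_\varepsilon$ or $\nabla W_\varepsilon$ must be decomposed into longitudinal and transversal pieces, with the correct powers of $\varepsilon^{\beta/2}$ inserted before Young's inequality so that only the natural anisotropic energies appear on the right-hand side. The assumption $\beta<2$ enters precisely where one requires $\varepsilon^{6-2\beta}\le\varepsilon^{4-\beta}$ and $\varepsilon^{2-\beta}\to 0$, keeping the residual and cross-coupling contributions within the $\varepsilon^{4-\beta}$ error budget.
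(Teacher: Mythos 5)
Your proposal is correct and follows essentially the same route as the paper's proof: the same residual computation of order $\mathcal{O}(\varepsilon^{2-\beta})$ for the ansatz \eqref{Anz-P-be}, the same anisotropic energy estimates with coercivity constant $\varepsilon^\beta$ in the transversal directions, the same directional splitting of the coupling terms before Young's inequality, and the same Gronwall-plus-rescaling conclusion giving the rate $\varepsilon^{1-\beta/2}$. The only differences are cosmetic (e.g.\ your explicit factorisation $\varepsilon=\varepsilon^{1-\beta/2}\varepsilon^{\beta/2}$ versus the paper's absorption of $\varepsilon^{2}|\nabla_{\overline{x}_1}U_\varepsilon|^2$ into $\varepsilon^\beta|\nabla_{\overline{x}_1}U_\varepsilon|^2$ using $\beta<2$, and your slightly sharper $\varepsilon^{6-2\beta}$ bound on the elliptic residual, which you correctly note is dominated by $\varepsilon^{4-\beta}$).
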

\begin{proof}
Compared to the proof of the previous theorem, the proof of this theorem requires certain changes, as shown below.

Upon substitution of  functions \eqref{Anz-P-be}  into problem $(\mathbb{P}_\varepsilon\mathbb{S}_\varepsilon\!),$ identical relations are found as those presented in \eqref{eq-P-e} -- \eqref{bc-S-e}. However, the orders of the residuals in the  differential equations \eqref{eq-P-e} and \eqref{eq-S-e} will differ; they will now be of order $\mathcal{O}(\varepsilon^{2-\beta}).$
Also $U_\varepsilon $ and $W_\varepsilon $
will satisfy relations \eqref{bc-U-e} -- \eqref{Nbc-W-e} with the corresponding residuals in \eqref{eq-U-e} and \eqref{eq-W-e}.
Multiplying the corresponding equation \eqref{eq-U-e} by $U_\varepsilon$ and integrating it over $\Omega_\varepsilon,$ we get
\begin{equation}\label{U-ine1-be}
\int_{\Omega_\varepsilon} \lambda({S}_\varepsilon) \,  \mathbb{K}_\varepsilon  \nabla U_\varepsilon \boldsymbol{\cdot} \nabla U_\varepsilon \, dx = - \int_{\Omega_\varepsilon} W_\varepsilon \, \lambda'(\Theta^{(1)}_\varepsilon)\, \mathbb{K}_\varepsilon  \nabla\mathfrak{P}_\varepsilon \boldsymbol{\cdot} \nabla U_\varepsilon \, dx
     -  \varepsilon^{2 - \beta} \int_{\Omega_\varepsilon} \mathcal{F}^{(1)}_{\varepsilon}
\, U_\varepsilon\, dx.
\end{equation}
Taking into account the form of the  absolute permeability tensor \eqref{permeability} for this case,
\eqref{cercitive} and \eqref{total mobillity}, the left-hand side in \eqref{U-ine1-be} is estimated from below by the sum of two integrals
$
c \Big(\int_{\Omega_\varepsilon} |\partial_{x_1} U_\varepsilon|^2\, dx + \varepsilon^\beta \, \int_{\Omega_\varepsilon}
|\nabla_{\overline{x}_1} U_\varepsilon|^2\, dx\Big).
$
Thus, we deduce from \eqref{U-ine1-be} the inequality
\begin{equation}\label{Uin2-be}
  c \bigg(\int_{\Omega_\varepsilon} |\partial_{x_1} U_\varepsilon|^2\, dx + \varepsilon^\beta \, \int_{\Omega_\varepsilon}
|\nabla_{\overline{x}_1} U_\varepsilon|^2\, dx\bigg)
 \le \bigg|\int_{\Omega_\varepsilon} W_\varepsilon \, \lambda'(\Theta^{(1)}_\varepsilon)\, \mathbb{K}_\varepsilon  \nabla\mathfrak{P}_\varepsilon \boldsymbol{\cdot} \nabla U_\varepsilon \, dx\bigg|
  +  \varepsilon^{2 - \beta} \int_{\Omega_\varepsilon} |\mathcal{F}^{(1)}_{\varepsilon}|\, |U_\varepsilon|\, dx.
\end{equation}
The estimation of the right side of \eqref{Uin2-be} will now be undertaken in the following manner. The first integral is less than
\begin{multline*}
  C_2 \int_{\Omega_\varepsilon} |W_\varepsilon| \, \Big|\big(k_1\, \partial_{x_1}p_0 + \varepsilon^{2-\beta} \partial_{x_1}u_{2-\beta}\big) \partial_{x_1}U_\varepsilon + \varepsilon (\mathbf{K} \nabla_{\overline{\xi}_1}u_{2-\beta}\big)\cdot \nabla_{\overline{x}_1} U_\varepsilon\Big)\bigg| dx
   \\
   \le C_3 \bigg(\gamma_1 \int_{\Omega_\varepsilon}|\partial_{x_1}U_\varepsilon|^2 dx + \frac{1}{\gamma_1}\int_{\Omega_\varepsilon} W^2_\varepsilon\, dx + \int_{\Omega_\varepsilon}|W_\varepsilon|^2 dx + \varepsilon^2 \int_{\Omega_\varepsilon}|\nabla_{\overline{x}_1}U_\varepsilon|^2 dx \bigg)
\end{multline*}
for any $ \gamma_1>0,$
while the second one based on \eqref{est-F} is less than
$$
C_1 \varepsilon^{2 - \beta} \frac{1}{2} \int_{\Omega_\varepsilon}\Big(1+ U^2_\varepsilon \Big) dx \le
C_4 \bigg(\varepsilon^{4 - \beta} +  \varepsilon^{2 - \beta}\int_{\Omega_\varepsilon}|\partial_{x_1} U_\varepsilon|^2 \, dx\bigg) .
$$
By choosing an appropriate value of $\gamma_1$ and $\varepsilon_1,$  we derive from \eqref{Uin2-be}  the  inequality
\begin{equation}\label{U-in3-be}
  \int_{\Omega_\varepsilon} |\partial_{x_1} U_\varepsilon|^2\, dx + \varepsilon^\beta \, \int_{\Omega_\varepsilon}
|\nabla_{\overline{x}_1} U_\varepsilon|^2\, dx \le C_1 \bigg( \varepsilon^{4 - \beta} +  \int_{\Omega_\varepsilon} W^2_\varepsilon\, dx\bigg)
\end{equation}
for any $\varepsilon \in (0, \varepsilon_1).$

Now the analogue of inequality \eqref{ine-W-e}  looks like this:
\begin{multline}\label{ine-W-e-be+}
  \int_{\Omega_\varepsilon} W^2_\varepsilon(x,\tau)\, dx +
     \int_{0}^{\tau} \int_{\Omega_\varepsilon}\Big(
     |\partial_{x_1} W_\varepsilon|^2 + \varepsilon^\beta |\nabla_{\overline{x}_1} W_\varepsilon|^2\Big) dx\, dt
     \\
    \le C_0 \bigg(|I_1(\varepsilon)| + |I_2(\varepsilon)| + \varepsilon^{2-\beta} \int_{0}^{\tau}\int_{\Omega_\varepsilon} \mathcal{F}^{(2)}_\varepsilon \, W_\varepsilon\, dx\, dt\bigg),
\end{multline}
where $I_1$ and $I_2$ are determined by \eqref{I-1} and \eqref{I-2} respectively, $\tau \in (0,T).$  We will proceed to evaluate the integrals on the right-hand side of \eqref{ine-W-e-be+} as follows:
\begin{align*}
C_0\,  |I_1(\varepsilon)|&\le C_1 \int_{0}^{\tau} \int_{\Omega_\varepsilon}|W_\varepsilon|\, \Big(|\partial_{x_1} W_\varepsilon| + \varepsilon
  |\nabla_{\overline{x}_1} W_\varepsilon|\Big) dx\, dt
  \\
   & \le \gamma_2 \int_{0}^{\tau}\int_{\Omega_\varepsilon}|\partial_{x_1}W_\varepsilon|^2 dx\,dt + \frac{C_2}{\gamma_2}\int_{0}^{\tau}\int_{\Omega_\varepsilon} W^2_\varepsilon\, dx\,dt
   \\ & \  \ \ + C_3 \int_{0}^{\tau}\int_{\Omega_\varepsilon} W^2_\varepsilon\, dx\,dt
   + C_3 \, \varepsilon^2 \int_{0}^{\tau}\int_{\Omega_\varepsilon} |\nabla_{\overline{x}_1} W_\varepsilon|^2 \, dx\,dt \quad \text{for any} \ \ \gamma_2>0,
\end{align*}
\begin{align*}
C_0\,  |I_2(\varepsilon)|&\le C_4 \int_{0}^{\tau} \int_{\Omega_\varepsilon}\Big(|\partial_{x_1}U_\varepsilon|\, |\partial_{x_1}W_\varepsilon|
  + \varepsilon^\beta |\nabla_{\overline{x}_1} U_\varepsilon|\,  |\nabla_{\overline{x}_1} W_\varepsilon|\Big) dx\,dt
  \\
  & \le \gamma_3 \int_{0}^{\tau}\int_{\Omega_\varepsilon}|\partial_{x_1}W_\varepsilon|^2 dx\,dt + \frac{C_5}{\gamma_3}\int_{0}^{\tau}\int_{\Omega_\varepsilon} |\partial_{x_1}U_\varepsilon|^2 \, dx\,dt
  \\ &
 \ \ \ + \varepsilon^\beta \gamma_4 \int_{0}^{\tau}\int_{\Omega_\varepsilon} |\nabla_{\overline{x}_1} W_\varepsilon|^2 dx\, dt +
\varepsilon^\beta\,  \frac{C_6}{\gamma_4} \int_{0}^{\tau}\int_{\Omega_\varepsilon} |\nabla_{\overline{x}_1} U_\varepsilon|^2 dx\, dt
\end{align*}
for any  $\gamma_3>0, \ \ \gamma_4 >0,$
$$
C_0\, \bigg|\varepsilon^{2-\beta} \int_{0}^{\tau}\int_{\Omega_\varepsilon} \mathcal{F}^{(2)}_\varepsilon \, W_\varepsilon\, dx\, dt\bigg|
\stackrel{\eqref{est-F}}{\le}
C_7\bigg(\varepsilon^{6 - 2\beta} + \int_{0}^{\tau}\int_{\Omega_\varepsilon} W^2_\varepsilon\, dx\,dt\bigg).
$$
Choosing the corresponding $\gamma_2, \gamma_3, \gamma_4$ and $\varepsilon_0 > 0$ $(\varepsilon_0 < \varepsilon_1),$  we derive from \eqref{ine-W-e-be+} that for arbitrary $\varepsilon \in (0, \varepsilon_0)$ the following estimate holds:
\begin{multline}\label{est-W-e-be}
  \int_{\Omega_\varepsilon} W^2_\varepsilon(x,\tau)\, dx +
     \int_{0}^{\tau} \int_{\Omega_\varepsilon}\Big(
     |\partial_{x_1} W_\varepsilon|^2 + \varepsilon^\beta |\nabla_{\overline{x}_1} W_\varepsilon|^2\Big) dx\, dt
     \\
    \le C_0 \bigg(\int_{0}^{\tau}\int_{\Omega_\varepsilon} W^2_\varepsilon\, dx\,dt + \varepsilon^\beta \int_{0}^{\tau}\int_{\Omega_\varepsilon} |\nabla_{\overline{x}_1} U_\varepsilon|^2 dx\, dt + \varepsilon^{6 - 2\beta}\bigg)
    \\ \stackrel{\eqref{U-in3-be}}{\le} C_1 \bigg( \varepsilon^{4 - \beta} + \int_{0}^{\tau}\int_{\Omega_\varepsilon} W^2_\varepsilon\, dx\,dt\bigg).
\end{multline}

The remaining step is to apply Gronwall's  lemma, as presented at the end of Theorem~\ref{Th_1}, in order to derive  estimates \eqref{app-estimate-U-e-be} -- \eqref{app-estimateW-ep2-be}.
\end{proof}

Similarly, the proof of Corollary~\ref{Corollary3-1}  may also be applied to obtain the following corollary;
however, now the asymptotic estimates from Theorem~\ref{Th_1+} should be applied.
\begin{corollary}[the case $\alpha =1,$ $\beta < 2$ and $\beta \neq  0$]\label{Corollary4-1}
 Let $(P_\varepsilon, S_\varepsilon)$ be a weak solution to problem $(\mathbb{P}_\varepsilon\mathbb{S}_\varepsilon\!),$ and
 $(p_0, s_0)$ be a solution to the limit problem \eqref{limit_prob}.  Then the following asymptotic estimates hold:
 \begin{gather}\label{est-P-p}
   \max_{t\in[0,T]}{\| \langle {P}_\varepsilon \rangle_{\varepsilon\varpi} - p_0\|}_{H^1(0, \ell)} + {\| \langle {P}_\varepsilon \rangle_{\varepsilon\varpi} - p_0\|}_{C\left([0, \ell]\times[0,T]\right)} \le C_1 \, \varepsilon^{1 - \frac{\beta}{2}},
   \\[2pt]
   {\| \langle {S}_\varepsilon \rangle_{\varepsilon\varpi} - s_0\|}_{L^2(0,T; H^1(0, \ell))} + {\| \langle {S}_\varepsilon \rangle_{\varepsilon\varpi} - s_0\|}_{L^2(0,T; C([0, \ell]))} \le C_2 \, \varepsilon^{1 - \frac{\beta}{2}}. \label{est-S-s}
   \end{gather}
 \end{corollary}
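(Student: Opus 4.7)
The proof will follow the same template as Corollary~\ref{Corollary3-1}, with Theorem~\ref{Th_1} replaced by Theorem~\ref{Th_1+} and the rate $\varepsilon^{2}$ replaced by $\varepsilon^{1-\beta/2}$. The essential observation is that, because of the zero-mean condition \eqref{uniq_1} on $u_{2-\beta}$ and the fact that the reference cross-section $\varpi$ is independent of~$x_1$, the identities
\begin{equation*}
\langle \mathfrak{P}_\varepsilon\rangle_{\varepsilon\varpi}(x_1,t) = p_0(x_1,t), \qquad \langle \partial_{x_1}\mathfrak{P}_\varepsilon\rangle_{\varepsilon\varpi}(x_1,t) = \partial_{x_1}p_0(x_1,t)
\end{equation*}
still hold for the ansatz~\eqref{Anz-P-be}; differentiating the cross-sectional integral and interchanging with $\partial_{x_1}$ is lawful since $\varpi$ is $x_1$-independent, and by \eqref{uniq_1} the derivative $\partial_{x_1}u_{2-\beta}$ still has zero mean over $\Upsilon(x_1)=\varpi$. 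Similarly, because $\mathfrak{S}_\varepsilon = s_0$ in this case, $\langle\mathfrak{S}_\varepsilon\rangle_{\varepsilon\varpi} = s_0$ and $\langle\partial_{x_1}\mathfrak{S}_\varepsilon\rangle_{\varepsilon\varpi} = \partial_{x_1}s_0$ trivially.

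The plan for \eqref{est-P-p} is then to apply Cauchy--Bunyakovskii--Schwarz, as in Corollary~\ref{Corollary3-1}, to write
\begin{equation*}
\|\langle P_\varepsilon\rangle_{\varepsilon\varpi}(\cdot,t) - p_0(\cdot,t)\|_{L^2(0,\ell)}^2 \le \tfrac{1}{\varepsilon^2\upharpoonleft\!\varpi\!\upharpoonright_2}\|P_\varepsilon(\cdot,t) - \mathfrak{P}_\varepsilon(\cdot,t)\|_{L^2(\Omega_\varepsilon)}^2,
\end{equation*}
and analogously for $\partial_{x_1}\langle P_\varepsilon\rangle_{\varepsilon\varpi} - \partial_{x_1}p_0 = \langle \partial_{x_1}U_\varepsilon\rangle_{\varepsilon\varpi}$. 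Since $U_\varepsilon$ vanishes on $\Upsilon_\varepsilon(0)$ and $\Upsilon_\varepsilon(\ell)$, the one-dimensional Poincaré inequality along $x_1$ gives $\|U_\varepsilon(\cdot,t)\|_{L^2(\Omega_\varepsilon)} \le C\|\partial_{x_1}U_\varepsilon(\cdot,t)\|_{L^2(\Omega_\varepsilon)}$. Using $\upharpoonleft\!\Omega_\varepsilon\!\upharpoonright_3 = \ell\,\varepsilon^2\upharpoonleft\!\varpi\!\upharpoonright_2$ to cancel the $\varepsilon^{-2}$ factor, and plugging in estimate~\eqref{app-estimate-U-e-be} (from which only the $\partial_{x_1}$ part is needed), we arrive at the $L^2(0,\ell)$ bound of order $\varepsilon^{1-\beta/2}$ for both $\langle P_\varepsilon\rangle_{\varepsilon\varpi} - p_0$ and its $x_1$-derivative, uniformly in $t\in[0,T]$. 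The sup-norm part of \eqref{est-P-p} then follows from the continuous embedding $H^1(0,\ell)\hookrightarrow C([0,\ell])$.

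For the saturation estimate \eqref{est-S-s} the same device works: expressing $\langle S_\varepsilon\rangle_{\varepsilon\varpi} - s_0 = -\langle W_\varepsilon\rangle_{\varepsilon\varpi}$ and $\partial_{x_1}\langle S_\varepsilon\rangle_{\varepsilon\varpi} - \partial_{x_1}s_0 = -\langle\partial_{x_1}W_\varepsilon\rangle_{\varepsilon\varpi}$, then applying Cauchy--Bunyakovskii--Schwarz over the cross-section and integrating in $t$, reduces the problem to estimates~\eqref{app-estimate-W-ep-be} and~\eqref{app-estimateW-ep2-be} from Theorem~\ref{Th_1+}. Again Poincaré along $x_1$ absorbs the $L^2(\Omega_\varepsilon)$-norm of $W_\varepsilon$ into the $\partial_{x_1}W_\varepsilon$ part supplied by \eqref{app-estimateW-ep2-be}, and the factor $\upharpoonleft\!\Omega_\varepsilon\!\upharpoonright_3^{1/2}$ present in Theorem~\ref{Th_1+} cancels the $\varepsilon^{-1}$ arising in Cauchy--Schwarz.

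There is no real obstacle: the only mildly delicate point is that Theorem~\ref{Th_1+} delivers an anisotropic energy norm in which the transversal gradient $\nabla_{\overline{x}_1}U_\varepsilon$ carries an unfavourable $\varepsilon^{\beta}$-weight, but this is irrelevant here because the cross-sectional average annihilates the dependence on $\overline{x}_1$ in the leading terms, and only the longitudinal derivative $\partial_{x_1}U_\varepsilon$ (with the optimal rate $\varepsilon^{1-\beta/2}$) enters the estimates for $\langle P_\varepsilon\rangle_{\varepsilon\varpi}$ and its $x_1$-derivative. Consequently the bound $\varepsilon^{1-\beta/2}$ passes cleanly to the corollary.
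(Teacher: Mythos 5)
Your proposal is correct and follows essentially the same route as the paper, whose proof of Corollary~\ref{Corollary4-1} consists in repeating the argument of Corollary~\ref{Corollary3-1} (cross-sectional averaging via the zero-mean condition \eqref{uniq_1}, Cauchy--Bunyakovskii--Schwarz over $\varepsilon\varpi$, and the embedding $H^1(0,\ell)\hookrightarrow C([0,\ell])$) with the estimates of Theorem~\ref{Th_1+} in place of those of Theorem~\ref{Th_1}. Your explicit use of the one-dimensional Poincar\'e inequality in $x_1$ (legitimate because $U_\varepsilon$ and $W_\varepsilon$ vanish on $\Upsilon_\varepsilon(0)$ and $\Upsilon_\varepsilon(\ell)$) correctly supplies the $L^2(\Omega_\varepsilon)$-bound on $U_\varepsilon$ that Theorem~\ref{Th_1+}, unlike Theorem~\ref{Th_1}, does not state directly, and is consistent with how the paper itself argues inside the proof of that theorem.
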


\begin{remark}\label{remark-4-1}
The proofs of the estimates in Theorem~\ref{Th_1+} and in Corollary~\ref{Corollary4-1}  also include the case where $\beta=0.$ However, the order of these estimates is worse than those obtained in Theorem~\ref{Th_1} and Corollary~\ref{Corollary3-1}. Therefore, the subcase $\alpha =1$ and $\beta =0$ was  considered separately. In order to obtain better estimates in Theorem~\ref{Th_1+}, especially for value of  $\beta \in (1,2),$ additional terms of the asymptotics should be constructed. However, for nonlinear problems, as we have shown in \cite{Mel-Roh_JMAA-2024}, this requires additional cumbersome calculations, and for some problems it is almost impossible. In our case, to justify  one-dimensional models, as follows from estimates \eqref{est-P-p}  and \eqref{est-S-s}, this is sufficient.
 \end{remark}


\section{The case $\alpha > \beta -1,$ $\alpha > 1$}\label{Sect:5}

In this case we also consider two subcase.

\subsection{Subcase $\beta \neq  0$}
Now  we  propose the following approximation functions:
\begin{equation}\label{Anz-P-al-be}
 \mathfrak{P}_\varepsilon(x,t) = p_0(x_1,t) + \varepsilon^{\alpha  -1} p_{\alpha  -1}(x_1,t)  + \varepsilon^{\alpha - \beta +1} u_{\alpha- \beta+1}\Big(x_1, \dfrac{\overline{x}_1}{\varepsilon}, t \Big)
\end{equation}
and
\begin{equation}\label{Anz-S-al-be}
 \mathfrak{S}_\varepsilon(x,t) = s_0(x_1,t) +  \varepsilon^{\alpha  -1} s_{\alpha  -1}(x_1,t),
\end{equation}
where $(p_{0}, s_{0})$ is a solution to the problem
\begin{equation}\label{limit_prob-p0-s0-el}
 \left\{\begin{array}{l}
\partial_{x_1}\big(\lambda(s_0)\,  k_1(x_1) \, \partial_{x_1}p_0\big) =  0\quad \text{in} \ \ \mathcal{I}^T,
\\[3pt]
 \phi \, \partial_t s_0 = \partial_{x_1}\big(\Lambda(s_0) \, k_1 \, \partial_{x_1}s_0 + \lambda_w(s_0) \, k_1 \, \partial_{x_1}p_0\big) \quad \text{in} \ \ \mathcal{I}^T,
 \\[4pt]
 p_0(0,t) = q_0(t) \quad \text{and} \quad  p_0(\ell,t) = q_\ell(t), \quad t\in [0,T],
 \\[4pt]
 s_0(0,t)  = S^0(0,t) \quad \text{and} \quad  s_0(\ell,t) = S^0(\ell,t), \quad t\in [0,T],
 \\[3pt]
 s_0(x_1,0) = S^0(x_1,0), \quad x_1 \in [0, \ell],
 \end{array}\right.
\end{equation}
$(p_{\alpha  -1}, s_{\alpha  -1})$ is a solution to the problem
\begin{equation}\label{limit_prob-al-be}
 \left\{\begin{array}{l}
\partial_{x_1}\big(\lambda(s_0)\,  k_1(x_1) \, \partial_{x_1}p_{\alpha  -1} + \lambda'(s_0)\,  k_1(x_1) \, \partial_{x_1}p_0 \, s_{\alpha  -1} \big) =  \widehat{Q}(x_1,t)\quad \text{in} \ \ \mathcal{I}^T,
\\[3pt]
 \phi \, \partial_t s_{\alpha  -1} = \partial_{x_1}\big(\Lambda(s_0) \, k_1 \, \partial_{x_1}s_{\alpha  -1} +   a_1\, s_{\alpha  -1}  +  a_2 \, \partial_{x_1}p_{\alpha  -1}\big) -  b(s_0)\, \widehat{Q}\quad \text{in} \ \ \mathcal{I}^T,
 \\[4pt]
 p_{\alpha  -1}(0,t) = 0 \quad \text{and} \quad  p_{\alpha  -1}(\ell,t) = 0, \quad t\in [0,T],
 \\[4pt]
 s_{\alpha  -1}(0,t)  = 0 \quad \text{and} \quad  s_{\alpha  -1}(\ell,t) = 0, \quad t\in [0,T],
 \\[3pt]
 s_{\alpha  -1}(x_1,0) = 0, \quad x_1 \in [0, \ell],
 \end{array}\right.
\end{equation}
and $u_{\alpha- \beta+1}$ is a solution to problem \eqref{Neumann u2}.

As in \S~\ref{subsect-3-2}, we show that  problem \eqref{limit_prob-p0-s0-el}
has a unique solution from  the H\"older space  $\mathcal{C}^{2+\gamma, 1 +\gamma}([0,\ell]\times[0,T]).$
It should be noted that the compatibility condition of first order  for problem \eqref{limit_prob-p0-s0-el} coincides with \eqref{com-cond-1} since
$Q|_{t=0}=0.$

The coefficients $a_1$ and $a_2$ in the linear problem \eqref{limit_prob-al-be}  are determined by formulas \eqref{coeff-a1-a2} through the solution $(p_0, s_0)$  to  problem \eqref{limit_prob-p0-s0-el}. As in \S~\ref{subsect-3-3} (see Remark~\ref{remark-3-2}), we can show that
problem \eqref{limit_prob-al-be} has a unique classical solution.

Substituting  approximations \eqref{Anz-P-al-be} and \eqref{Anz-S-al-be} into problem $(\mathbb{P}_\varepsilon\mathbb{S}_\varepsilon\!),$  we find
\begin{gather}\label{eq-P-e-al-be}
\nabla \boldsymbol{\cdot} \big(\lambda(\mathfrak{S}_\varepsilon) \,  \mathbb{K}_\varepsilon(x)  \nabla \mathfrak{P}_\varepsilon \big) = \varepsilon^{2\alpha -2} \mathcal{F}^{(1)}_{1,\varepsilon} + \varepsilon^{\alpha - \beta +1} \mathcal{F}^{(1)}_{2,\varepsilon}
  \quad \text{in} \ \ \Omega_\varepsilon^T,
  \\ \label{eq-S-e-al-be}
 \phi(x_1) \, \partial_t  \mathfrak{S}_\varepsilon =  \nabla \boldsymbol{\cdot} \big(\Lambda(\mathfrak{S}_\varepsilon) \,  \mathbb{K}_\varepsilon(x) \nabla \mathfrak{S}_\varepsilon  -  b(\mathfrak{S}_\varepsilon) \, \overrightarrow{\mathfrak{V}}_\varepsilon \big) + \varepsilon^{2\alpha -2}\mathcal{F}^{(2)}_{1,\varepsilon}  + \varepsilon^{\alpha - \beta +1} \mathcal{F}^{(2)}_{2,\varepsilon} \quad \text{in} \ \ \Omega_\varepsilon^T,
\end{gather}
\begin{gather}
\label{Nbc-P-e-al-be}
\Big( \lambda(\mathfrak{S}_\varepsilon) \,  \mathbb{K}_\varepsilon(x)  \nabla \mathfrak{P}_\varepsilon\Big)  \boldsymbol{\cdot} {\boldsymbol{\nu}}_\varepsilon + \varepsilon^\alpha   Q_\varepsilon(x,t) = \varepsilon^{2\alpha -1} \, {\Phi}^{(1)}_\varepsilon \quad \text{on} \ \  \Gamma^T_\varepsilon,
\\ \label{Nbc-S-e-al-be}
\Big(\Lambda(\mathfrak{S}_\varepsilon) \,  \mathbb{K}_\varepsilon(x) \nabla \mathfrak{S}_\varepsilon  -  b(\mathfrak{S}_\varepsilon) \, \overrightarrow{\mathfrak{V}}_\varepsilon\Big) \boldsymbol{\cdot} {\boldsymbol{\nu}}_\varepsilon + \varepsilon^\alpha b(\mathfrak{S}_\varepsilon) \,
Q_\varepsilon= \varepsilon^{2\alpha -1} \, \Phi^{(2)}_\varepsilon \quad \text{on} \ \  \Gamma^T_\varepsilon,
\end{gather}
\begin{gather}
  \label{bc-P-e-al-be}
  \mathfrak{P}_\varepsilon|_{(x,t)\in \Upsilon^T_\varepsilon(0)} = q_0(t) \quad \text{and} \quad  \mathfrak{P}_\varepsilon|_{(x,t)\in \Upsilon^T_\varepsilon(\ell)} = q_\ell(t),
\\
\label{bc-S-e-al-be}
  \mathfrak{S}_\varepsilon|_{(x,t)\in \Upsilon^T_\varepsilon(0)} = S^0(0,t), \quad   \mathfrak{S}_\varepsilon|_{(x,t)\in \Upsilon^T_\varepsilon(\ell)} = S^0(\ell,t), \quad \mathfrak{S}_\varepsilon|_{t=0} = S^0(x_1,0).
  \end{gather}
Here $ \overrightarrow{\mathfrak{V}}_\varepsilon = -  \lambda(\mathfrak{S}_\varepsilon) \,  \mathbb{K}_\varepsilon(x)  \nabla \mathfrak{P}_\varepsilon$ and
\begin{gather}\label{est-F-al-be}
  \sup_{\Omega_\varepsilon^T}|\mathcal{F}^{(1)}_{i,\varepsilon}(x,t)| \le C_1, \quad
  \sup_{\Omega_\varepsilon^T}|\mathcal{F}^{(2)}_{i,\varepsilon}(x,t)| \le C_2, \quad i\in \{1, 2\},
\\
\label{est-Fhi-al-be}
  \sup_{\Gamma_\varepsilon^T}|\Phi^{(1)}_\varepsilon(x,t)| \le C_3, \quad
  \sup_{\Gamma_\varepsilon^T}|\Phi^{(2)}_\varepsilon(x,t)| \le C_4.
\end{gather}

\begin{theorem}[the case $\alpha > \beta -1,$ $\alpha > 1$ and $\beta \neq  0$]\label{Th_5-1}
Assume that, in addition to the main assumptions made in Section~\ref{Sec:Statement},
condition \eqref{com-cond-1} is  satisfied.
Then,  there are positive constants $\tilde{C}_0,$ $\tilde{C}_1,$ $\tilde{C}_2$ and $\varepsilon_0$ such that for all $\varepsilon\in (0, \varepsilon_0)$
the  differences $U_\varepsilon := \mathfrak{P}_\varepsilon - P_\varepsilon$ and $W_\varepsilon := \mathfrak{S}_\varepsilon - {S}_\varepsilon$
 satisfy the estimates
   \begin{equation}\label{app-estimate-U-e-al-be}
 \tfrac{1}{\sqrt{\upharpoonleft  \Omega_\varepsilon \! \upharpoonright_3}}\,  \max_{t\in \times [0,T]}
 \sqrt{\int_{\Omega_\varepsilon} |\partial_{x_1} U_\varepsilon(\cdot,t)|^2\, dx + \varepsilon^\beta \, \int_{\Omega_\varepsilon}
|\nabla_{\overline{x}_1} U_\varepsilon(\cdot,t)|^2\, dx} \le \tilde{C}_0 \, \varepsilon^{\aleph_{\alpha,\beta}},
 \end{equation}
 \begin{equation}\label{app-estimate-W-ep-al-be}
\tfrac{1}{\sqrt{\upharpoonleft  \Omega_\varepsilon \! \upharpoonright_3}}\,  \max_{t\in \times [0,T]}
{\| W_\varepsilon(\cdot,t) \|}_{L^2(\Omega_\varepsilon)} \le \tilde{C}_1 \,  \varepsilon^{\aleph_{\alpha,\beta}},
\end{equation}
and
\begin{equation}\label{app-estimateW-ep2-al-be}
\tfrac{1}{\sqrt{\upharpoonleft  \Omega_\varepsilon \! \upharpoonright_3}}\,
\sqrt{ \int_{0}^{T} \int_{\Omega_\varepsilon} |\partial_{x_1} W_\varepsilon|^2 \, dx dt
+ \varepsilon^\beta \int_{0}^{T} \int_{\Omega_\varepsilon} |\nabla_{\overline{x}_1} W_\varepsilon|^2 \, dx dt} \le
 \tilde{C}_2\, \varepsilon^{\aleph_{\alpha,\beta}},
\end{equation}
where $\aleph_{\alpha,\beta} = \min\{ \alpha-1 , \, \frac{\alpha-\beta +1}{2}\}.$
 \end{theorem}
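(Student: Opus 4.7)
The overall strategy will parallel the proof of Theorem~\ref{Th_1+}, but will have to accommodate two new features: an extra corrector of order $\varepsilon^{\alpha-1}$ in the ansatzes \eqref{Anz-P-al-be}--\eqref{Anz-S-al-be}, and non-trivial residuals of order $\varepsilon^{2\alpha-1}$ in the Neumann boundary conditions \eqref{Nbc-P-e-al-be}--\eqref{Nbc-S-e-al-be}. I would start by writing down the equations for the differences $U_\varepsilon := \mathfrak{P}_\varepsilon - P_\varepsilon$ and $W_\varepsilon := \mathfrak{S}_\varepsilon - S_\varepsilon$ exactly as in \eqref{eq-U-e}--\eqref{Nbc-W-e} via the mean value theorem applied to $\lambda, \Lambda, \lambda_w, b$ between $S_\varepsilon$ and $\mathfrak{S}_\varepsilon$, but now carrying the volume residuals $\varepsilon^{2\alpha-2}\mathcal{F}^{(j)}_{1,\varepsilon} + \varepsilon^{\alpha-\beta+1}\mathcal{F}^{(j)}_{2,\varepsilon}$ on the right-hand sides and the additional surface terms $\varepsilon^{2\alpha-1}\Phi^{(j)}_\varepsilon$ on $\Gamma^T_\varepsilon$. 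The boundary and initial traces of $U_\varepsilon, W_\varepsilon$ on $\Upsilon^T_\varepsilon(0), \Upsilon^T_\varepsilon(\ell)$ and $\{t=0\}$ vanish exactly as in \eqref{bc-U-e}--\eqref{bc-W-e}, since the approximations match the data there.

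Next I would perform the elliptic energy estimate: multiply the equation for $U_\varepsilon$ by $U_\varepsilon$, integrate over $\Omega_\varepsilon$, and use the block-diagonal structure of $\mathbb{K}_\varepsilon$ together with \eqref{cercitive}, \eqref{total mobillity} to produce the coercive left side $c(\int |\partial_{x_1}U_\varepsilon|^2\,dx + \varepsilon^\beta\int|\nabla_{\overline{x}_1}U_\varepsilon|^2\,dx)$, as done around \eqref{U-ine1-be}--\eqref{Uin2-be}. The novelty is the surface term $\varepsilon^{2\alpha-1}\int_{\Gamma_\varepsilon}\Phi^{(1)}_\varepsilon U_\varepsilon\,d\sigma$, which I would control by the trace inequality \eqref{ineq1} combined with the uniform bound \eqref{est-Fhi-al-be}:
\begin{equation*}
\Bigl|\varepsilon^{2\alpha-1}\int_{\Gamma_\varepsilon}\Phi^{(1)}_\varepsilon U_\varepsilon\,d\sigma\Bigr| \le C\varepsilon^{2\alpha-1}\Bigl(\int_{\Gamma_\varepsilon}U_\varepsilon^2\,d\sigma\Bigr)^{1/2}|\Gamma_\varepsilon|^{1/2},
\end{equation*}
so that after applying \eqref{ineq1} together with a Poincar\'e inequality in the $x_1$-direction (permissible thanks to \eqref{bc-U-e}) this contribution collapses to something of order $\varepsilon^{2\alpha-1}\sqrt{\int|\partial_{x_1}U_\varepsilon|^2\,dx}$, absorbable by the coercive left-hand side with residual of order $\varepsilon^{4\alpha-2}$. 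The cross term $\int W_\varepsilon\lambda'(\Theta^{(1)}_\varepsilon)\mathbb{K}_\varepsilon\nabla\mathfrak{P}_\varepsilon\cdot\nabla U_\varepsilon\,dx$ is split exactly as in Theorem~\ref{Th_1+}, with the extra $\varepsilon^{\alpha-1}p_{\alpha-1}$ and $\varepsilon^{\alpha-\beta+1}u_{\alpha-\beta+1}$ contributions bookkept but not dominant. The outcome is an analogue of \eqref{U-in3-be}:
\begin{equation*}
\int_{\Omega_\varepsilon}|\partial_{x_1}U_\varepsilon|^2\,dx + \varepsilon^\beta\int_{\Omega_\varepsilon}|\nabla_{\overline{x}_1}U_\varepsilon|^2\,dx \le C\Bigl(\varepsilon^{4\alpha-2}+\varepsilon^{2(\alpha-\beta+1)}+\int_{\Omega_\varepsilon}W_\varepsilon^2\,dx\Bigr),
\end{equation*}
with the explicit residual $\varepsilon^{2\aleph_{\alpha,\beta}}$ emerging from $\min\{2\alpha-2,\alpha-\beta+1\}$ once the two scales are compared.

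For the parabolic part I would multiply the $W_\varepsilon$ equation by $W_\varepsilon$, integrate over $\Omega_\varepsilon^\tau$, and use \eqref{regular} together with \eqref{cercitive} to get the coercive left side of \eqref{ine-W-e-be+}. The cross terms $I_1(\varepsilon), I_2(\varepsilon)$ defined in \eqref{I-1}--\eqref{I-2} are treated verbatim as in Theorem~\ref{Th_1+}, now feeding in the elliptic bound above for $\nabla U_\varepsilon$; the new surface contribution $\varepsilon^{2\alpha-1}\int_{\Gamma^\tau_\varepsilon}\Phi^{(2)}_\varepsilon W_\varepsilon\,d\sigma_x d\tau$ is handled by the same trace-plus-Poincar\'e route as above, trading for $\delta\int|\partial_{x_1}W_\varepsilon|^2 + C_\delta\varepsilon^{4\alpha-2}$; and the volume residual $(\varepsilon^{2\alpha-2}\mathcal{F}^{(2)}_{1,\varepsilon}+\varepsilon^{\alpha-\beta+1}\mathcal{F}^{(2)}_{2,\varepsilon})\cdot W_\varepsilon$ is bounded by $C(\varepsilon^{4\alpha-4}+\varepsilon^{2(\alpha-\beta+1)} + \int W_\varepsilon^2)$ thanks to \eqref{est-F-al-be}. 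Choosing the various Cauchy parameters $\gamma_j$ small enough to absorb gradient terms on the left and $\varepsilon_0$ small, I would arrive at the analogue of \eqref{est-W-e-be}:
\begin{equation*}
\int_{\Omega_\varepsilon}W_\varepsilon^2(x,\tau)\,dx + \int_{\Omega_\varepsilon^\tau}\bigl(|\partial_{x_1}W_\varepsilon|^2 + \varepsilon^\beta|\nabla_{\overline{x}_1}W_\varepsilon|^2\bigr)\,dx\,dt \le C\Bigl(\varepsilon^{2\aleph_{\alpha,\beta}\cdot 2}+\int_0^\tau\!\!\int_{\Omega_\varepsilon}W_\varepsilon^2\,dx\,dt\Bigr).
\end{equation*}
Applying Gronwall's lemma to $\int W_\varepsilon^2(x,\tau)\,dx$ then yields the pointwise-in-$t$ $L^2$ bound \eqref{app-estimate-W-ep-al-be}, substituting back delivers \eqref{app-estimateW-ep2-al-be}, and inserting into the elliptic estimate gives \eqref{app-estimate-U-e-al-be} after rescaling by $\sqrt{|\Omega_\varepsilon|_3}$.

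The principal obstacle I anticipate is the careful bookkeeping of the two competing small scales $\varepsilon^{2\alpha-2}$ and $\varepsilon^{\alpha-\beta+1}$ against the boundary residual $\varepsilon^{2\alpha-1}$, and ensuring that the trace inequality \eqref{ineq1} is applied in a way compatible with the anisotropic coercive norm $\int|\partial_{x_1}\cdot|^2 + \varepsilon^\beta\int|\nabla_{\overline{x}_1}\cdot|^2$ without spoiling the final exponent $\aleph_{\alpha,\beta}=\min\{\alpha-1,(\alpha-\beta+1)/2\}$. For $\beta$ close to (but above) $0$ or for $\alpha$ close to $1$ some terms become close to critical and the constants in Cauchy's inequality must be chosen sharp enough that the gradient terms produced by the surface and cross integrals can still be absorbed into the coercive quadratic form. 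The subcase $\beta = 0$ is excluded here for the same reason as in Remark~\ref{remark-4-1}: adapting the argument there requires dropping the $\varepsilon^\beta$ factor, which does not affect the structure of the estimates but slightly simplifies the boundary handling; it will be treated separately.
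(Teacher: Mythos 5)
Your overall strategy is indeed the paper's: write the equations for $U_\varepsilon$ and $W_\varepsilon$ via the mean value theorem, do the anisotropic energy estimate for $U_\varepsilon$, control the new boundary residuals $\varepsilon^{2\alpha-1}\Phi^{(j)}_\varepsilon$ with the trace inequality \eqref{ineq1}, feed the elliptic bound into the parabolic estimate through $I_1,I_2$, and close with Gronwall. Two things, however, do not survive scrutiny. First, a preliminary step is missing: before any mean-value argument you must check that $\mathfrak{S}_\varepsilon=s_0+\varepsilon^{\alpha-1}s_{\alpha-1}$ stays in a compact subinterval of $(0,1)$ for small $\varepsilon$ (the paper's inequality \eqref{saturation2}, which uses $\alpha>1$ and the boundedness of $s_{\alpha-1}$); otherwise the intermediate points $\Theta^{(j)}_\varepsilon$ may leave $[0,1]$, where $\lambda,\Lambda,\lambda_w,b,p_c$ and their derivatives are not defined or not controlled, so none of your subsequent coefficient bounds are justified.

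Second, and more seriously, your exponent bookkeeping is inconsistent with the normalization $1/\sqrt{\upharpoonleft\Omega_\varepsilon\upharpoonright_3}$ in the statement, and the final pre-Gronwall inequality you write does not prove the theorem. The measures enter as $\upharpoonleft\Omega_\varepsilon\upharpoonright_3\sim\varepsilon^2$ and $\upharpoonleft\Gamma_\varepsilon\upharpoonright_2\sim\varepsilon$; carrying them consistently (as the paper does in \eqref{F-1-al-be}--\eqref{Fi-1-al-be} and \eqref{U-ine2-al-be}) the residual after absorption is $\varepsilon^{2\alpha}+\varepsilon^{\alpha-\beta+3}=\varepsilon^{2}\,\varepsilon^{2\aleph_{\alpha,\beta}}$, where the extra $\varepsilon^{2}$ is exactly cancelled by dividing by $\upharpoonleft\Omega_\varepsilon\upharpoonright_3$, which is what yields the rate $\varepsilon^{\aleph_{\alpha,\beta}}$. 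In your version the volume factor is dropped in some terms ($\varepsilon^{4\alpha-4}$, $\varepsilon^{2(\alpha-\beta+1)}$ instead of $\varepsilon^{4\alpha-2}$, $\varepsilon^{2(\alpha-\beta+2)}$) and kept in others, and you end with the residual $\varepsilon^{4\aleph_{\alpha,\beta}}$. Taken literally, Gronwall then gives $\max_t\int_{\Omega_\varepsilon}W_\varepsilon^2\,dx\le C\varepsilon^{4\aleph_{\alpha,\beta}}$, hence after normalization only $\varepsilon^{2\aleph_{\alpha,\beta}-1}$; this is weaker than the claimed $\varepsilon^{\aleph_{\alpha,\beta}}$ whenever $\aleph_{\alpha,\beta}<1$ and gives no decay at all when $\aleph_{\alpha,\beta}\le\tfrac12$ (e.g.\ $\alpha=1.4$, $\beta=1$), so the argument as written fails on part of the admissible parameter range; for $\aleph_{\alpha,\beta}>1$ it instead overclaims a residual smaller than your own crude Cauchy bounds deliver (the boundary term alone contributes $\varepsilon^{4\alpha-2}$, which can exceed $\varepsilon^{4\aleph_{\alpha,\beta}}$). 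The fix is simply to redo the estimates keeping $\upharpoonleft\Omega_\varepsilon\upharpoonright_3$ and $\upharpoonleft\Gamma_\varepsilon\upharpoonright_2$ explicit, arriving at the paper's inequalities \eqref{U-ine2-al-be} and \eqref{ine-W-e-al-be+1} with residual $\varepsilon^{2\alpha}+\varepsilon^{\alpha-\beta+3}$; also note that your reduction of the surface term to "something of order $\varepsilon^{2\alpha-1}\|\partial_{x_1}U_\varepsilon\|_{L^2}$" silently discards the transversal piece $\varepsilon^{2\alpha}\|\nabla_{\overline{x}_1}U_\varepsilon\|_{L^2}$, which must be absorbed into $\varepsilon^{\beta}\int|\nabla_{\overline{x}_1}U_\varepsilon|^2\,dx$ using $2\alpha>\beta$ (this is exactly where the hypotheses $\alpha>1$, $\alpha>\beta-1$ are used), as in \eqref{Fi-1-al-be}.
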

\begin{proof} Compared to the previous theorems, the proof of this theorem requires some changes. First, we note that since $s_0$ satisfies inequalities \eqref{limit-saturation} and $s_{\alpha-1}$ is bounded and $\alpha >1,$ there exists an $\varepsilon_1$ such that for all $\varepsilon \in (0, \varepsilon_1)$
\begin{equation}\label{saturation2}
     0< \frac{\delta_0}{2}  \le \mathfrak{S}_\varepsilon(x,t) \le \frac{1+ \delta_1}{2}  < 1, \quad \text{for } \ (x,t) \in \Omega_\varepsilon^T.
\end{equation}
where $\delta_0$ and $\delta_1$ are defined in \eqref{saturation1}.

 The  differences
$U_\varepsilon = \mathfrak{P}_\varepsilon - P_\varepsilon$ and $W_\varepsilon = \mathfrak{S}_\varepsilon - {S}_\varepsilon$
 satisfy relations
 \begin{equation}\label{eq-U-e-al-be}
\nabla \boldsymbol{\cdot} \Big(\lambda({S}_\varepsilon(x,t)) \,  \mathbb{K}_\varepsilon\nabla U_\varepsilon  + W_\varepsilon \, \lambda'(\Theta^{(1)}_\varepsilon)\, \mathbb{K}_\varepsilon\nabla\mathfrak{P}_\varepsilon \Big) =
 \varepsilon^{2\alpha -2} \mathcal{F}^{(1)}_{1,\varepsilon} + \varepsilon^{\alpha - \beta +1} \mathcal{F}^{(1)}_{2,\varepsilon} \quad \text{in} \ \ \Omega_\varepsilon^T,
  \end{equation}
where $\Theta^{(1)}_\varepsilon(x,t) = \theta_1 \, \mathfrak{S}_\varepsilon(x,t) + (1-\theta_1)\, {S}_\varepsilon, \  \theta_1 \in (0,1),$
\begin{multline}\label{eq-W-e-al-be}
 \phi(x_1) \, \partial_t  W_\varepsilon
  -  \nabla \boldsymbol{\cdot} \Big(\Lambda({S}_\varepsilon) \,  \mathbb{K}_\varepsilon(x) \nabla W_\varepsilon + W_\varepsilon \, \Lambda'(\Theta^{(2)}_\varepsilon)\, \mathbb{K}_\varepsilon(x)  \nabla\mathfrak{S}_\varepsilon
 \\
 + \lambda_w({S}_\varepsilon) \,  \mathbb{K}_\varepsilon(x) \nabla U_\varepsilon
 + W_\varepsilon \, \lambda'_w(\Theta^{(3)}_\varepsilon)\, \mathbb{K}_\varepsilon(x)  \nabla\mathfrak{P}_\varepsilon
 \Big)
=  \varepsilon^{2\alpha -2} \mathcal{F}^{(2)}_{1,\varepsilon} + \varepsilon^{\alpha - \beta +1} \mathcal{F}^{(2)}_{2,\varepsilon}  \quad \text{in} \ \ \Omega_\varepsilon^T,
\end{multline}
\begin{equation}\label{Nbc-U-e-al-be}
\Big(\lambda({S}_\varepsilon) \,  \mathbb{K}_\varepsilon(x)  \nabla U_\varepsilon +
 W_\varepsilon \, \lambda'(\Theta^{(1)}_\varepsilon)\, \mathbb{K}_\varepsilon(x)  \nabla\mathfrak{P}_\varepsilon
\Big) \boldsymbol{\cdot} {\boldsymbol{\nu}}_\varepsilon
  =  \varepsilon^{2\alpha -1} \, {\Phi}^{(1)}_\varepsilon \quad \text{on} \ \  \Gamma^T_\varepsilon,
\end{equation}
\begin{multline} \label{Nbc-W-e-al-be}
\Big(\Lambda({S}_\varepsilon) \,  \mathbb{K}_\varepsilon(x) \nabla W_\varepsilon
+ W_\varepsilon \, \Lambda'(\Theta^{(2)}_\varepsilon)\, \mathbb{K}_\varepsilon(x)  \nabla\mathfrak{S}_\varepsilon
 + \lambda_w({S}_\varepsilon) \,  \mathbb{K}_\varepsilon(x) \nabla U_\varepsilon
\\
 + W_\varepsilon \, \lambda'_w(\Theta^{(3)}_\varepsilon)\, \mathbb{K}_\varepsilon(x)  \nabla\mathfrak{P}_\varepsilon \Big) \boldsymbol{\cdot} {\boldsymbol{\nu}}_\varepsilon
=  \varepsilon^{2\alpha -1} \, \Phi^{(2)}_\varepsilon \quad \text{on} \ \  \Gamma^T_\varepsilon,
\end{multline}
and \eqref{bc-U-e} and \eqref{bc-W-e}.

The main difference with the previous theorems is the presence of discrepancies in  boundary conditions
\eqref{Nbc-U-e-al-be} and \eqref{Nbc-W-e-al-be}.
Multiplying the corresponding equation \eqref{eq-U-e-al-be} by $U_\varepsilon$ and integrating it over $\Omega_\varepsilon,$ we get
\begin{multline}\label{U-ine1-al-be}
\int_{\Omega_\varepsilon} \lambda({S}_\varepsilon) \,  \mathbb{K}_\varepsilon  \nabla U_\varepsilon \boldsymbol{\cdot} \nabla U_\varepsilon \, dx = - \int_{\Omega_\varepsilon} W_\varepsilon \, \lambda'(\Theta^{(1)}_\varepsilon)\, \mathbb{K}_\varepsilon  \nabla\mathfrak{P}_\varepsilon \boldsymbol{\cdot} \nabla U_\varepsilon \, dx
   \\
   + \varepsilon^{2\alpha -1} \int_{\Gamma_\varepsilon} {\Phi}^{(1)}_\varepsilon\, U_\varepsilon\, d\sigma_x  -  \varepsilon^{2\alpha -2} \int_{\Omega_\varepsilon} \mathcal{F}^{(1)}_{1,\varepsilon}
\, U_\varepsilon\, dx -  \varepsilon^{\alpha - \beta +1} \int_{\Omega_\varepsilon} \mathcal{F}^{(1)}_{2,\varepsilon}
\, U_\varepsilon\, dx.
\end{multline}
Taking into account the form of the  absolute permeability tensor \eqref{permeability} and using Cauchy's inequality with any $\gamma >0,$ the first integral on the right-hand side of \eqref{U-ine1-al-be} is bounded from above by the sum
$$
C_1 \bigg(\gamma \int_{\Omega_\varepsilon} |\partial_{x_1} U_\varepsilon|^2\, dx + \frac{1}{\gamma} \int_{\Omega_\varepsilon}W_\varepsilon^2\, dx + \varepsilon^{\alpha -1} \int_{\Omega_\varepsilon}W_\varepsilon^2\, dx + \varepsilon^{\alpha +1} \int_{\Omega_\varepsilon}|\nabla_{\overline{x}_1} U_\varepsilon|^2\, dx\bigg).
$$
Considering \eqref{est-F-al-be}, \eqref{est-Fhi-al-be} and \eqref{bc-U-e}, the remaining integrals on the right-hand side of \eqref{U-ine1-al-be} are estimated as follows
\begin{gather}\label{F-1-al-be}
  \varepsilon^{2\alpha -2} \bigg|\int_{\Omega_\varepsilon} \mathcal{F}^{(1)}_{1,\varepsilon}
\, U_\varepsilon\, dx \bigg| \le C_2 \bigg(\varepsilon^{2\alpha} + \varepsilon^{2\alpha -2} \int_{\Omega_\varepsilon} |\partial_{x_1} U_\varepsilon|^2\, dx\bigg),
   \\ \label{F-2-al-be}
  \varepsilon^{\alpha - \beta +1} \bigg|\int_{\Omega_\varepsilon} \mathcal{F}^{(1)}_{2,\varepsilon}
\, U_\varepsilon\, dx\bigg| \le C_3 \bigg(\varepsilon^{\alpha - \beta +3} + \varepsilon^{\alpha - \beta +1} \int_{\Omega_\varepsilon} |\partial_{x_1} U_\varepsilon|^2\, dx\bigg),
\end{gather}
\begin{equation}\label{Fi-1-al-be}
  \varepsilon^{2\alpha -1} \bigg|\int_{\Gamma_\varepsilon} {\Phi}^{(1)}_\varepsilon\, U_\varepsilon\, d\sigma_x\bigg| \stackrel{\eqref{ineq1}}{\le} C_4\bigg(\varepsilon^{2\alpha} + \varepsilon^{2\alpha}\int_{\Omega_\varepsilon}|\nabla_{\overline{x}_1} U_\varepsilon|^2\, dx +      \varepsilon^{2\alpha-2} \int_{\Omega_\varepsilon} |\partial_{x_1} U_\varepsilon|^2\, dx\bigg).
\end{equation}
As a result, based on these estimates and taking suitable values for $\gamma$ and $\varepsilon_0,$ we deduce from \eqref{U-ine1-al-be} the inequality
\begin{equation}\label{U-ine2-al-be}
\int_{\Omega_\varepsilon} |\partial_{x_1} U_\varepsilon|^2\, dx + \varepsilon^\beta \, \int_{\Omega_\varepsilon}
|\nabla_{\overline{x}_1} U_\varepsilon|^2\, dx \le C_5 \Big(\int_{\Omega_\varepsilon}W_\varepsilon^2\, dx + \varepsilon^{2\alpha } +\varepsilon^{\alpha - \beta +3}\Big)
\end{equation}
for any $\varepsilon \in (0, \varepsilon_0).$

Taking into account \eqref{permeability}, \eqref{cercitive} and \eqref{total mobillity},   inequality \eqref{ine-W-e-be+} now looks like
\begin{multline}\label{ine-W-e-al-be}
  \int_{\Omega_\varepsilon} W^2_\varepsilon(x,\tau)\, dx + \int_{0}^{\tau} \int_{\Omega_\varepsilon} |\partial_{x_1} W_\varepsilon|^2 \, dx\, dt
+ \varepsilon^\beta \int_{0}^{\tau} \int_{\Omega_\varepsilon} |\nabla_{\overline{x}_1} W_\varepsilon|^2 \, dx\, dt
\\
  \le C_0 \bigg(|I_1(\varepsilon)| +| I_2(\varepsilon)|
+ \varepsilon^{2\alpha -1} \int_{0}^{\tau}\int_{\Gamma_\varepsilon} |{\Phi}^{(2)}_\varepsilon|\, |W_\varepsilon|\, d\sigma_x dt
\\
 +  \varepsilon^{2\alpha -2} \int_{0}^{\tau}\int_{\Omega_\varepsilon} |\mathcal{F}^{(2)}_{1,\varepsilon}|
\, |W_\varepsilon|\, dx dt +  \varepsilon^{\alpha-\beta+1} \int_{0}^{\tau}\int_{\Omega_\varepsilon} |\mathcal{F}^{(2)}_{2,\varepsilon}|
\, |W_\varepsilon|\, dx dt
\bigg),
\end{multline}
where $I_1(\varepsilon)$ and $I_2(\varepsilon)$ are determined by formulas \eqref{I-1} and \eqref{I-2}, respectively.
The integrals in the right-hand side  of \eqref{ine-W-e-al-be} are estimated in the same way as integrals \eqref{F-1-al-be} -- \eqref{Fi-1-al-be}.
Using  Cauchy's inequality with any $\gamma >0,$ we bound $I_1(\varepsilon)$ and $I_2(\varepsilon):$
\begin{align*}
 |I_1(\varepsilon)|& \le  \gamma_1 \int_{0}^{\tau}\int_{\Omega_\varepsilon} |\partial_{x_1} W_\varepsilon|^2\, dx dt + \frac{C_1}{\gamma_1} \int_{0}^{\tau}\int_{\Omega_\varepsilon} |W_\varepsilon|^2\, dx dt
 \\
 & \  \ \ +
 \varepsilon^{\alpha-1} C_2 \int_{0}^{\tau}\int_{\Omega_\varepsilon} | W_\varepsilon|^2\, dx dt + \varepsilon^{\alpha+1} C_3 \, \int_{0}^{\tau}\int_{\Omega_\varepsilon} |\nabla_{\overline{x}_1} W_\varepsilon|^2\, dx dt,
\end{align*}
\begin{align*}
 |I_2(\varepsilon)|& = \bigg|\int_{0}^{\tau} \int_{\Omega_\varepsilon} \lambda_w(S_\varepsilon)\, k_1(x_1)\, \partial_{x_1}U_\varepsilon\, \partial_{x_1}W_\varepsilon\,dx dt
 \\ &
\ \ \ \  +
 \int_{0}^{\tau} \int_{\Omega_\varepsilon}\lambda_w(S_\varepsilon)\,  \varepsilon^\beta \, (K\nabla_{\overline{x}_1} U_\varepsilon)\cdot \nabla_{\overline{x}_1} W_\varepsilon dx dt\bigg|
   \\
   &
  \le \gamma_2 \int_{0}^{\tau}\int_{\Omega_\varepsilon} |\partial_{x_1} W_\varepsilon|^2\, dx dt + \frac{C_4}{\gamma_2} \int_{0}^{\tau}\int_{\Omega_\varepsilon} |\partial_{x_1}U_\varepsilon|^2\, dx dt
  \\
 & \ \ \ +
 \varepsilon^\beta \, \gamma_3 \int_{0}^{\tau}\int_{\Omega_\varepsilon} |\nabla_{\overline{x}_1} W_\varepsilon|^2\, dx dt + \varepsilon^\beta \, \frac{C_5}{\gamma_3} \int_{0}^{\tau}\int_{\Omega_\varepsilon} |\nabla_{\overline{x}_1} U_\varepsilon|^2\, dx dt.
\end{align*}

By choosing $\gamma_1,$ $\gamma_2$, $\gamma_3$ and $\varepsilon_1$ to be suitably small, we derive from \eqref{ine-W-e-al-be} the inequality
\begin{multline}\label{ine-W-e-al-be+1}
  \int_{\Omega_\varepsilon} W^2_\varepsilon(x,\tau)\, dx + \int_{0}^{\tau} \int_{\Omega_\varepsilon} |\partial_{x_1} W_\varepsilon|^2 \, dx\, dt
+ \varepsilon^\beta \int_{0}^{\tau} \int_{\Omega_\varepsilon} |\nabla_{\overline{x}_1} W_\varepsilon|^2 \, dx\, dt
\\
\le C_3 \bigg(\varepsilon^{2\alpha } +\varepsilon^{\alpha - \beta +3}  + \int_{0}^{\tau}\int_{\Omega_\varepsilon} | W_\varepsilon|^2\, dx dt +
\int_{0}^{\tau}\int_{\Omega_\varepsilon} |\partial_{x_1} U_\varepsilon|^2\, dx dt
 + \varepsilon^\beta \, \int_{0}^{\tau}\int_{\Omega_\varepsilon}
|\nabla_{\overline{x}_1} U_\varepsilon|^2\, dx dt\bigg)
\\
\stackrel{\eqref{U-ine2-al-be}}{\le} C_4\bigg(\varepsilon^{2\alpha } +\varepsilon^{\alpha - \beta +3}  + \int_{0}^{\tau}\int_{\Omega_\varepsilon} | W_\varepsilon|^2\, dx dt\bigg)
\end{multline}
for any $\varepsilon \in (0, \varepsilon_1).$

Now applying Gronwall's lemma's to the inequality
$$
\int_{\Omega_\varepsilon} W^2_\varepsilon(x,\tau)\, dx  \le C_4 \Big(\varepsilon^{2\alpha } +\varepsilon^{\alpha - \beta +3}  + \int_{0}^{\tau}\int_{\Omega_\varepsilon} | W_\varepsilon|^2\, dx dt\Big),
$$
we get
\begin{equation}\label{uniform-est-W-e-al-be}
  \max_{t\in [0,T]} \int_{\Omega_\varepsilon} W^2_\varepsilon(x,t)\, dx \le C_5 \, \big(\varepsilon^{2\alpha } +\varepsilon^{\alpha - \beta +3}\big).
\end{equation}

It is easy to see that estimates \eqref{app-estimate-U-e-al-be} -- \eqref{app-estimateW-ep2-al-be} follow directly from  \eqref{uniform-est-W-e-al-be}, \eqref{ine-W-e-al-be+1} and \eqref{U-ine2-al-be}.
\end{proof}

The next corollary is a consequence  of estimates \eqref{app-estimate-U-e-al-be} -- \eqref{app-estimateW-ep2-al-be}.

\begin{corollary}[the case $\alpha > \beta -1,$ $\alpha > 1$ and $\beta \neq  0$]\label{Corollary5-1}
 Let $(P_\varepsilon, S_\varepsilon)$ be a weak solution to problem $(\mathbb{P}_\varepsilon\mathbb{S}_\varepsilon\!),$ and
 $(p_0, s_0)$ be a solution to  problem \eqref{limit_prob-p0-s0-el}.  Then the following asymptotic estimates hold:
 \begin{equation}\label{est-P-p-al-be}
   \max_{t\in[0,T]}{\| \langle {P}_\varepsilon \rangle_{\varepsilon\varpi} - p_0 - \varepsilon^{\alpha -1} p_{\alpha -1}\|}_{H^1(0, \ell)}
        + {\| \langle {P}_\varepsilon \rangle_{\varepsilon\varpi} - p_0 - \varepsilon^{\alpha -1} p_{\alpha -1}\|}_{C\left([0, \ell]\times[0,T]\right)} \le C_1 \, \varepsilon^{\aleph_{\alpha,\beta}},
  \end{equation}
 \begin{equation}\label{est-S-s-al-be}
      {\| \langle {S}_\varepsilon \rangle_{\varepsilon\varpi} - s_0 - \varepsilon^{\alpha -1} s_{\alpha -1} \|}_{L^2(0,T; H^1(0, \ell))}
            + {\| \langle {S}_\varepsilon \rangle_{\varepsilon\varpi} - s_0 - \varepsilon^{\alpha -1} s_{\alpha -1}\|}_{L^2(0,T; C([0, \ell]))} \le C_2 \, \varepsilon^{\aleph_{\alpha,\beta}},
 \end{equation}
 where $\aleph_{\alpha,\beta} = \min\{ \alpha-1 , \, \frac{\alpha-\beta +1}{2}\}.$
 \end{corollary}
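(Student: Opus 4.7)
The plan is to adapt verbatim the argument from the proof of Corollary~\ref{Corollary3-1}, substituting the energy estimates from Theorem~\ref{Th_5-1} for those from Theorem~\ref{Th_1} and accounting for the new ansatz. The key structural fact I will exploit is that, thanks to the normalisation condition \eqref{uniq_1} applied to $u_{\alpha-\beta+1}$, the cross-sectional means of the ansatz functions in \eqref{Anz-P-al-be}--\eqref{Anz-S-al-be} satisfy
\begin{equation*}
\langle \mathfrak{P}_\varepsilon \rangle_{\varepsilon\varpi}(x_1,t) = p_0(x_1,t) + \varepsilon^{\alpha-1} p_{\alpha-1}(x_1,t), \qquad \langle \mathfrak{S}_\varepsilon \rangle_{\varepsilon\varpi}(x_1,t) = s_0(x_1,t) + \varepsilon^{\alpha-1} s_{\alpha-1}(x_1,t),
\end{equation*}
together with the analogous identities for $\partial_{x_1}$, since the cross-section $\varepsilon\varpi$ is independent of $x_1$ and hence $\partial_{x_1}$ commutes with the averaging operator.

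First I would apply the Cauchy--Bunyakovsky--Schwarz inequality to the cross-sectional average just as in the proof of Corollary~\ref{Corollary3-1}, obtaining
\begin{equation*}
\|\langle P_\varepsilon - \mathfrak{P}_\varepsilon\rangle_{\varepsilon\varpi}(\cdot,t)\|_{L^2(0,\ell)}^2 \le \frac{1}{\varepsilon^2 \upharpoonleft\!\varpi\!\upharpoonright_2}\, \|P_\varepsilon - \mathfrak{P}_\varepsilon\|_{L^2(\Omega_\varepsilon)}^2 = \frac{\ell}{\upharpoonleft\!\Omega_\varepsilon\!\upharpoonright_3}\,\|P_\varepsilon - \mathfrak{P}_\varepsilon\|_{L^2(\Omega_\varepsilon)}^2,
\end{equation*}
and the same bound for $\partial_{x_1}$ after commuting derivative and mean. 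The same manipulation applies with $S_\varepsilon-\mathfrak{S}_\varepsilon$ in place of $P_\varepsilon-\mathfrak{P}_\varepsilon$.

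Next I would invoke Theorem~\ref{Th_5-1}. Estimate \eqref{app-estimate-U-e-al-be} controls $\|\partial_{x_1}U_\varepsilon(\cdot,t)\|_{L^2(\Omega_\varepsilon)}$ uniformly in $t$ at order $\sqrt{\upharpoonleft\Omega_\varepsilon\upharpoonright_3}\,\varepsilon^{\aleph_{\alpha,\beta}}$, and since $U_\varepsilon$ vanishes on both end cross-sections by \eqref{bc-U-e}, Poincar\'e's inequality in the $x_1$-direction upgrades this to the corresponding bound for $\|U_\varepsilon(\cdot,t)\|_{L^2(\Omega_\varepsilon)}$ with no loss. Combining the two yields the $H^1(0,\ell)$ estimate \eqref{est-P-p-al-be} for the $L^2$ part, and the continuous embedding $H^1(0,\ell) \hookrightarrow C([0,\ell])$ delivers the uniform pointwise bound. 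For the saturation, estimates \eqref{app-estimate-W-ep-al-be} and \eqref{app-estimateW-ep2-al-be} furnish the $L^2$ and $\partial_{x_1}$-$L^2$ controls respectively (after integrating the $\max_t$-bound over $(0,T)$ for the former), giving \eqref{est-S-s-al-be} in the $L^2(0,T;H^1(0,\ell))$ norm; again $H^1(0,\ell)\hookrightarrow C([0,\ell])$ supplies the $L^2(0,T;C([0,\ell]))$ part.

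The only mildly delicate point is that Theorem~\ref{Th_5-1} provides the transversal-gradient bound only with the weight $\varepsilon^\beta$, so one cannot afford to lose this weight in any estimate that is propagated to the reduced $(x_1,t)$-domain. This causes no trouble here, however, because the one-dimensional norms on $(0,\ell)$ involve only the longitudinal derivative $\partial_{x_1}$ after averaging over $\varepsilon\varpi$, and the non-weighted bound on $\partial_{x_1}U_\varepsilon$ (resp.\ $\partial_{x_1}W_\varepsilon$) is precisely what Theorem~\ref{Th_5-1} supplies at the sharp rate $\varepsilon^{\aleph_{\alpha,\beta}}$.
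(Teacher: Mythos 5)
Your proposal is correct and follows essentially the same route as the paper, which derives Corollary~\ref{Corollary5-1} exactly as Corollary~\ref{Corollary3-1}: average over the cross-section $\varepsilon\varpi$ (using the zero-mean condition \eqref{uniq_1} for $u_{\alpha-\beta+1}$ so that $\langle\mathfrak{P}_\varepsilon\rangle_{\varepsilon\varpi}=p_0+\varepsilon^{\alpha-1}p_{\alpha-1}$), apply the Cauchy--Bunyakovsky--Schwarz inequality, insert the estimates of Theorem~\ref{Th_5-1}, and conclude with the embedding $H^1(0,\ell)\hookrightarrow C([0,\ell])$. Your explicit use of the longitudinal Poincar\'e inequality (valid since $U_\varepsilon$ vanishes on $\Upsilon_\varepsilon(0)$ and $\Upsilon_\varepsilon(\ell)$) to recover the $L^2$ bound from the $\partial_{x_1}$-bound, and your observation that only the unweighted longitudinal gradient enters the one-dimensional norms, are exactly the details the paper leaves implicit.
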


\subsection{Subcase $\beta =0$}

This subcase is considered as it provides an opportunity to obtain more precise estimates than those presented in Theorem~\ref{Th_5-1} and Corollary~\ref{Corollary5-1}.
It is obvious that now  the following approximation functions should be considered:
\begin{equation}\label{Anz-P-al}
 \mathfrak{P}_\varepsilon(x,t) = p_0(x_1,t) + \varepsilon^{\alpha  -1} p_{\alpha  -1}(x_1,t)  + \varepsilon^{\alpha  +1} u_{\alpha+1}\Big(x_1, \dfrac{\overline{x}_1}{\varepsilon}, t \Big)
\end{equation}
and
\begin{equation}\label{Anz-S-al}
 \mathfrak{S}_\varepsilon(x,t) = s_0(x_1,t) +  \varepsilon^{\alpha  -1} s_{\alpha  -1}(x_1,t),
\end{equation}
where
$(p_0, s_0)$ and $(p_{\alpha-1}, s_{\alpha-1})$ are  solutions to problems \eqref{limit_prob-p0-s0-el} and \eqref{limit_prob-al-be}, respectively,
$u_{\alpha+1}$
is a solution to the problem \eqref{Neumann u2}.

\begin{theorem}[the case $\alpha > 1, \, \beta =0$]\label{Th_2}
Assume that, in addition to the main assumptions made in Section~\ref{Sec:Statement},
condition \eqref{com-cond-1} is  satisfied.
Then,  there are positive constants $\tilde{C}_0,$ $\tilde{C}_1$ and $\varepsilon_0$ such that for all $\varepsilon\in (0, \varepsilon_0)$
 \begin{equation}\label{app-estimate1-al}
 \tfrac{1}{\sqrt{\upharpoonleft  \Omega_\varepsilon \! \upharpoonright_3}}\,  \max_{t\in \times [0,T]}
 {\|P_\varepsilon(\cdot,t) - \mathfrak{P}_\varepsilon(\cdot,t) \|}_{H^1(\Omega_\varepsilon)}  \le \tilde{C}_0 \,  \varepsilon^{\aleph_\alpha}
\end{equation}
and
\begin{equation}\label{app-estimate2-al}
\tfrac{1}{\sqrt{\upharpoonleft  \Omega_\varepsilon \! \upharpoonright_3}}\,  \max_{t\in \times [0,T]}
{\| S_\varepsilon(\cdot,t) - \mathfrak{S}_\varepsilon(\cdot,t) \|}_{L^2(\Omega_\varepsilon)}
+
\tfrac{1}{\sqrt{\upharpoonleft  \Omega_\varepsilon \! \upharpoonright_3}}\,
{\|S_\varepsilon - \mathfrak{S}_\varepsilon\|}_{L^2(0,T; H^1(\Omega_\varepsilon))} \le
 \tilde{C}_1\, \varepsilon^{\aleph_\alpha},
\end{equation}
where $\aleph_\alpha := \min\{2(\alpha-1), \, \alpha +1\}.$
 \end{theorem}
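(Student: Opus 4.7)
\smallskip

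\noindent\textbf{Proof plan.} The argument follows the template of Theorem~\ref{Th_1} and Theorem~\ref{Th_5-1}, but benefits crucially from the fact that $\beta=0$ renders $\mathbb{K}_\varepsilon$ uniformly positive-definite on $\overline{\Omega}_\varepsilon$, so the full isotropic $H^1$-norm is recovered on the left-hand side of the energy estimates.

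First, I substitute \eqref{Anz-P-al} and \eqref{Anz-S-al} into $(\mathbb{P}_\varepsilon\mathbb{S}_\varepsilon\!)$ and expand nonlinearities by Taylor's formula. Since $(p_0,s_0)$ solves \eqref{limit_prob-p0-s0-el}, $(p_{\alpha-1},s_{\alpha-1})$ solves \eqref{limit_prob-al-be} and $u_{\alpha+1}$ solves \eqref{Neumann u2}, the terms of orders $\varepsilon^{0}$, $\varepsilon^{\alpha-1}$ and $\varepsilon^{\alpha}$ cancel in both \eqref{Ell}--\eqref{Par} and in the Neumann conditions \eqref{bc-cond2}--\eqref{bc-cond3}. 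The interior residuals then split into two groups: contributions of order $\mathcal{O}(\varepsilon^{2(\alpha-1)})$, arising from the quadratic Taylor remainders in $\lambda,\lambda_w,\Lambda,b$ applied to the $\varepsilon^{\alpha-1}s_{\alpha-1}$ perturbation, and contributions of order $\mathcal{O}(\varepsilon^{\alpha+1})$, arising from the longitudinal derivatives of the corrector $u_{\alpha+1}$. On $\Gamma_\varepsilon^T$ the Neumann residual is of order $\mathcal{O}(\varepsilon^{2\alpha-1})$, exactly as in \eqref{Nbc-P-e-al-be}--\eqref{Nbc-S-e-al-be} with $\beta=0$. One obtains for the differences $U_\varepsilon := \mathfrak{P}_\varepsilon - P_\varepsilon$ and $W_\varepsilon := \mathfrak{S}_\varepsilon - S_\varepsilon$ the analogues of \eqref{eq-U-e-al-be}--\eqref{Nbc-W-e-al-be}, together with the homogeneous conditions \eqref{bc-U-e} and \eqref{bc-W-e}.

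Next, I test the equation for $U_\varepsilon$ against $U_\varepsilon$ and integrate over $\Omega_\varepsilon$. Because $\beta=0$, coercivity \eqref{cercitive} produces $c\,\|\nabla U_\varepsilon\|_{L^{2}(\Omega_\varepsilon)}^{2}$ on the left. The drift term $\int_{\Omega_\varepsilon} W_\varepsilon\,\lambda'(\Theta^{(1)}_\varepsilon)\,\mathbb{K}_\varepsilon\nabla\mathfrak{P}_\varepsilon \boldsymbol{\cdot}\nabla U_\varepsilon \, dx$ is controlled by Cauchy's inequality with small parameter, the interior residuals by Poincaré's inequality (applicable thanks to the vanishing traces on $\Upsilon_\varepsilon(0)\cup\Upsilon_\varepsilon(\ell)$), and the boundary residual of order $\varepsilon^{2\alpha-1}$ by the trace inequality \eqref{ineq1}. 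Absorbing the gradient terms yields
\begin{equation*}
\int_{\Omega_\varepsilon}|\nabla U_\varepsilon|^{2}\, dx \;\le\; C\Bigl(\int_{\Omega_\varepsilon} W_\varepsilon^{2}\, dx \,+\, \varepsilon^{2\aleph_\alpha}\Bigr).
\end{equation*}
Testing the equation for $W_\varepsilon$ against $W_\varepsilon$ and integrating over $\Omega_\varepsilon^{\tau}$, I use the regularity \eqref{regular} to obtain coercivity of $\|\nabla W_\varepsilon\|_{L^{2}(\Omega_\varepsilon^{\tau})}$ plus the $L^{2}$-energy, bound the cross-integrals $I_1(\varepsilon)$ and $I_2(\varepsilon)$ (cf.\ \eqref{I-1}--\eqref{I-2}) as in Theorem~\ref{Th_1}, and insert the above estimate for $U_\varepsilon$ into $I_2(\varepsilon)$. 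Handling the $\varepsilon^{2\alpha-1}$ surface residual via \eqref{ineq1} and the interior residuals as before, I arrive at
\begin{equation*}
\int_{\Omega_\varepsilon} W_\varepsilon^{2}(x,\tau)\,dx + \int_{0}^{\tau}\!\!\int_{\Omega_\varepsilon}|\nabla W_\varepsilon|^{2}\,dx\,dt \;\le\; C\Bigl(\varepsilon^{2\aleph_\alpha} + \int_{0}^{\tau}\!\!\int_{\Omega_\varepsilon} W_\varepsilon^{2}\,dx\,dt\Bigr).
\end{equation*}
Gronwall's lemma then gives $\sup_{t\in[0,T]}\|W_\varepsilon(\cdot,t)\|_{L^{2}(\Omega_\varepsilon)}^{2}\le C\,\varepsilon^{2\aleph_\alpha}$; reinserting this into the two preceding bounds and rescaling by $\sqrt{\upharpoonleft\Omega_\varepsilon\upharpoonright_{3}}$ delivers \eqref{app-estimate1-al} and \eqref{app-estimate2-al}.

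The main obstacle is the careful tracking of the competing exponents so that the optimal rate $\aleph_\alpha=\min\{2(\alpha-1),\,\alpha+1\}$ emerges: for $\alpha$ close to $1$ the quadratic Taylor remainder of order $\varepsilon^{2(\alpha-1)}$ dictates the rate, while for large $\alpha$ the longitudinal gradient of $u_{\alpha+1}$ produces the dominant $\varepsilon^{\alpha+1}$ contribution. The $\varepsilon^{2\alpha-1}$ boundary residual must be absorbed through \eqref{ineq1} without spoiling either regime; this is precisely where the hypothesis $\alpha>1$ is used, since otherwise the trace inequality would not close the energy argument. Finally, as already explained in Remark~\ref{remark-4-1}, the case $\beta=0$ is treated separately exactly because the isotropic coercivity of $\mathbb{K}_\varepsilon$ allows the full $H^{1}$-estimate \eqref{app-estimate1-al} rather than the anisotropic weighted version \eqref{app-estimate-U-e-al-be}.
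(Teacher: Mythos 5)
Your overall strategy is the same as the paper's proof of Theorem~\ref{Th_2}: substitute \eqref{Anz-P-al}--\eqref{Anz-S-al}, record the interior residuals of orders $\varepsilon^{2\alpha-2}$ and $\varepsilon^{\alpha+1}$ and the boundary residual of order $\varepsilon^{2\alpha-1}$ (the relations \eqref{eq-P-e-al-be}--\eqref{bc-S-e-al-be} with $\beta=0$), test the equations for $U_\varepsilon$ and $W_\varepsilon$ against $U_\varepsilon$ and $W_\varepsilon$, absorb the surface residual via \eqref{ineq1} and \eqref{bc-U-e}, and close with Gronwall's lemma.

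There is, however, a quantitative slip that, as written, breaks the final step: you lose the measure factor of the thin domain. The residuals are controlled only in the sup-norm (\eqref{est-F-al-be}, \eqref{est-Fhi-al-be}), so when they are integrated against $U_\varepsilon$ or $W_\varepsilon$, Cauchy--Schwarz together with the longitudinal Poincar\'e inequality and \eqref{ineq1} produces an extra factor $\sqrt{\upharpoonleft\Omega_\varepsilon\upharpoonright_3}\sim\varepsilon$: for instance
\begin{equation*}
\varepsilon^{2\alpha-2}\int_{\Omega_\varepsilon}|\mathcal{F}^{(1)}_{1,\varepsilon}|\,|U_\varepsilon|\,dx\le C\,\varepsilon^{2\alpha-1}\Big(\int_{\Omega_\varepsilon}|\nabla U_\varepsilon|^2dx\Big)^{1/2},
\qquad
\varepsilon^{\alpha+1}\int_{\Omega_\varepsilon}|\mathcal{F}^{(1)}_{2,\varepsilon}|\,|U_\varepsilon|\,dx\le C\,\varepsilon^{\alpha+2}\Big(\int_{\Omega_\varepsilon}|\nabla U_\varepsilon|^2dx\Big)^{1/2},
\end{equation*}
and the boundary term behaves like $\varepsilon^{2\alpha-1}\|\nabla U_\varepsilon\|_{L^2(\Omega_\varepsilon)}$. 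After Young's inequality the correct right-hand sides are therefore $C\big(\varepsilon^{4\alpha-2}+\varepsilon^{2\alpha+4}+\int_{\Omega_\varepsilon}W_\varepsilon^2dx\big)$, i.e. $C\big(\upharpoonleft\Omega_\varepsilon\upharpoonright_3\varepsilon^{2\aleph_\alpha}+\int_{\Omega_\varepsilon}W_\varepsilon^2dx\big)$, and Gronwall yields $\max_t\int_{\Omega_\varepsilon}W_\varepsilon^2\,dx\le C\varepsilon^{2+2\aleph_\alpha}$, which after division by $\sqrt{\upharpoonleft\Omega_\varepsilon\upharpoonright_3}$ gives exactly $\varepsilon^{\aleph_\alpha}$. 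You instead state $\int_{\Omega_\varepsilon}|\nabla U_\varepsilon|^2dx\le C(\int_{\Omega_\varepsilon}W_\varepsilon^2dx+\varepsilon^{2\aleph_\alpha})$ and $\sup_t\|W_\varepsilon\|^2_{L^2(\Omega_\varepsilon)}\le C\varepsilon^{2\aleph_\alpha}$; dividing these by $\sqrt{\upharpoonleft\Omega_\varepsilon\upharpoonright_3}\sim\varepsilon$ produces only the rate $\varepsilon^{\aleph_\alpha-1}$, not \eqref{app-estimate1-al}--\eqref{app-estimate2-al}. So either sharpen the intermediate estimates by the factor $\varepsilon^{2}$ (which the thin geometry supplies automatically, as in the paper's derivation of the analogues of \eqref{in-U-e+1+al}, \eqref{est-Phi-2} and \eqref{ine-W-e+2-al}), or the "rescaling" step fails. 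A secondary remark: $\alpha>1$ is needed not only for the exponent bookkeeping, but also to guarantee, for small $\varepsilon$, that $\mathfrak{S}_\varepsilon=s_0+\varepsilon^{\alpha-1}s_{\alpha-1}$ stays uniformly inside $(0,1)$ (cf. \eqref{saturation2}); this is what legitimizes evaluating $\Lambda$, $b$, and their derivatives at the intermediate values $\Theta^{(i)}_\varepsilon$ in the energy identities.
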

\begin{proof}
The proof of the theorem requires some clarifications compared to the proof of Theorem~\ref{Th_1}.
Substituting   functions \eqref{Anz-P-al} and \eqref{Anz-S-al}  into problem $(\mathbb{P}_\varepsilon\mathbb{S}_\varepsilon\!),$  we find the same  relations  as  \eqref{eq-P-e-al-be} -- \eqref{bc-S-e-al-be}, but with $\beta =0.$ The same for the differences
$U_\varepsilon = \mathfrak{P}_\varepsilon - P_\varepsilon$ and $W_\varepsilon = \mathfrak{S}_\varepsilon - {S}_\varepsilon;$ they
satisfy relations \eqref{eq-U-e-al-be} -- \eqref{Nbc-W-e-al-be} in which $\beta =0.$

The main difference with the corresponding stage in the proof of Theorem~\ref{Th_1} is the presence of discrepancies in  boundary conditions
\eqref{Nbc-U-e} and \eqref{Nbc-W-e}. Therefore, taking into account \eqref{est-Fhi-al-be}, the integral
$
\varepsilon^{2\alpha -1} \int_{\Gamma_\varepsilon} |U_\varepsilon|\, d\sigma_x
$
will appear on the right-hand side of inequality \eqref{in-U-e}. Using  \eqref{ineq1} and \eqref{bc-U-e}, we get
$$
\varepsilon^{2\alpha -1} \int_{\Gamma_\varepsilon} |U_\varepsilon|\, d\sigma_x \le
C\, \varepsilon^{2\alpha -1} \sqrt{\int_{\Omega_\varepsilon}|\nabla U_\varepsilon|^2\, dx} .
$$

 As a result,
\begin{equation}\label{in-U-e+1+al}
  \int_{\Omega_\varepsilon} |\nabla U_\varepsilon(x,t)|^2\, dx \le C_1 \Big( \varepsilon^{4\alpha -2} + \varepsilon^{\alpha +2} + \int_{\Omega_\varepsilon} W^2_\varepsilon(x,t)  dx\Big)
  \quad \text{for any} \ \ t \in [0,T].
\end{equation}

Also on the right side of inequality \eqref{ine-W-e} there will be an additional integral, which is estimated as follows
\begin{multline}\label{est-Phi-2}
\bigg|\varepsilon^{2\alpha -1} \int_{0}^{\tau}\int_{\Gamma_\varepsilon} {\Phi}^{(2)}_\varepsilon
\, W_\varepsilon\, d\sigma_x\, dt \bigg|
\stackrel{\eqref{est-Fhi-al-be}}{\le}  C_2 \Big(\varepsilon^{4\alpha -2} + \varepsilon \int_{0}^{\tau}\int_{\Gamma_\varepsilon} W^2_\varepsilon \, d\sigma_x dt\Big)
\\
\stackrel{\eqref{ineq1}}{\le}  C_3 \Big(\varepsilon^{4\alpha -2} + \varepsilon^2 \int_{0}^{\tau}\int_{\Omega_\varepsilon}|\nabla_{\overline{x}_1} W_\varepsilon|^2 dx dt + \int_{0}^{\tau}\int_{\Omega_\varepsilon} W_\varepsilon^2 dx dt\Big).
\end{multline}
We then choose $\varepsilon_0$ such that for all $\varepsilon\in (0, \varepsilon_0)$  the value  $C_0 \, C_3\, \varepsilon^{2}$ is less than a given positive number, so that we can move the integral
$
C_0 \, C_3\, \varepsilon^{2} \, \int_{0}^{\tau}\int_{\Omega_\varepsilon}|\nabla_{\overline{x}_1} W_\varepsilon|^2 dx dt
$
to  the left-hand side of the corresponding inequality \eqref{ine-W-e} and obtain the inequality
\begin{equation}\label{ine-W-e+2-al}
  \int_{\Omega_\varepsilon} W^2_\varepsilon(x,\tau)\, dx + \int_{\Omega^\tau_\varepsilon} |\nabla W_\varepsilon|^2 \, dx dt
    \le C_4 \Big(\varepsilon^{4\alpha -2} + \varepsilon^{\alpha +2} \int_{\Omega^\tau_\varepsilon} W^2_\varepsilon(x,t) \, dx dt\Big)
\end{equation}
for any  $\tau\in (0,T).$ We then repeat the proof of Theorem~\ref{Th_1} to derive estimates \eqref{app-estimate1-al} and \eqref{app-estimate2-al}.
\end{proof}

Just as we proved Corollary~\ref{Corollary3-1}, from inequalities \eqref{app-estimate1-al} and \eqref{app-estimate2-al} we deduce the following corollary.

\begin{corollary}[the case $\alpha >1,  \, \beta =0$]\label{corollary4-1}
 Let $(P_\varepsilon, S_\varepsilon)$ be a weak solution to problem $(\mathbb{P}_\varepsilon\mathbb{S}_\varepsilon\!),$ and
 $(p_0, s_0)$ and $(p_{\alpha-1}, s_{\alpha-1})$ be  solutions to problems \eqref{limit_prob-p0-s0-el} and \eqref{limit_prob-al-be}, respectively.  Then the following estimates hold:
 \begin{equation}\label{est-P-p-al}
   \max_{t\in[0,T]}{\| \langle {P}_\varepsilon \rangle_{\varepsilon\varpi} - p_0 - \varepsilon^{\alpha -1} p_{\alpha -1}\|}_{H^1(0, \ell)}
      + {\| \langle {P}_\varepsilon \rangle_{\varepsilon\varpi} - p_0  - \varepsilon^{\alpha -1} p_{\alpha -1}\|}_{C\left([0, \ell]\times[0,T]\right)} \le C_1 \, \varepsilon^{\aleph_\alpha},
 \end{equation}
 \begin{equation}
   {\| \langle {S}_\varepsilon \rangle_{\varepsilon\varpi} - s_0 - \varepsilon^{\alpha -1} s_{\alpha -1} \|}_{L^2(0,T; H^1(0, \ell))}
      + {\| \langle {S}_\varepsilon \rangle_{\varepsilon\varpi} - s_0 - \varepsilon^{\alpha -1} s_{\alpha -1}\|}_{L^2(0,T; C([0, \ell]))} \le C_2 \, \varepsilon^{\aleph_\alpha}, \label{est-S-s-al}
   \end{equation}
 where $\aleph_\alpha = \min\{2(\alpha-1), \, \alpha +1\}.$
 \end{corollary}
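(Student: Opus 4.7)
The plan is to follow the same pattern as in the proof of Corollary~\ref{Corollary3-1}, only substituting Theorem~\ref{Th_2} for Theorem~\ref{Th_1} and keeping track of which terms in the ansatzes \eqref{Anz-P-al}, \eqref{Anz-S-al} survive after averaging over the cross-section $\varepsilon\varpi$.

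First I would observe that the lower-order profiles $p_0, p_{\alpha-1}, s_0, s_{\alpha-1}$ depend only on $x_1$ and $t$, so they equal their own cross-sectional averages, while the corrector $u_{\alpha+1}(x_1, \overline{x}_1/\varepsilon, t)$ has mean zero by the normalisation \eqref{uniq_1}. Consequently
$$
\langle \mathfrak{P}_\varepsilon \rangle_{\varepsilon\varpi}(x_1,t) = p_0(x_1,t) + \varepsilon^{\alpha-1} p_{\alpha-1}(x_1,t), \qquad \langle \mathfrak{S}_\varepsilon \rangle_{\varepsilon\varpi}(x_1,t) = s_0(x_1,t) + \varepsilon^{\alpha-1} s_{\alpha-1}(x_1,t),
$$
and the same identities hold after applying $\partial_{x_1}$ (since $\partial_{x_1}$ commutes with the cross-sectional average).

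Next I would control the $L^2(0,\ell)$-norm of $\langle P_\varepsilon - \mathfrak{P}_\varepsilon \rangle_{\varepsilon\varpi}$ by Cauchy--Bunyakovsky--Schwarz in the transversal variable exactly as done in the proof of Corollary~\ref{Corollary3-1}, namely
$$
\bigl\|\langle P_\varepsilon - \mathfrak{P}_\varepsilon \rangle_{\varepsilon\varpi}(\cdot,t)\bigr\|_{L^2(0,\ell)}^2 \le \frac{1}{\varepsilon^2\upharpoonleft\!\!\varpi\!\!\upharpoonright_2}\int_0^\ell\int_{\varepsilon\varpi}\bigl(P_\varepsilon - \mathfrak{P}_\varepsilon\bigr)^2 d\overline{x}_1\, dx_1 \le \frac{\upharpoonleft\!\Omega_\varepsilon\!\upharpoonright_3}{\varepsilon^2\upharpoonleft\!\!\varpi\!\!\upharpoonright_2}\,\tilde C_0^2\,\varepsilon^{2\aleph_\alpha},
$$
where the last step uses estimate \eqref{app-estimate1-al}. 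Since $\upharpoonleft\!\Omega_\varepsilon\!\upharpoonright_3 = \ell\,\varepsilon^2\upharpoonleft\!\!\varpi\!\!\upharpoonright_2$, the factors cancel and we obtain the bound $C\,\varepsilon^{\aleph_\alpha}$. The same computation applied to $\partial_{x_1}(P_\varepsilon - \mathfrak{P}_\varepsilon)$ yields the analogous bound for the derivative, giving the $H^1(0,\ell)$-estimate of \eqref{est-P-p-al} uniformly in $t\in[0,T]$. The $C([0,\ell])$-part of \eqref{est-P-p-al} then follows from the continuous embedding $H^1(0,\ell)\hookrightarrow C([0,\ell])$, which is uniform in $t$, so we can replace the outer $\max_{t\in[0,T]}$ with the $C([0,\ell]\times[0,T])$-norm.

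For \eqref{est-S-s-al} I would run exactly the same argument, but this time using both inequalities in \eqref{app-estimate2-al}: the $L^2(\Omega_\varepsilon)$-bound (with $\max_t$) produces the $L^2(0,T;L^2(0,\ell))$-control, while the $L^2(0,T;H^1(\Omega_\varepsilon))$-bound handles $\partial_{x_1}\langle S_\varepsilon - \mathfrak{S}_\varepsilon \rangle_{\varepsilon\varpi}$ after squaring and integrating in $t$. The embedding $H^1(0,\ell)\hookrightarrow C([0,\ell])$ then converts the $L^2(0,T;H^1(0,\ell))$-bound into the $L^2(0,T;C([0,\ell]))$-bound. There is no serious obstacle here; the only point requiring mild care is the correct bookkeeping of the $\frac{1}{\sqrt{\upharpoonleft\Omega_\varepsilon\upharpoonright_3}}$ rescaling appearing in Theorem~\ref{Th_2} so that it cancels precisely the $\varepsilon^{-2}\upharpoonleft\varpi\upharpoonright_2^{-1}$ prefactor produced by Cauchy--Schwarz, yielding the clean exponent $\aleph_\alpha = \min\{2(\alpha-1),\,\alpha+1\}$ in the final estimates.
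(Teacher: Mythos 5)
Your proposal is correct and follows essentially the same route as the paper, which proves this corollary exactly as Corollary~\ref{Corollary3-1} (cross-sectional averaging, the zero-mean property \eqref{uniq_1} of the corrector, Cauchy--Bunyakovsky--Schwarz in the transversal variables, and the embedding $H^1(0,\ell)\hookrightarrow C([0,\ell])$), only replacing the estimates of Theorem~\ref{Th_1} by \eqref{app-estimate1-al} and \eqref{app-estimate2-al}. Your bookkeeping of the $\upharpoonleft\Omega_\varepsilon\upharpoonright_3$ rescaling and the resulting exponent $\aleph_\alpha$ is exactly what the paper intends.
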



 \section{Approximations and estimates for the flow velocities and pressures}\label{Sect-6}

The obtained approximations for the solution to problem $(\mathbb{P}_\varepsilon\mathbb{S}_\varepsilon\!)$ for different values of the parameters $\alpha$ and $\beta$ allow us to return to the Muskat-Leverett two-phase flow model (see equations \eqref{saturation} - \eqref{capilarity}), to construct  approximations for the flow velocities $\overrightarrow{V_\varepsilon},$ $\vec{V}_{w,\varepsilon}$ and $\vec{V}_{o,\varepsilon}$ and for the pressures $P_{w, \varepsilon}$ and   $P_{o, \varepsilon},$ and to prove corresponding asymptotic estimates. As follows from the preceding sections, the best asymptotic estimates are derived  when $\beta = 0.$ Therefore, we will start with this case.

$\blacklozenge$ If $\alpha = 1$ and $\beta =0,$ we propose for the total velocity $\overrightarrow{V_\varepsilon}$  the approximation
\begin{equation}\label{app-total-velocity}
  \overrightarrow{\mathcal{V}_\varepsilon} := -  \left(\begin{matrix}
                                                \lambda(s_0(x_1,t)) \, k_1(x_1) \, \partial_{x_1}p_0(x_1,t)
                                                 \\[4pt]
                                                \varepsilon\, \lambda(s_0(x_1,t)) \, \mathbf{K}(x_1, \frac{\overline{x}_1}{\varepsilon}) \, \big(\nabla_{\overline{\xi}_1} u_2(x_1,\overline{\xi}_1)\big)|_{\overline{\xi}_1 = \frac{\overline{x}_1}{\varepsilon}}
                                              \end{matrix}\right),
\end{equation}
for $\vec{V}_{w,\varepsilon},$ based on \eqref{total3}, the following approximation is offered:
\begin{equation}\label{app-velocity-w}
  \vec{\mathcal{V}}_{w,\varepsilon} := -\left(\begin{matrix}
                                                \Lambda(s_0(x_1,t)) \, k_1(x_1) \, \partial_{x_1}s_0(x_1,t)
                                                 \\ 0 \\ 0
                                              \end{matrix}\right) + b(s_0(x_1,t)) \, \overrightarrow{\mathcal{V}_\varepsilon},
\end{equation}
\begin{equation}\label{app-velocity-o}
  \vec{\mathcal{V}}_{o,\varepsilon} :=  \overrightarrow{\mathcal{V}_\varepsilon} - \vec{\mathcal{V}}_{w,\varepsilon}.
\end{equation}
Here,  the matrix $\mathbf{K}$ is given by \eqref{matrix-K}, $(p_0, s_0)$ is the solution to the limit problem \eqref{limit_parab-prob}, $u_2$ is the solution to problem \eqref{Neumann u2}.

Based on \eqref{mean Pressure} and \eqref{capilarity},  we propose  approximations for the pressures $P_{w, \varepsilon}$ and   $P_{o, \varepsilon}$
in the following form:
\begin{equation}\label{app-pressure-w}
  \mathfrak{P}_{w, \varepsilon} := \mathfrak{P}_{\varepsilon} - \int_{0}^{s_0} \frac{\lambda_o(\eta)}{\lambda(\eta)} \, p_c^\prime(\eta) \,d\eta, \qquad \mathfrak{P}_{o, \varepsilon} := \mathfrak{P}_{w, \varepsilon} + p_c(s_0),
\end{equation}
where $\mathfrak{P}_{\varepsilon}$ is taken from \eqref{AppFunctions}.

\begin{lemma}[$\alpha = 1, \, \beta =0$]\label{lemma-6-1}
  Suppose that the conditions of Theorem~\ref{Th_1} are satisfied. Then, there are positive constants $\tilde{C}_1,$ $\tilde{C}_2,$ $\tilde{C}_3$
   and $\varepsilon_0$ such that for all $\varepsilon\in (0, \varepsilon_0)$
  \begin{equation}\label{app-estimate3}
\tfrac{1}{\sqrt{\upharpoonleft  \Omega_\varepsilon \! \upharpoonright_3}}\,  \max_{t\in \times [0,T]}
{\left\| \overrightarrow{{V}_\varepsilon}(\cdot,t) - \overrightarrow{\mathcal{V}_\varepsilon}(\cdot,t) \right\|}_{(L^2(\Omega_\varepsilon))^3}
 \le
 \tilde{C}_1\, \varepsilon^2,
\end{equation}
\begin{equation}\label{app-estimate4}
\tfrac{1}{\sqrt{\upharpoonleft  \Omega_\varepsilon \! \upharpoonright_3}}\,
{\left\| \vec{V}_{w,\varepsilon} - \vec{\mathcal{V}}_{w,\varepsilon} \right\|}_{(L^2(\Omega^T_\varepsilon)^3}
 \le  \tilde{C}_2\, \varepsilon^2, \qquad  \tfrac{1}{\sqrt{\upharpoonleft  \Omega_\varepsilon \! \upharpoonright_3}}\,
{\left\| \vec{V}_{o,\varepsilon} - \vec{\mathcal{V}}_{o,\varepsilon} \right\|}_{(L^2(\Omega^T_\varepsilon)^3}
 \le  \tilde{C}_2\, \varepsilon^2,
\end{equation}
\begin{equation}
\tfrac{1}{\sqrt{\upharpoonleft  \Omega_\varepsilon \! \upharpoonright_3}}\,
{\|P_{w, \varepsilon} - \mathfrak{P}_{w,\varepsilon}\|}_{L^2(0,T; H^1(\Omega_\varepsilon))} \le \tilde{C}_3\, \varepsilon^2,
\qquad
\tfrac{1}{\sqrt{\upharpoonleft  \Omega_\varepsilon \! \upharpoonright_3}}\,
{\|P_{o, \varepsilon} - \mathfrak{P}_{o,\varepsilon}\|}_{L^2(0,T; H^1(\Omega_\varepsilon))}
 \le  \tilde{C}_3\, \varepsilon^2. \label{app-estimate-pressures}
\end{equation}
\end{lemma}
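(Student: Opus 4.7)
The strategy is to reduce each of the six desired estimates to the bounds already proved in Theorem~\ref{Th_1} for $U_\varepsilon = \mathfrak{P}_\varepsilon - P_\varepsilon$ and $W_\varepsilon = s_0 - S_\varepsilon$, by exploiting the $C^1$-smoothness (hence local Lipschitz property) of the nonlinearities $\lambda,\Lambda,b,p_c,\lambda_o/\lambda$ on the compact interval $[\delta_0,\delta_1]\subset(0,1)$ in which the saturations $S_\varepsilon$ and $s_0$ both lie by \eqref{saturation1}, and the uniform pointwise boundedness of $\nabla p_0$, $\nabla s_0$, $\nabla_{\overline{\xi}_1}u_2$ which follows from Proposition~\ref{Prop3-1} and the regularity of $u_2$ as solution to the Neumann problem~\eqref{Neumann u2}.

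First, I would verify the identity $\overrightarrow{\mathcal{V}_\varepsilon}=-\lambda(s_0)\,\mathbb{K}_\varepsilon\nabla\bigl(p_0+\varepsilon^2 u_2(x_1,\overline{x}_1/\varepsilon,t)\bigr)$ \emph{up to} the higher-order term $-\varepsilon^2\lambda(s_0)k_1\partial_{x_1}u_2\,\vec e_1$, which is itself $\mathcal O(\varepsilon^2)$ in $L^\infty$. Then I write
\begin{equation*}
\overrightarrow{V_\varepsilon}-\overrightarrow{\mathcal{V}_\varepsilon}
= -\lambda(S_\varepsilon)\mathbb{K}_\varepsilon\nabla U_\varepsilon
 -\bigl(\lambda(S_\varepsilon)-\lambda(s_0)\bigr)\mathbb{K}_\varepsilon\nabla\mathfrak{P}_\varepsilon
 +\varepsilon^2\lambda(s_0)k_1\partial_{x_1}u_2\,\vec e_1,
\end{equation*}
apply the Lipschitz bound $|\lambda(S_\varepsilon)-\lambda(s_0)|\le L|W_\varepsilon|$, use that $|\mathbb{K}_\varepsilon\nabla\mathfrak{P}_\varepsilon|$ is uniformly bounded (since $\beta=0$ and $\nabla\mathfrak{P}_\varepsilon$ is $L^\infty$), and combine with \eqref{app-estimate1} and \eqref{app-estimate2} to deduce \eqref{app-estimate3}.

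Second, for the phase velocity $\vec V_{w,\varepsilon}$ I write, using \eqref{total3} and the analogous formula for $\vec{\mathcal{V}}_{w,\varepsilon}$,
\begin{equation*}
\vec V_{w,\varepsilon}-\vec{\mathcal{V}}_{w,\varepsilon}
= \Lambda(S_\varepsilon)\mathbb{K}_\varepsilon\nabla(S_\varepsilon-s_0)
 +\bigl(\Lambda(S_\varepsilon)-\Lambda(s_0)\bigr)\mathbb{K}_\varepsilon\nabla s_0
 -b(S_\varepsilon)\bigl(\overrightarrow{V_\varepsilon}-\overrightarrow{\mathcal{V}_\varepsilon}\bigr)
 -\bigl(b(S_\varepsilon)-b(s_0)\bigr)\overrightarrow{\mathcal{V}_\varepsilon}.
\end{equation*}
The first term is controlled by $\|\nabla W_\varepsilon\|_{L^2(\Omega_\varepsilon^T)}$, which is $\mathcal O(\varepsilon^2\sqrt{|\Omega_\varepsilon|_3})$ by \eqref{app-estimate3} of Theorem~\ref{Th_1}; the other three use Lipschitz continuity, the uniform $L^\infty$ bound of $\nabla s_0$ and of $\overrightarrow{\mathcal{V}_\varepsilon}$, and the already obtained \eqref{app-estimate3} of the present lemma, giving \eqref{app-estimate4} after integration in time (this is why the norm here is $L^2(\Omega_\varepsilon^T)$ rather than $L^\infty_t L^2_x$). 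The o-phase estimate then follows from $\vec V_{o,\varepsilon}-\vec{\mathcal{V}}_{o,\varepsilon}=(\overrightarrow{V_\varepsilon}-\overrightarrow{\mathcal{V}_\varepsilon})-(\vec V_{w,\varepsilon}-\vec{\mathcal{V}}_{w,\varepsilon})$.

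Third, for the phase pressures I use
\begin{equation*}
P_{w,\varepsilon}-\mathfrak{P}_{w,\varepsilon} = -U_\varepsilon - \int_{s_0}^{S_\varepsilon}\frac{\lambda_o(\eta)}{\lambda(\eta)}\,p_c'(\eta)\,d\eta,
\qquad P_{o,\varepsilon}-\mathfrak{P}_{o,\varepsilon}=(P_{w,\varepsilon}-\mathfrak{P}_{w,\varepsilon})+\bigl(p_c(S_\varepsilon)-p_c(s_0)\bigr).
\end{equation*}
The $L^2$ part of the $H^1$-norm is immediately bounded by $\|U_\varepsilon\|_{L^2}+C\|W_\varepsilon\|_{L^2}$. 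For the gradient I differentiate the integral, obtaining
\begin{equation*}
\tfrac{\lambda_o(S_\varepsilon)}{\lambda(S_\varepsilon)}\,p_c'(S_\varepsilon)\,\nabla(S_\varepsilon-s_0)
+\Bigl[\tfrac{\lambda_o(S_\varepsilon)}{\lambda(S_\varepsilon)}\,p_c'(S_\varepsilon)-\tfrac{\lambda_o(s_0)}{\lambda(s_0)}\,p_c'(s_0)\Bigr]\nabla s_0,
\end{equation*}
and bound the first summand in $L^2(\Omega_\varepsilon^T)$ by $\|\nabla W_\varepsilon\|_{L^2(\Omega_\varepsilon^T)}$ and the second by $L\|W_\varepsilon\|_{L^2(\Omega_\varepsilon^T)}\,\|\nabla s_0\|_{L^\infty}$. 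Combining with the bound on $\|\nabla U_\varepsilon\|_{L^2(\Omega_\varepsilon^T)}$ and on $\nabla p_c(s_0)\cdot W_\varepsilon$-type terms coming from the o-phase formula gives \eqref{app-estimate-pressures}. The main technical point to watch is that for the pressures the $H^1$-norm is available only after integration in time, because controlling $\|\nabla W_\varepsilon\|_{L^2(\Omega_\varepsilon)}$ uniformly in $t$ is not provided by Theorem~\ref{Th_1}; this is precisely why the lemma uses the space $L^2(0,T;H^1(\Omega_\varepsilon))$ rather than $L^\infty(0,T;H^1(\Omega_\varepsilon))$.
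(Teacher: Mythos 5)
Your proposal is correct and follows essentially the same route as the paper's proof: decompose $\overrightarrow{V_\varepsilon}-\overrightarrow{\mathcal{V}_\varepsilon}$, $\vec V_{w,\varepsilon}-\vec{\mathcal{V}}_{w,\varepsilon}$ and $P_{w,\varepsilon}-\mathfrak{P}_{w,\varepsilon}$ via the explicit formulas, use mean-value/Lipschitz bounds for $\lambda,\Lambda,b,p_c,\lambda_o/\lambda$ on $[\delta_0,\delta_1]$ together with the uniform boundedness of $\nabla p_0$, $\nabla s_0$, $\nabla_{\overline{\xi}_1}u_2$, and then insert the estimates of Theorem~\ref{Th_1} (including its $L^2(0,T;H^1)$ bound on $W_\varepsilon$, which is exactly why the $w$- and $o$-phase bounds are only in $L^2(\Omega_\varepsilon^T)$). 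The only blemishes are harmless sign slips in your two velocity decompositions (e.g.\ the term $\lambda(S_\varepsilon)\mathbb{K}_\varepsilon\nabla U_\varepsilon$ enters with a plus sign since $P_\varepsilon=\mathfrak{P}_\varepsilon-U_\varepsilon$), which do not affect the norm estimates.
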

\begin{proof} {\bf 1.} According to \eqref{differences-U-W} we have
\begin{multline}\label{AAA}
  \overrightarrow{{V}_\varepsilon} = - \lambda(S_\varepsilon) \, \mathbb{K}_\varepsilon(x)  \nabla P_\varepsilon  = - \lambda(s_0 - W_\varepsilon) \, \mathbb{K}_\varepsilon(x)  \nabla(\mathfrak{P}_\varepsilon - U_\varepsilon)
   \\
  =    - \lambda(s_0) \, \mathbb{K}_\varepsilon \nabla\mathfrak{P}_\varepsilon + \lambda(s_0) \, \mathbb{K}_\varepsilon  \nabla  U_\varepsilon +  \lambda'(\Theta^{(1)}_\varepsilon)\, W_\varepsilon \, \mathbb{K}_\varepsilon  \nabla\mathfrak{P}_\varepsilon -
 \lambda'(\Theta^{(1)}_\varepsilon)\, W_\varepsilon \, \mathbb{K}_\varepsilon  \nabla U_\varepsilon
 \\
  = \overrightarrow{\mathcal{V}_\varepsilon} - \varepsilon^2  \left(\begin{matrix}
                                                \lambda(s_0) \, k_1 \, \partial_{x_1}u_2
                                                 \\ 0 \\ 0
                                              \end{matrix}\right)
 + \lambda(s_0) \, \mathbb{K}_\varepsilon  \nabla  U_\varepsilon
  \\
  +  \lambda'(\Theta^{(1)}_\varepsilon)\, W_\varepsilon \, \mathbb{K}_\varepsilon  \nabla\mathfrak{P}_\varepsilon -
 \lambda'(\Theta^{(1)}_\varepsilon)\, W_\varepsilon \, \mathbb{K}_\varepsilon  \nabla U_\varepsilon.
 \end{multline}
 Now, using estimates \eqref{app-estimate1} and \eqref{app-estimate2} and taking into account \eqref{saturation1} and \eqref{limit-saturation}, we derive  estimate \eqref{app-estimate3} from \eqref{AAA}.

 In the same way, we deduce
 \begin{align}\label{rep-2}
  \vec{V}_{w,\varepsilon} &= - \Lambda(S_\varepsilon) \, \mathbb{K}_\varepsilon(x)  \nabla S_\varepsilon  + b(S_\varepsilon) \, \overrightarrow{{V}_\varepsilon}
  \\
   & = \Lambda(s_0 - W_\varepsilon) \, \mathbb{K}_\varepsilon(x)  \nabla(s_0 - W_\varepsilon)  + b(s_0 - W_\varepsilon) \, \overrightarrow{{V}_\varepsilon}\notag
   \\
   & = \vec{\mathcal{V}}_{w,\varepsilon} + \Lambda(s_0 ) \, \mathbb{K}_\varepsilon  \nabla W_\varepsilon
   +  \Lambda'(\Theta^{(2)}_\varepsilon)\, W_\varepsilon \, \mathbb{K}_\varepsilon\nabla s_0
   - \Lambda'(\Theta^{(2)}_\varepsilon)\, W_\varepsilon \, \mathbb{K}_\varepsilon\nabla W_\varepsilon \notag
   \\& \ \ \ + b(s_0) \big(\overrightarrow{{V}_\varepsilon} -   \overrightarrow{\mathcal{V}_\varepsilon}\big)
   - b'(\Theta^{(3)}_\varepsilon) \, W_\varepsilon \, \overrightarrow{\mathcal{V}_\varepsilon}
      - b'(\Theta^{(3)}_\varepsilon) \, W_\varepsilon \, \big(\overrightarrow{{V}_\varepsilon} -   \overrightarrow{\mathcal{V}_\varepsilon}\big).
      \notag
  \end{align}
  Using the same inequalities as before and inequality \eqref{app-estimate3}, we get the first estimate in \eqref{app-estimate4} from equality \eqref{rep-2}. The second estimate in \eqref{app-estimate4} follows from the previous estimates, from \eqref{app-velocity-o} and from the equality $\vec{V}_{o, \varepsilon} = \overrightarrow{V}_\varepsilon - \vec{V}_{w,\varepsilon}.$

 {\bf 2.} From \eqref{mean Pressure} we deduce that
 \begin{equation}\label{diff-presure-w}
  P_{w, \varepsilon} - \mathfrak{P}_{w, \varepsilon} = P_{\varepsilon} - \mathfrak{P}_{\varepsilon} - \int_{s_0(x_1,t)}^{S_\varepsilon(x,t)} f(\eta)\, d\eta,
 \end{equation}
where $f(\eta):= \frac{\lambda_o(\eta)}{\lambda(\eta)} \, p_c^\prime(\eta).$ Taking into account the assumptions for the relative permeabilities and the capillary pressure (see \eqref{Leveret} - \eqref{total mobillity}), $f$ and its derivative are smooth and bounded in the domain of definition.

It is easy to see that
\begin{equation}\label{inequa1}
  \int_{\Omega^T_\varepsilon} \bigg(\int_{s_0(x_1,t)}^{S_\varepsilon(x,t)} f(\eta)\, d\eta\bigg)^2 dxdt \le C_1  \int_{\Omega^T_\varepsilon} \big(S_\varepsilon(x,t) - s_0(x_1,t)\big)^2 dxdt,
\end{equation}
\begin{equation}\label{inequa2}
  \nabla_{\overline{x}_1}\bigg(\int_{s_0(x_1,t)}^{S_\varepsilon(x,t)} f(\eta)\, d\eta\bigg) = f(S_\varepsilon)\, \nabla_{\overline{x}_1}S_\varepsilon(x,t),
\end{equation}
\begin{align}\label{inequa3}
  \partial_{x_1}\bigg(\int_{s_0(x_1,t)}^{S_\varepsilon(x,t)} f(\eta)\, d\eta\bigg)& = f(S_\varepsilon)\, \partial_{x_1}S_\varepsilon(x,t) -
  f(s_0)\, \partial_{x_1}s_0(x_1,t)\notag
  \\
 & = f(S_\varepsilon)\, \big(\partial_{x_1}S_\varepsilon - \partial_{x_1}s_0\big) + \partial_{x_1}s_0 \, f'(\Theta_\varepsilon)\, \big(S_\varepsilon - s_0\big).
\end{align}

Considering \eqref{inequa1} - \eqref{inequa3} and using \eqref{app-estimate1} and \eqref{app-estimate2}, we derive  the first estimate in \eqref{app-estimate-pressures} from \eqref{diff-presure-w}. The second one follows from \eqref{capilarity},  the equality
$$
P_{o, \varepsilon} - \mathfrak{P}_{o, \varepsilon} = P_{w, \varepsilon} - \mathfrak{P}_{w, \varepsilon} + p'_c(\Theta_\varepsilon^{(1)}) \, \big(S_\varepsilon - s_0\big)
$$
and the boundedness of $p'_c.$
\end{proof}

\smallskip

$\blacklozenge$ If  $\alpha > 1$ and $\beta =0,$ the approximations of
the velocities are as follows
\begin{equation}\label{app-total-velocity-al}
  \overrightarrow{\mathcal{V}_\varepsilon} := -  \left(\begin{matrix}
                                                \lambda(s_0) \, k_1 \, \partial_{x_1}p_0 + \varepsilon^{\alpha -1}\big(\lambda(s_0) \, k_1 \, \partial_{x_1}p_{\alpha -1} +
                                                \lambda'(s_0) \, k_1 \, \partial_{x_1}p_0 \, s_{\alpha -1}\big)
                                                 \\[4pt]
                                                \varepsilon^\alpha \, \lambda(s_0(x_1,t)) \, \mathbf{K}(x_1, \frac{\overline{x}_1}{\varepsilon}) \, \big(\nabla_{\overline{\xi}_1} u_{\alpha +1}(x_1,\overline{\xi}_1)\big)|_{\overline{\xi}_1 = \frac{\overline{x}_1}{\varepsilon}}
                                              \end{matrix}\right),
\end{equation}
\begin{multline}\label{app-velocity-w-al}
  \vec{\mathcal{V}}_{w,\varepsilon} := -\left(\begin{matrix}
                                                \Lambda(s_0) \, k_1 \, \partial_{x_1}s_0
                                                + \varepsilon^{\alpha -1}\big(\Lambda(s_0) \, k_1 \, \partial_{x_1}s_{\alpha -1} +
                                                \Lambda'(s_0) \, k_1 \, \partial_{x_1}s_0 \, s_{\alpha -1}\big)
                                                 \\ 0 \\ 0
                                              \end{matrix}\right)
        \\
   + \big(b(s_0)  + \varepsilon^{\alpha -1} b'(s_0) \, s_{\alpha-1}\big) \, \overrightarrow{\mathcal{V}_\varepsilon},
\end{multline}
where $(p_0, s_0)$ and $(p_{\alpha-1}, s_{\alpha-1})$ are  solutions to problems \eqref{limit_prob-p0-s0-el} and \eqref{limit_prob-al-be}, respectively, $u_{\alpha+1}$ is a solution to problem \eqref{Neumann u2}, and for the pressure ${P}_{w, \varepsilon}$ is
\begin{equation}\label{app-pressure-w-al}
  \mathfrak{P}_{w, \varepsilon} := \mathfrak{P}_{\varepsilon} - \int_{0}^{\mathfrak{S}_{\varepsilon}} \frac{\lambda_o(\eta)}{\lambda(\eta)} \, p_c^\prime(\eta) \,d\eta,
\end{equation}
where $\mathfrak{P}_{\varepsilon}$ and $\mathfrak{S}_{\varepsilon}$ are now determined in \eqref{Anz-P-al} and \eqref{Anz-S-al}, respectively.

 Now, using the approximation formulas \eqref{app-estimate1-al} and \eqref{app-estimate2-al} similarly to the previous one, we prove the lemma.
\begin{lemma}[$\alpha >1,$ $\beta =0$]\label{cor-4-2}
Let $\overrightarrow{\mathcal{V}_\varepsilon}$ and $\vec{\mathcal{V}}_{w,\varepsilon}$  be vector-valued functions determined by formulas \eqref{app-total-velocity-al} and \eqref{app-velocity-w-al} respectively,
and  $ \mathfrak{P}_{w, \varepsilon}$ be a function defined by \eqref{app-pressure-w-al}.
Then the following asymptotic estimates hold:
  \begin{equation}\label{app-estimate5}
\tfrac{1}{\sqrt{\upharpoonleft  \Omega_\varepsilon \! \upharpoonright_3}}\,  \max_{t\in \times [0,T]}
{\left\| \overrightarrow{{V}_\varepsilon}(\cdot,t) - \overrightarrow{\mathcal{V}_\varepsilon}(\cdot,t)\right\|}_{(L^2(\Omega_\varepsilon))^3}
 + \tfrac{1}{\sqrt{\upharpoonleft  \Omega_\varepsilon \! \upharpoonright_3}}\,
{\left\| \vec{V}_{w,\varepsilon} - \vec{\mathcal{V}}_{w,\varepsilon} \right\|}_{(L^2(\Omega^T_\varepsilon)^3}
 \le \tilde{C}_0 \,  \varepsilon^{\aleph_\alpha},
\end{equation}
\begin{equation}\label{app-estimate6}
\tfrac{1}{\sqrt{\upharpoonleft  \Omega_\varepsilon \! \upharpoonright_3}}\,
{\|P_{w, \varepsilon} - \mathfrak{P}_{w,\varepsilon}\|}_{L^2(0,T; H^1(\Omega_\varepsilon))} \le  \tilde{C}_1\, \varepsilon^{\aleph_\alpha},
\end{equation}
where $\aleph_\alpha := \min\{2(\alpha-1), \, \alpha +1\}.$
\end{lemma}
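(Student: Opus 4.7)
The plan is to follow the template of Lemma~\ref{lemma-6-1} but now draw on the sharper approximation result Theorem~\ref{Th_2} (in the form \eqref{app-estimate1-al}--\eqref{app-estimate2-al}) and Taylor-expand $\lambda$, $\Lambda$ and $b$ to one order more in the correction $\varepsilon^{\alpha-1}s_{\alpha-1}$, so that the linearized parts of these expansions are precisely the coefficients that appear in \eqref{app-total-velocity-al}--\eqref{app-velocity-w-al}. First I would write $P_\varepsilon=\mathfrak{P}_\varepsilon-U_\varepsilon$ and $S_\varepsilon=\mathfrak{S}_\varepsilon-W_\varepsilon$ and decompose
\begin{equation*}
\overrightarrow{V_\varepsilon}=-\lambda(\mathfrak{S}_\varepsilon-W_\varepsilon)\,\mathbb{K}_\varepsilon\nabla(\mathfrak{P}_\varepsilon-U_\varepsilon).
\end{equation*}
Expanding $\lambda(\mathfrak{S}_\varepsilon)=\lambda(s_0)+\varepsilon^{\alpha-1}\lambda'(s_0)s_{\alpha-1}+\varepsilon^{2(\alpha-1)}R_\varepsilon$ with $R_\varepsilon$ bounded (since $\alpha>1$ and the saturations satisfy \eqref{saturation2}), and using the block structure of $\mathbb{K}_\varepsilon$ with $\beta=0$ together with $\nabla\mathfrak{P}_\varepsilon=(\partial_{x_1}p_0+\varepsilon^{\alpha-1}\partial_{x_1}p_{\alpha-1}+\varepsilon^{\alpha+1}\partial_{x_1}u_{\alpha+1},\,\varepsilon^\alpha\nabla_{\overline\xi_1}u_{\alpha+1})$, I get
\begin{equation*}
-\lambda(\mathfrak{S}_\varepsilon)\,\mathbb{K}_\varepsilon\nabla\mathfrak{P}_\varepsilon=\overrightarrow{\mathcal{V}_\varepsilon}+\varepsilon^{\aleph_\alpha}\vec{r}_\varepsilon,
\end{equation*}
where the residual $\vec{r}_\varepsilon$ is pointwise bounded by the smoothness of $p_0,p_{\alpha-1},s_0,s_{\alpha-1},u_{\alpha+1}$. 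The remaining contributions $\lambda(S_\varepsilon)\mathbb{K}_\varepsilon\nabla U_\varepsilon$ and $\lambda'(\Theta^{(1)}_\varepsilon)W_\varepsilon\mathbb{K}_\varepsilon\nabla\mathfrak{P}_\varepsilon$ (with the latter also containing a lower-order factor $\mathbb{K}_\varepsilon\nabla U_\varepsilon$) are estimated in $L^2(\Omega_\varepsilon)^3$ by the Cauchy--Schwarz inequality using \eqref{app-estimate1-al} and \eqref{app-estimate2-al}; each yields a contribution of order $\varepsilon^{\aleph_\alpha}$ after the rescaling $\tfrac{1}{\sqrt{\upharpoonleft\Omega_\varepsilon\upharpoonright_3}}$. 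This delivers the first part of \eqref{app-estimate5}.

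For $\vec{V}_{w,\varepsilon}$ I use the representation $\vec{V}_{w,\varepsilon}=\Lambda(S_\varepsilon)\mathbb{K}_\varepsilon\nabla S_\varepsilon+b(S_\varepsilon)\overrightarrow{V_\varepsilon}$ from \eqref{total3} (with the sign convention of the paper) and perform the analogous Taylor expansions of $\Lambda$ and $b$ around $s_0$, keeping terms up to and including $\varepsilon^{\alpha-1}$, so that the leading block matches \eqref{app-velocity-w-al} exactly. The cross-terms involving products of $W_\varepsilon$ (or $\nabla W_\varepsilon$) with $\nabla s_0$, $\nabla s_{\alpha-1}$ and $\overrightarrow{\mathcal{V}_\varepsilon}$ are controlled via \eqref{app-estimate2-al}, while the term involving $\nabla W_\varepsilon$ itself is estimated in the $L^2(0,T;L^2(\Omega_\varepsilon))$-norm, which is why the estimate in \eqref{app-estimate5} is stated over $\Omega_\varepsilon^T$ rather than pointwise in time. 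The estimate for $\vec{V}_{o,\varepsilon}$, if needed, would follow from the identity $\vec{V}_{o,\varepsilon}=\overrightarrow{V_\varepsilon}-\vec{V}_{w,\varepsilon}$ exactly as in Lemma~\ref{lemma-6-1}.

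For the pressure estimate \eqref{app-estimate6}, set $f(\eta)=\tfrac{\lambda_o(\eta)}{\lambda(\eta)}p_c'(\eta)$, which together with $f'$ is smooth and bounded by \eqref{Leveret}--\eqref{total mobillity}. From \eqref{mean Pressure} and \eqref{app-pressure-w-al},
\begin{equation*}
P_{w,\varepsilon}-\mathfrak{P}_{w,\varepsilon}=(P_\varepsilon-\mathfrak{P}_\varepsilon)-\int_{\mathfrak{S}_\varepsilon}^{S_\varepsilon}f(\eta)\,d\eta,
\end{equation*}
and the integral is differentiated in $x$ by the chain rule exactly as in \eqref{inequa2}--\eqref{inequa3}, producing terms controlled by $|W_\varepsilon|$, $|\nabla W_\varepsilon|$ and the smooth, bounded gradients of $\mathfrak{S}_\varepsilon$. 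Combining the $H^1$-estimate for $U_\varepsilon=P_\varepsilon-\mathfrak{P}_\varepsilon$ from \eqref{app-estimate1-al} with the $L^2(0,T;H^1(\Omega_\varepsilon))$-estimate for $W_\varepsilon$ from \eqref{app-estimate2-al} yields \eqref{app-estimate6}.

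The main obstacle is bookkeeping rather than any single hard step: one must verify that after the Taylor expansions of $\lambda$, $\Lambda$ and $b$ to second order, every quadratic remainder of the form $\varepsilon^{2(\alpha-1)}$ is dominated by $\varepsilon^{\aleph_\alpha}=\varepsilon^{\min\{2(\alpha-1),\alpha+1\}}$, and that the transversal gradients $\nabla_{\overline{x}_1}u_{\alpha+1}=\varepsilon^{-1}\nabla_{\overline{\xi}_1}u_{\alpha+1}|_{\overline{\xi}_1=\overline{x}_1/\varepsilon}$ are paired with the correct power of $\varepsilon$ coming from $\mathbb{K}_\varepsilon$ (here $\beta=0$, so no extra smallness) to yield precisely the columns of \eqref{app-total-velocity-al}. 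Once the algebraic matching is in place, the analytic estimates are exactly those used in the proof of Lemma~\ref{lemma-6-1}, with the exponent $2$ systematically replaced by $\aleph_\alpha$.
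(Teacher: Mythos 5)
Your proposal is correct and follows essentially the route the paper intends: the paper proves this lemma by repeating the argument of Lemma~\ref{lemma-6-1} with the estimates \eqref{app-estimate1-al}--\eqref{app-estimate2-al} of Theorem~\ref{Th_2} in place of those of Theorem~\ref{Th_1}, which is exactly your scheme, including the extra first-order Taylor expansion of $\lambda$, $\Lambda$, $b$ in $\varepsilon^{\alpha-1}s_{\alpha-1}$ needed to match the linearized terms in \eqref{app-total-velocity-al}--\eqref{app-velocity-w-al} and the bookkeeping showing all remainders are $\mathcal{O}(\varepsilon^{\aleph_\alpha})$. (Only a cosmetic slip: by \eqref{total3} one has $\vec{V}_{w,\varepsilon}=-\Lambda(S_\varepsilon)\,\mathbb{K}_\varepsilon\nabla S_\varepsilon+b(S_\varepsilon)\,\overrightarrow{V}_\varepsilon$, i.e.\ with a minus sign, which does not affect your argument.)
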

\begin{remark}\label{Rem-6-1}
An  approximation for  $\vec{V}_{o,\varepsilon}$ is determined by formula \eqref{app-velocity-o} and
for ${P}_{o, \varepsilon}$ by the formula $\mathfrak{P}_{o, \varepsilon} := \mathfrak{P}_{w, \varepsilon} + p_c(\mathfrak{S}_{\varepsilon}).$
The corresponding asymptotic estimates follow from these formulas and estimates \eqref{app-estimate5}, \eqref{app-estimate6} and \eqref{app-estimate2-al}. They have the same order. Therefore, we do not write them out.
\end{remark}

\smallskip

$\blacklozenge$ If  $\alpha =1,$ $\beta < 2$ and $\beta \neq  0,$ the approximation for  the total velocity $\overrightarrow{V_\varepsilon}$
is given by the vector-function $\overrightarrow{\mathcal{V}_\varepsilon} = \big(\lambda(s_0) \, k_1 \, \partial_{x_1}p_0, 0, 0\big)^\top$ $(\top$ is the sign of transposition), and the approximations for the  velocity $\vec{V}_{w,\varepsilon}$ and the pressure $P_{w,\varepsilon}$ are defined by formulas \eqref{app-velocity-w} and \eqref{app-pressure-w}, respectively, where $\mathfrak{P}_\varepsilon$ is now determined by
\eqref{Anz-P-be}. For the same reasons as in Remark~\ref{Rem-6-1}, we do not mention the functions $\vec{V}_{o,\varepsilon}$ and $P_{o,\varepsilon}.$

\begin{lemma}[$\alpha =1,$ $\beta < 2$ and $\beta \neq  0$]\label{lemma-6-3}
 Suppose that the conditions of Theorem~\ref{Th_1+} are satisfied, and denote by $U_{w,\varepsilon}$ the  differences $P_{w,\varepsilon} - \mathfrak{P}_{w,\varepsilon}.$
Then the following asymptotic estimates hold:
\begin{equation}\label{app-estimateU-w-be}
\tfrac{1}{\sqrt{\upharpoonleft  \Omega_\varepsilon \! \upharpoonright_3}}\,
\sqrt{ \int_{0}^{T} \int_{\Omega_\varepsilon} |\partial_{x_1} U_{w,\varepsilon}|^2 \, dx dt
+ \varepsilon^\beta \int_{0}^{T} \int_{\Omega_\varepsilon} |\nabla_{\overline{x}_1}U_{w,\varepsilon}|^2 \, dx dt} \le
 \tilde{C}_1\, \varepsilon^{1 - \frac{\beta}{2}},
\end{equation}
  \begin{equation}\label{app-estimate7}
\tfrac{1}{\sqrt{\upharpoonleft  \Omega_\varepsilon \! \upharpoonright_3}}\,  \max_{t\in \times [0,T]}
{\| \overrightarrow{{V}_\varepsilon}(\cdot,t) - \overrightarrow{\mathcal{V}_\varepsilon}(\cdot,t) \|}_{(L^2(\Omega_\varepsilon))^3}
+ \tfrac{1}{\sqrt{\upharpoonleft  \Omega_\varepsilon \! \upharpoonright_3}}\,
{\| \vec{V}_{w,\varepsilon} - \vec{\mathcal{V}}_{w,\varepsilon} \|}_{(L^2(\Omega^T_\varepsilon)^3}
 \le
 \tilde{C}_2\, \varepsilon^{\aleph_\beta},
\end{equation}
where $\aleph_\beta := \min\{1 - \frac{\beta}{2}, \, 1\}.$
\end{lemma}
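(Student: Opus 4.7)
The plan is to adapt the proof of Lemma~\ref{lemma-6-1} to the anisotropic situation, where the transverse block of the permeability is weighted by $\varepsilon^{\beta}$ and only a single corrector $\varepsilon^{2-\beta}u_{2-\beta}$ appears in the pressure ansatz \eqref{Anz-P-be}. As a first step, I would substitute $S_{\varepsilon}=s_{0}-W_{\varepsilon}$ and $P_{\varepsilon}=\mathfrak{P}_{\varepsilon}-U_{\varepsilon}$ into Darcy's law $\overrightarrow{V}_{\varepsilon}=-\lambda(S_{\varepsilon})\,\mathbb{K}_{\varepsilon}\nabla P_{\varepsilon}$ and expand $\lambda(s_{0}-W_{\varepsilon})$ by the mean value theorem, producing the analogue of identity \eqref{AAA}. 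The explicit residual from the ansatz then splits into a longitudinal piece $-\varepsilon^{2-\beta}\lambda(s_{0})\,k_{1}\,\partial_{x_{1}}u_{2-\beta}$ and a transverse piece $-\varepsilon\,\lambda(s_{0})\,\mathbf{K}\,\nabla_{\overline{\xi}_{1}}u_{2-\beta}|_{\overline{\xi}_{1}=\overline{x}_{1}/\varepsilon}$, the latter arising from the cancellation $\varepsilon^{\beta}\cdot\varepsilon^{2-\beta}\cdot\varepsilon^{-1}=\varepsilon$ between the permeability factor and the rescaled transverse gradient.

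Next I would estimate the three remaining groups of terms, namely $W_{\varepsilon}\,\lambda'(\Theta^{(1)}_{\varepsilon})\,\mathbb{K}_{\varepsilon}\nabla\mathfrak{P}_{\varepsilon}$, $\lambda(s_{0})\,\mathbb{K}_{\varepsilon}\nabla U_{\varepsilon}$, and $\lambda'(\Theta^{(1)}_{\varepsilon})\,W_{\varepsilon}\,\mathbb{K}_{\varepsilon}\nabla U_{\varepsilon}$, in $L^{\infty}_{t}L^{2}_{x}$ by invoking Theorem~\ref{Th_1+}. The longitudinal contributions are controlled by $\|W_{\varepsilon}\|_{L^{2}}$ and $\|\partial_{x_{1}}U_{\varepsilon}\|_{L^{2}}$, which are both of order $\varepsilon^{1-\beta/2}\sqrt{\upharpoonleft\Omega_{\varepsilon}\upharpoonright_{3}}$, while the transverse ones carry an extra factor $\varepsilon^{\beta}$ that combines with $\|\nabla_{\overline{x}_{1}}U_{\varepsilon}\|_{L^{2}}$ (via $\varepsilon^{\beta}\|\nabla_{\overline{x}_{1}}U_{\varepsilon}\|_{L^{2}}^{2}\le C\varepsilon^{2-\beta}\upharpoonleft\Omega_{\varepsilon}\upharpoonright_{3}$) to yield an $\mathcal{O}(\varepsilon\sqrt{\upharpoonleft\Omega_{\varepsilon}\upharpoonright_{3}})$ bound. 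Comparing $\varepsilon^{2-\beta}$, $\varepsilon^{1-\beta/2}$ and $\varepsilon$, the worst rate is $\varepsilon^{1-\beta/2}$ when $\beta>0$ and $\varepsilon$ when $\beta<0$, which together give the exponent $\aleph_{\beta}=\min\{1-\beta/2,\,1\}$.

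For the water-phase velocity, I would apply the same substitution to the representation \eqref{total3}, obtaining a decomposition analogous to \eqref{rep-2}. All new terms are of the same three types as above, with $\Lambda$, $\Lambda'$, $b$ and $b'$ replacing the $\lambda$-factors; they involve either a gradient of $W_{\varepsilon}$, which is bounded via \eqref{app-estimateW-ep2-be}, or a factor $W_{\varepsilon}$ with a bounded smooth coefficient. The control of $\overrightarrow{V}_{\varepsilon}-\overrightarrow{\mathcal{V}}_{\varepsilon}$ from the previous step enters through the term $b(s_{0})(\overrightarrow{V}_{\varepsilon}-\overrightarrow{\mathcal{V}}_{\varepsilon})$ and inherits the same rate $\varepsilon^{\aleph_{\beta}}$, giving \eqref{app-estimate7}.

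Finally, for \eqref{app-estimateU-w-be} I would start from the algebraic identity $U_{w,\varepsilon}=-U_{\varepsilon}-\int_{s_{0}}^{S_{\varepsilon}}f(\eta)\,d\eta$ with $f=\lambda_{o}\,p_{c}'/\lambda$, which follows directly from \eqref{mean Pressure} and \eqref{app-pressure-w}, and differentiate as in \eqref{inequa2}--\eqref{inequa3}. Using the boundedness of $f$ and $f'$ on $[\delta_{0},\delta_{1}]$, this reduces $\partial_{x_{1}}U_{w,\varepsilon}$ to a linear combination of $\partial_{x_{1}}U_{\varepsilon}$, $\partial_{x_{1}}W_{\varepsilon}$ and $W_{\varepsilon}$, and $\nabla_{\overline{x}_{1}}U_{w,\varepsilon}$ to a combination of $\nabla_{\overline{x}_{1}}U_{\varepsilon}$ and $\nabla_{\overline{x}_{1}}W_{\varepsilon}$; assembling the bounds from Theorem~\ref{Th_1+} with the appropriate $\varepsilon^{\beta}$-weight on the transverse pieces yields the desired $\varepsilon^{1-\beta/2}$ rate. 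The main technical obstacle throughout is tracking the exact way in which the permeability scaling $\varepsilon^{\beta}$ interacts with the anisotropic error estimates --- it enhances or degrades each contribution in a different way --- but once this bookkeeping is done the argument is a routine consequence of Theorem~\ref{Th_1+}.
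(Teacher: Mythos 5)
Your proposal is correct and follows essentially the same route as the paper's proof: the same velocity decomposition (the analogue of \eqref{AAA}, i.e.\ \eqref{rep-3}) with the explicit residual terms of orders $\varepsilon^{2-\beta}$ and $\varepsilon$, the same key anisotropic bound $\|\mathbb{K}_\varepsilon\nabla U_\varepsilon\|_{L^2}\le C(\|\partial_{x_1}U_\varepsilon\|_{L^2}+\varepsilon^{\beta}\|\nabla_{\overline{x}_1}U_\varepsilon\|_{L^2})=\mathcal{O}\big((\varepsilon^{1-\beta/2}+\varepsilon)\sqrt{\upharpoonleft\Omega_\varepsilon\upharpoonright_3}\big)$ via Theorem~\ref{Th_1+}, and the same reduction of $U_{w,\varepsilon}$ through \eqref{diff-presure-w} and the differentiation formulas \eqref{inequa2}--\eqref{inequa3}. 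The only differences are cosmetic (order of the two estimates and spelling out the exponent bookkeeping), so nothing further is needed.
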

\begin{proof}
First, note that from \eqref{diff-presure-w} it follows that $U_{w,\varepsilon}\big|_{x_1=0}=U_{w,\varepsilon}\big|_{x_1=\ell}= 0.$ Then, similarly to the second part of the proof of Lemma~\ref{lemma-6-1}, but now using  estimates \eqref{app-estimate-U-e-be} - \eqref{app-estimateW-ep2-be}, we derive \eqref{app-estimateU-w-be}.

Next, as in the first part of the proof of Lemma~\ref{lemma-6-1}, we prove
\begin{align}\label{rep-3}
  \overrightarrow{{V}_\varepsilon} &= \overrightarrow{\mathcal{V}_\varepsilon} - \varepsilon^{2-\beta} \big(\lambda(s_0) \, k_1 \, \partial_{x_1}u_{2-\beta}, \, 0,\, 0\big)^\top - \varepsilon \big(0, 0, \lambda(s_0) \, \mathbf{K} \nabla_{\overline{\xi}_1}u_{2-\beta}\big)^\top
      \\
 & \ \ \ +  \lambda(s_0) \, \mathbb{K}_\varepsilon  \nabla  U_\varepsilon +  \lambda'(\Theta^{(1)}_\varepsilon)\, W_\varepsilon \, \mathbb{K}_\varepsilon  \nabla\mathfrak{P}_\varepsilon -
 \lambda'(\Theta^{(1)}_\varepsilon)\, W_\varepsilon \, \mathbb{K}_\varepsilon  \nabla U_\varepsilon. \notag
 \end{align}
 New in this proof is estimating the norm
\begin{multline}\label{estim-norm-be}
  \tfrac{1}{\sqrt{\upharpoonleft  \Omega_\varepsilon \! \upharpoonright_3}}\,  \max_{t\in \times [0,T]}
{\| \mathbb{K}_\varepsilon  \nabla  U_\varepsilon\|}_{(L^2(\Omega_\varepsilon))^3}
\\
\le
\tfrac{C_1}{\sqrt{\upharpoonleft  \Omega_\varepsilon \! \upharpoonright_3}}\,  \max_{t\in \times [0,T]} \Big(
{\| \partial_{x_1} U_\varepsilon\|}_{L^2(\Omega_\varepsilon)} + {\| \varepsilon^\beta \nabla_{\overline{x}_1}U_\varepsilon\|}_{(L^2(\Omega_\varepsilon))^2}\Big)
 \stackrel{\eqref{app-estimate-U-e-be}}{\le} C_2 \big(\varepsilon^{1 - \frac{\beta}{2}} + \varepsilon\big).
\end{multline}
Then, using  \eqref{app-estimate-U-e-be} - \eqref{app-estimateW-ep2-be},  we get \eqref{app-estimate7}.
\end{proof}

\smallskip

$\blacklozenge$   If  $\alpha > \beta -1, \,\alpha >1 $ and $\beta \neq 0,$  the approximation for the pressure ${P}_{w, \varepsilon}$ is determined by formula~\eqref{app-pressure-w-al}, but  $\mathfrak{P}_{\varepsilon}$ and $\mathfrak{S}_{\varepsilon}$ are now determined in \eqref{Anz-P-al-be} and \eqref{Anz-S-al-be}, respectively. The estimate for the difference $U_{w,\varepsilon} :=P_{w,\varepsilon} - \mathfrak{P}_{w,\varepsilon}$  is derived in the same way as in Lemmas~\ref{lemma-6-3} and \ref{lemma-6-1}, but it is necessary to use estimates~\eqref{app-estimate-U-e-al-be} - \eqref{app-estimateW-ep2-al-be}.

In comparison to the other cases, the order of the estimates is the smallest in this case. As follows from the previous case, the order
of estimates for the flow velocities will become even smaller, indicating a complex influence of the parameters $\alpha$ and $\beta$ on the course of the process. We now propose the following approximations:
\begin{equation}\label{app-velocities-al-be}
  \overrightarrow{\mathcal{V}_\varepsilon} := -  \big(\lambda(s_0) \, k_1 \, \partial_{x_1}p_0 , \, 0, \, 0\big)^\top,\quad
 \vec{\mathcal{V}}_{w,\varepsilon}:= -\big( \Lambda(s_0) \, k_1 \, \partial_{x_1}s_0, \, 0, \,  0 \big)^\top + \, b(s_0) \, \overrightarrow{\mathcal{V}_\varepsilon},
\end{equation}
where $(p_0, s_0)$ is a  solution to problem \eqref{limit_prob-p0-s0-el}.

Proceeding in the same way as before and using estimates~\eqref{app-estimate-U-e-al-be} - \eqref{app-estimateW-ep2-al-be}, we derive
\begin{equation}\label{deduce1}
  \overrightarrow{{V}_\varepsilon} = \overrightarrow{\mathcal{V}_\varepsilon} + \mathcal{O}(\varepsilon^{\alpha-1}) +
\mathcal{O}(\varepsilon^{\aleph_{\alpha,\beta}}) + \mathcal{O}(\varepsilon^{\frac{\beta}{2} + \aleph_{\alpha,\beta}}) \quad \text{as} \ \ \varepsilon \to 0,
\end{equation}
where $\aleph_{\alpha,\beta} = \min\{ \alpha-1 , \, \frac{\alpha-\beta +1}{2}\}.$ So, if $\alpha \ge 3 - \beta$ (this means $\aleph_{\alpha,\beta} = \frac{\alpha-\beta +1}{2}),$ then
\begin{equation*}
  \overrightarrow{{V}_\varepsilon} = \overrightarrow{\mathcal{V}_\varepsilon} +  \left\{
                                                                                   \begin{array}{ll}
                                                                                    \mathcal{O}\big(\varepsilon^{\frac{\alpha-\beta +1}{2}}\big) , & \hbox{when} \ \beta > 0; \\
                                                                                     \mathcal{O}\big(\varepsilon^{\frac{\alpha+1}{2}}\big), & \hbox{when} \ \beta< 0.
                                                                                   \end{array}
                                                                                 \right.
\end{equation*}
If $\alpha < 3 - \beta,$ then $ \overrightarrow{{V}_\varepsilon} = \overrightarrow{\mathcal{V}_\varepsilon} +  \mathcal{O}\big(\varepsilon^{\alpha- 1}\big)$ for $\beta >0$ (due to the other restrictions for $\alpha$ and $\beta$ this means that $\beta \in (0, 2)),$ and $ \overrightarrow{{V}_\varepsilon} = \overrightarrow{\mathcal{V}_\varepsilon} +  \mathcal{O}\big(\varepsilon^{\alpha - 1+ \frac{\beta}{2}}\big)$ for $\beta < 0$ and $\alpha > -\frac{\beta}{2} +1.$ If $1< \alpha <  -\frac{\beta}{2} +1,$ additional terms of the asymptotics for $P_\varepsilon$ and $S_\varepsilon$ should be constructed (see Remark~\ref{remark-4-1}).
In reality, the parameter $\beta$ is positive because the absolute permeability coefficients for different materials are very small ($10^{-3}$ - $10^{-16} \, \mathrm{cm}^2$, see e.g. \cite[\S 5.5]{Bear-1972}). Therefore, for this case, we formulate the results for positive $\beta.$

\begin{lemma}[$\alpha > \beta -1, \,\alpha >1 $ and $\beta \neq 0$]\label{lemma-6-4}
 Suppose that the conditions of Theorem~\ref{Th_5-1} are satisfied. Then the following asymptotic estimates hold:
\begin{equation*}
\tfrac{1}{\sqrt{\upharpoonleft  \Omega_\varepsilon \! \upharpoonright_3}}\,
\sqrt{ \int_{0}^{T} \int_{\Omega_\varepsilon} |\partial_{x_1} U_{w,\varepsilon}|^2 \, dx dt
+ \varepsilon^\beta \int_{0}^{T} \int_{\Omega_\varepsilon} |\nabla_{\overline{x}_1}U_{w,\varepsilon}|^2 \, dx dt} \le
 \tilde{C}_1\, \varepsilon^{\aleph_{\alpha,\beta}},
\end{equation*}
where $U_{w,\varepsilon} = P_{w,\varepsilon} - \mathfrak{P}_{w,\varepsilon}$ and
$\aleph_{\alpha,\beta} = \min\{ \alpha-1 , \, \frac{\alpha-\beta +1}{2}\};$
  \begin{equation*}
\tfrac{1}{\sqrt{\upharpoonleft  \Omega_\varepsilon \! \upharpoonright_3}}\,  \max_{t\in \times [0,T]}
{\| \overrightarrow{{V}_\varepsilon}(\cdot,t) - \overrightarrow{\mathcal{V}_\varepsilon}(\cdot,t) \|}_{(L^2(\Omega_\varepsilon))^3}
+ \tfrac{1}{\sqrt{\upharpoonleft  \Omega_\varepsilon \! \upharpoonright_3}}\,
{\| \vec{V}_{w,\varepsilon} - \vec{\mathcal{V}}_{w,\varepsilon} \|}_{(L^2(\Omega^T_\varepsilon)^3}
 \le
 \tilde{C}_2\,  \varepsilon^{\frac{\alpha-\beta +1}{2}}
\end{equation*}
for $\alpha \ge 3 - \beta$ and $\beta>0;$ and
  \begin{equation*}
\tfrac{1}{\sqrt{\upharpoonleft  \Omega_\varepsilon \! \upharpoonright_3}}\,  \max_{t\in \times [0,T]}
{\| \overrightarrow{{V}_\varepsilon}(\cdot,t) - \overrightarrow{\mathcal{V}_\varepsilon}(\cdot,t) \|}_{(L^2(\Omega_\varepsilon))^3}
+ \tfrac{1}{\sqrt{\upharpoonleft  \Omega_\varepsilon \! \upharpoonright_3}}\,
{\| \vec{V}_{w,\varepsilon} - \vec{\mathcal{V}}_{w,\varepsilon} \|}_{(L^2(\Omega^T_\varepsilon)^3}
 \le
 \tilde{C}_3\,  \varepsilon^{\alpha-1}
\end{equation*}
for $\alpha <  3 - \beta$ and $\beta \in (0, 2).$
\end{lemma}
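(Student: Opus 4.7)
The plan is to mirror the strategy of Lemmas~\ref{lemma-6-1} and \ref{lemma-6-3}, adapting the bookkeeping to the anisotropic scaling $\varepsilon^\beta$ in the transverse block of $\mathbb{K}_\varepsilon$ and to the fact that here the approximation $\mathfrak{P}_\varepsilon$ in \eqref{Anz-P-al-be} already carries an $\varepsilon^{\alpha-1}$ correction while $\overrightarrow{\mathcal{V}_\varepsilon}$ in \eqref{app-velocities-al-be} retains only the $\mathcal{O}(1)$ term. First, I would estimate $U_{w,\varepsilon}$. From the definition \eqref{app-pressure-w-al} together with \eqref{mean Pressure},
$$
U_{w,\varepsilon} \;=\; -\,U_\varepsilon \;+\; \int_{S_\varepsilon}^{\mathfrak{S}_\varepsilon} f(\eta)\, d\eta, \qquad f(\eta) := \frac{\lambda_o(\eta)}{\lambda(\eta)}\,p_c^\prime(\eta),
$$
with $f$ and $f'$ bounded by \eqref{Leveret}--\eqref{total mobillity}. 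Differentiating in $x_1$ and $\overline{x}_1$ and using $\nabla_{\overline{x}_1}\mathfrak{S}_\varepsilon\equiv 0$ yields decompositions analogous to \eqref{inequa2}--\eqref{inequa3} expressing $\nabla U_{w,\varepsilon}$ in terms of $\nabla U_\varepsilon$, $\nabla W_\varepsilon$ and $W_\varepsilon$ multiplied by smooth bounded functions. Weighting the transverse gradient by $\varepsilon^{\beta/2}$ and invoking the three estimates \eqref{app-estimate-U-e-al-be}--\eqref{app-estimateW-ep2-al-be} of Theorem~\ref{Th_5-1} together with the uniform boundedness of $\partial_{x_1}\mathfrak{S}_\varepsilon$, one reads off the first asserted estimate with rate $\varepsilon^{\aleph_{\alpha,\beta}}$.

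Second, for the velocity differences I would use an identity derived exactly as \eqref{AAA}, namely
$$
\overrightarrow{V_\varepsilon} + \lambda(\mathfrak{S}_\varepsilon)\mathbb{K}_\varepsilon\nabla\mathfrak{P}_\varepsilon = \lambda(\mathfrak{S}_\varepsilon)\mathbb{K}_\varepsilon\nabla U_\varepsilon + \lambda'(\Theta^{(1)}_\varepsilon)\,W_\varepsilon\,\mathbb{K}_\varepsilon\nabla\mathfrak{P}_\varepsilon - \lambda'(\Theta^{(1)}_\varepsilon)\,W_\varepsilon\,\mathbb{K}_\varepsilon\nabla U_\varepsilon.
$$
A direct expansion of $\lambda(\mathfrak{S}_\varepsilon)\mathbb{K}_\varepsilon\nabla\mathfrak{P}_\varepsilon$, using the explicit form \eqref{Anz-P-al-be} and the fact that $\nabla_{\overline{x}_1}$ produces a factor $\varepsilon^{-1}$ when acting on $u_{\alpha-\beta+1}(x_1,\tfrac{\overline{x}_1}{\varepsilon},t)$, shows that its difference from $\overrightarrow{\mathcal{V}_\varepsilon}$ is pointwise $\mathcal{O}(\varepsilon^{\alpha-1})$ in the longitudinal component and $\mathcal{O}(\varepsilon^{\alpha})$ in the transverse components. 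The term $W_\varepsilon\,\mathbb{K}_\varepsilon\nabla\mathfrak{P}_\varepsilon$ is $\mathcal{O}(\varepsilon^{\aleph_{\alpha,\beta}})$ by \eqref{app-estimate-W-ep-al-be}; the longitudinal part $k_1\,\partial_{x_1}U_\varepsilon$ is $\mathcal{O}(\varepsilon^{\aleph_{\alpha,\beta}})$ and the transverse part $\varepsilon^\beta\,\mathbf{K}\nabla_{\overline{x}_1}U_\varepsilon$ is $\mathcal{O}(\varepsilon^{\beta/2+\aleph_{\alpha,\beta}})$, both by \eqref{app-estimate-U-e-al-be}. Collecting these yields precisely the three-term expansion \eqref{deduce1}, after which I take the minimum of the three orders in the stated regimes.

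Third, the two velocity estimates in the lemma follow by case analysis on the defining quantity $\aleph_{\alpha,\beta} = \min\{\alpha-1,\,(\alpha-\beta+1)/2\}$. When $\alpha\ge 3-\beta$ and $\beta>0$, one has $\aleph_{\alpha,\beta}=(\alpha-\beta+1)/2\le\alpha-1$, and $\beta/2>0$, so the controlling order is $(\alpha-\beta+1)/2$; when $1<\alpha<3-\beta$ and $\beta\in(0,2)$, one has $\aleph_{\alpha,\beta}=\alpha-1$ and $\beta/2+\alpha-1>\alpha-1$, so the controlling order is $\alpha-1$. The estimate for $\vec{V}_{w,\varepsilon}-\vec{\mathcal{V}}_{w,\varepsilon}$ is treated through the analogue of \eqref{rep-2}: the extra diffusive block $-\Lambda(\mathfrak{S}_\varepsilon)\mathbb{K}_\varepsilon\nabla W_\varepsilon$ is controlled in $L^2(\Omega^T_\varepsilon)$ (with the $\varepsilon^\beta$ weight transversely) directly by \eqref{app-estimateW-ep2-al-be}, while the convective part $b(\mathfrak{S}_\varepsilon)\,\overrightarrow{\mathcal{V}_\varepsilon} + (b(S_\varepsilon)-b(\mathfrak{S}_\varepsilon))\overrightarrow{V_\varepsilon}$ is absorbed using the total-velocity estimate just derived together with \eqref{app-estimate-W-ep-al-be}.

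The main obstacle is the careful tracking of the three competing orders $\varepsilon^{\alpha-1}$, $\varepsilon^{\aleph_{\alpha,\beta}}$ and $\varepsilon^{\beta/2+\aleph_{\alpha,\beta}}$ in \eqref{deduce1}, and the explicit verification of the threshold $\alpha = 3-\beta$ separating the two regimes. The restriction $\beta>0$ in the velocity estimates is essential: for $\beta<0$ the factor $\varepsilon^{\beta/2}$ arising from the transverse block of $\mathbb{K}_\varepsilon$ blows up, so the transverse contribution to $\mathbb{K}_\varepsilon\nabla U_\varepsilon$ cannot be controlled without constructing additional terms in the ansatzes \eqref{Anz-P-al-be}--\eqref{Anz-S-al-be}, exactly as anticipated by the discussion preceding the lemma.
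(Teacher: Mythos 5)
Your proposal follows essentially the same route as the paper: the bound on $U_{w,\varepsilon}$ is obtained exactly as in Lemmas~\ref{lemma-6-1} and \ref{lemma-6-3} but now invoking the estimates of Theorem~\ref{Th_5-1}, and the velocity bounds come from the same three-term expansion \eqref{deduce1} with the same case analysis at the threshold $\alpha=3-\beta$. The only slight overstatement is your closing remark on $\beta<0$: there the transverse contribution is still controlled, of order $\mathcal{O}\big(\varepsilon^{\frac{\beta}{2}+\aleph_{\alpha,\beta}}\big)$, and additional terms in the ansatzes are needed only when $1<\alpha<1-\frac{\beta}{2}$, which is why the paper simply restricts the velocity estimates to $\beta>0$.
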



\section{Conclusions}\label{Sect: Concl}

{\bf 1.} In this work, we have examined the nonlinear Muskat-Leverett two-phase flow model in a thin cylinder~$\Omega_\varepsilon.$ The problem involves two additional parameters, $\alpha$ and $\beta$. The parameter $\alpha$ affects the flow rates of the mixture on the lateral surface $\Gamma_\varepsilon,$ while $\beta$ affects the absolute permeability tensor in the transverse directions.

The results demonstrate that for a thin cylinder with a cross-sectional diameter of order $\varepsilon,$ the value $\alpha = 1$ represents a critical threshold. This is observed in the first term of the asymptotics, and problem~\eqref{limit_prob} for $\beta < 2$ represents the corresponding one-dimensional Muskat-Leverett two-phase filtration model, taking into account the specified lateral surface flows from the original problem.
It can be reduced to the nonlinear parabolic equation \eqref{limit_parab-eq}.

For $\alpha >1$ and $\beta < 2,$  or $\alpha > \beta - 1$ and  $\beta \ge 2,$ the order of the specified flow rates of the mixture on the lateral surface $\Gamma_\varepsilon$ is relatively low. Consequently, its influence is observed only in the second terms of the asymptotics, namely $p_{\alpha-1}$ and $s_{\alpha-1},$ which form a solution to problem \eqref{limit_prob-al-be}. In these cases, the limit problem is problem \eqref{limit_prob-p0-s0-el}, and it is recommended that both this problem and the aforementioned problem to be considered in the context of potential applications.

Additionally, our results illustrate the correlation between these parameters: $\alpha \ge 1$ and $\beta < 2,$  or $\alpha > \beta - 1$ and  $\beta \ge 2.$ With regard to the parameter $\beta,$ the critical value is $2.$ From \eqref{Anz-P-al-be} and \eqref{Anz-S-al-be} we see that
the impact of a specific two-phase flow at the lateral boundary of the cylinder is particularly pronounced if $\alpha < 1$ and $\beta < 2,$
indicating that it may lead to substantial alterations in the overall process.
In this specific instance, the term $\varepsilon^{\alpha  -1} s_{\alpha  -1}$ of the asymptotics would become unbounded as $\varepsilon \to 0.$ However, this is not consistent with the physical interpretation of the model, which requires that the saturation remain within $[0,1].$
This indicates that there is a high level of conductivity through the lateral surface of the cylinder;  thus, this regime of fracture-rock matrix permeability relationships must be modeled using a different approach. The same for $\alpha < \beta - 1$ and  $\beta \ge 2.$

In the specific case $\alpha = \beta - 1$ and  $\beta \ge 2,$ ansatz \eqref{Anz-P-al-be} suggests
the advent of what is known as the "dual-porosity" regime in the thin cylinder $\Omega_\varepsilon;$
in our particular case, this signifies that the longitudinal two-phase flow exhibits a rate that is equivalent to that of the transverse flow.

Knowledge of flow velocities is critical in many applications,  such as the oil and gas industry, where fractured reservoirs are common, or environmental engineering, where fractured media often play a critical role in the transport of contaminants. It enables informed and optimised decision making to ensure the safety and sustainability of fluid management.
Therefore, the accurate velocity approximations obtained for the velocities $\vec{V}_{w,\varepsilon}$ and $\vec{V}_{o,\varepsilon}$ in Section~\ref{Sect-6} are important results that can help to more effectively predict two-phase fluid motion.

\vspace{6pt}

{\bf 2.} This study is based on a number of assumptions, which we would like to comment on in more detail. The assumptions regarding the smoothness of the given functions $q_0,$ $q_\ell,$ $S_0,$ $Q$ and the coefficients of problem $(\mathbb{P}_\varepsilon\mathbb{S}_\varepsilon\!)$  is necessary to determine the terms of the asymptotic approximations and to estimate certain integrals in the proofs.
The assumption that the function $Q$ vanishes in small neighbourhoods of the bases of the the thin cylinder corresponds to many real situations to a greater extent. Indeed, the penetration of the mixture from a solid rock into a crack is a local phenomenon, which indicates that  $Q$ has a compact support. Furthermore, from a mathematical standpoint, this assumption enables the avoidance of constructing  boundary layers near the bases of the thin cylinder to satisfy the boundary conditions there.

The main assumptions of our work are the regularity of capillary pressure and the neglect of acceleration of gravity in Darcy's law. They ensure the maximum principle (see \eqref{saturation1}), which in turn guarantees the regularity of the problem in the sense of inequality \eqref{regular} and the possibility of proving  asymptotic estimates for solutions. It should be noted that a number of studies directly assume that the problem being studied is regular (see, e.g., \cite{Bou-Luck-Mik-1996,Kro-Luckhaus-1984,Kruz-Suk-1977}).

Therefore, it would be intriguing to obtain findings without making such assumptions.
Investigating the dual-porosity regime, particularly for the case  $\alpha = 1$ and $\beta = 2$, would also be an interesting and worthwhile endeavor. Nevertheless, our future paper will focus on a nonlinear Muskat-Leverett two-phase filtration problem in thin graph-like networks that more accurately model fracture geometry. The main challenge will be deriving the limit problem on a graph with appropriate Kirchhoff transmission conditions at the vertices.

\subsection*{Acknowledgment}
The authors  thank  for  funding by the  Deutsche Forschungsgemeinschaft (DFG, German Research Foundation) – Project Number 327154368 -- SFB 1313.

\bibliographystyle{siamplain}
\bibliography{references-Melnyk-Rohde}
\end{document}